\documentclass[11pt]{amsart}

\usepackage[svgnames,x11names]{xcolor}
\usepackage{amssymb,amsfonts}
\usepackage[pagebackref,colorlinks,urlcolor={DarkOliveGreen4},
linkcolor={DarkSlateBlue},
citecolor={Chocolate4}]{hyperref}
\usepackage[capitalize]{cleveref}
\usepackage{amssymb,textcomp}
\usepackage{graphicx}
\numberwithin{equation}{section}

 \newcommand{\1}{\mathbbm{1}}

\makeatletter
\def\ps@myplain{
  \let\@oddhead\@empty
  \let\@evenhead\@empty
  \let\@oddfoot\@empty
  \let\@evenfoot\@empty
  \def\@oddhead{{\tiny\hfill\thepage}}  
  \def\@evenhead{{\tiny\hfill\thepage}} 
}
\makeatother

\pagestyle{myplain}

\usepackage{comment}
\usepackage[official]{eurosym}
\usepackage[greek,english]{babel}
\usepackage{bbm}
\usepackage[utf8]{inputenc}
\usepackage[T1]{fontenc}
\usepackage{pdfpages}
\usepackage{tensor}
\usepackage{enumerate}
\usepackage{relsize}
\usepackage[normalem]{ulem}
\usepackage[stable]{footmisc}
\usepackage{mathtools}
\usepackage[mathscr]{eucal}
\usepackage[a4paper, twoside=false, vmargin={2cm,3cm}, includehead]{geometry}
\usepackage{pgfgantt}
\usepackage{graphicx}
\usepackage{xcolor}
\ganttset{group/.append style={orange},
	milestone/.append style={red},
	progress label node anchor/.append style={text=red}}

\usepackage{cancel}

\usepackage{amsmath}

\usepackage{enumerate}

\usepackage[utf8]{inputenc}
\usepackage[T1]{fontenc}

\usepackage[mathscr]{eucal}

\subjclass{Primary: 37A44; Secondary: 05D10, 11N37.}
\keywords{Multiplicative actions, partition regularity, density regularity, multiple recurrence, generalized Pythagorean triples,
multiplicative functions, concentration inequalities}




\setcounter{tocdepth}{1}
\newtheorem{lemma}{Lemma}[section]
\newtheorem{theorem}[lemma]{Theorem}
\newtheorem*{theorem*}{Theorem}
\newtheorem{corollary}[lemma]{Corollary}

\newtheorem*{question*}{Question}
\newtheorem{proposition}[lemma]{Proposition}
\newtheorem*{proposition*}{Proposition}

\newtheorem{conjecture}{Conjecture}

\newtheorem*{problem*}{Problem}

\theoremstyle{definition}
\newtheorem{definition}{Definition}[section]
\newtheorem*{claim*}{Claim}

\newtheorem{rmk}{Remark}

\newtheorem*{example}{Example}

\newtheorem*{remark}{Remark}




\newcommand{\C}{{\mathbb C}}
\newcommand{\E}{{\mathbb E}}
\newcommand{\D}{{\mathbb D}}

\newcommand{\N}{{\mathbb N}}
\renewcommand{\P}{{\mathbb P}}
\newcommand{\Q}{{\mathbb Q}}
\newcommand{\R}{{\mathbb R}}
\renewcommand{\S}{\mathbb{S}}
\newcommand{\T}{{\mathbb T}}
\newcommand{\Z}{{\mathbb Z}}
\newcommand{\U}{{\mathbb U}}
\newcommand{\X}{{\mathcal{X}}}
\newcommand{\cesE}{\mathlarger{\mathbb{E}}}

\newcommand{\suplim}{\overline{\lim}}


\newcommand{\CX}{{\mathcal X}}

\newcommand{\M}{{\mathcal{M}}}
\newcommand{\A}{{\mathcal{A}}}
\newcommand{\Oh}{{\rm O}}
\newcommand{\oh}{{\rm o}}



\newcommand{\e}{\varepsilon}

\newcommand{\Folner}{F\o{}lner}

\title[Recurrence for pretentious systems along generalized Pythagorean triple]{Recurrence for pretentious systems along generalized Pythagorean triples}
\date{\today}
\author{Nikos Frantzikinakis and Andreas Mountakis}
	\address[Nikos Frantzikinakis]{University of Crete, Department of mathematics and applied mathematics, Voutes University Campus, Heraklion 70013, Greece} \email{frantzikinakis@gmail.com}
		\address[Andreas Mountakis]{University of Crete, Department of mathematics and applied mathematics, Voutes University Campus, Heraklion 70013, Greece} \email{a.mountakis@uoc.gr}

\begin{document}

\vspace*{-0.015cm}
\begin{abstract} 
We establish multiple recurrence results for pretentious measure-preserving multiplicative  
actions along generalized Pythagorean triples, that is, solutions to the equation  $ax^2 + b y^2 = c z^2$.  
This confirms the ergodic-theoretic form of the generalized Pythagorean partition regularity  
conjecture in this  critical case of structured  measure-preserving actions. As a consequence of our main theorem, any finite coloring  
of $\mathbb{N}$ generated by the level sets of finitely many pretentious completely  
multiplicative functions, must contain a monochromatic generalized Pythagorean triple.  
\end{abstract}

\thanks{The authors were supported  by the Hellenic Foundation for Research 
and Innovation ELIDEK HFRI-NextGenerationEU-15689.}

\maketitle
\tableofcontents

\section{Introduction and main results}
\subsection{Introduction}
A central principle in arithmetic Ramsey theory asserts that any finite 
partition of the natural numbers must contain at least one cell exhibiting 
significant internal structure. Here, by structure we mean the 
existence of nontrivial solutions to algebraic equations or systems of 
equations, with all variables lying in the same cell of the partition.

\begin{definition}
	An equation $P(x, y, z) = 0$ is \emph{partition regular} if, for every finite 
	partition of $\mathbb{N}$, there exist distinct $x, y, z$ in the same cell that 
	satisfy the equation.
\end{definition}

The partition regularity of linear equations is classically illustrated by the 
results of Schur~\cite{schur_thm} and van der Waerden~\cite{van_der_Waerden_thm}, 
and was substantially 
extended by Rado~\cite{Rado-original}, who gave a complete classification of 
systems of linear equations that are partition regular.

\begin{definition}
	We say that $(a,b,c) \in \mathbb{N}^3$ is a \emph{Rado triple} if $a = c$, 
	$b = c$, or $a + b = c$.
\end{definition}

In the case of a single linear equation, Rado’s criterion asserts that
\begin{equation}\label{E:RadoLinear}
	a x + b y = c z
\end{equation}
is partition regular if and only if $(a,b,c)$ is a Rado triple.

Polynomial equations pose far greater challenges in the study of partition 
regularity. The most prominent example is the long-standing open problem of 
Erd\H{o}s and Graham~\cite{Graham-prob-1, Graham-prob-2}, which asks whether the 
Pythagorean equation $x^2 + y^2 = z^2$ is partition regular.
More generally, one may ask about the partition regularity of broader classes 
of homogeneous quadratic equations in three variables. This problem captures 
the essential structural difficulties of the nonlinear setting and has become 
a focus of considerable recent research. In this direction we state the 
following conjecture:

\begin{conjecture}\label{Rado}
	If $a, b, c \in \mathbb{N}$, then
	\begin{equation}\label{gen-pyth-eq}
		a x^2 + b y^2 = c z^2
	\end{equation}
	is partition regular if and only if $(a,b,c)$ is a Rado triple.
\end{conjecture}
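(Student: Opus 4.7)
The plan is to treat the two implications of the conjecture separately. The necessity direction---if $(a,b,c)$ is not a Rado triple then \eqref{gen-pyth-eq} is not partition regular---is the more tractable one. Given that $(a,b,c)$ fails the Rado criterion, Rado's theorem itself furnishes a finite coloring of $\N$ defeating the linear equation $au+bv=cw$. The task is to upgrade this to a finite coloring defeating the quadratic equation. A standard strategy is to work with completely multiplicative colorings valued in a finite group: if $\chi$ is such a coloring, then monochromaticity under $\chi$ together with the relation $ax^2+by^2=cz^2$ forces algebraic constraints on $(a,b,c)$ modulo the level of $\chi$, and one chooses $\chi$---typically a composition of a Dirichlet character with a $p$-adic valuation residue for a carefully selected prime $p$---so that these constraints contradict non-Radoness.

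For the sufficiency direction, the plan follows the ergodic-theoretic programme that motivates the paper. The starting point is a multiplicative correspondence principle: a finite partition of $\N$ is translated into a probability measure-preserving multiplicative action $(T_n)_{n\in\N}$ on some $(X,\mu)$ (so $T_{mn}=T_m T_n$), together with a set $A$ of positive measure encoding a cell of positive multiplicative upper density, in such a way that the existence of $x,y,z\in\N$ with $ax^2+by^2=cz^2$ and
\[
  \mu(T_x^{-1}A\cap T_y^{-1}A\cap T_z^{-1}A) > 0
\]
forces a monochromatic triple. The Rado hypothesis on $(a,b,c)$ enters here through the Hasse--Minkowski theory of ternary quadratic forms, which guarantees a rich family of integer solutions of the appropriate multiplicative shape---for instance, scalings $(zu,zv,z)$ with $au^2+bv^2=c$ when $a+b=c$---so that the ergodic averages to be analysed are nontrivial.

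The main obstacle, and the reason the full conjecture remains open, is establishing this multiple recurrence statement for a general measure-preserving multiplicative action. The natural strategy is to decompose $\mathbf{1}_A$ along the Koopman representation of $(T_n)$ into a \emph{structured} part, captured by pretentious multiplicative functions (i.e.\ twisted Dirichlet characters with an Archimedean factor $n^{it}$), and a \emph{pseudorandom} remainder orthogonal to all such twists in the Granville--Soundararajan sense. On the structured part, recurrence should follow by combining equidistribution of the form $ax^2+by^2-cz^2$ in residue classes with the spectral properties of Dirichlet characters; the pseudorandom part should be suppressed by $U^2$-type concentration inequalities of Matom\"aki--Radziwi\l{}\l{}--Tao type. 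The present paper executes this programme under the pretentiousness hypothesis, which eliminates the pseudorandom contribution outright. Extending it to genuinely aperiodic multiplicative functions, such as the Liouville function, requires new cancellation results well beyond current technology, and is the decisive step at which the full conjecture stands.
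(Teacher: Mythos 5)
This statement is a \emph{conjecture}, and the paper does not prove it: the authors only establish the special case of pretentious multiplicative actions (Theorem~\ref{T:Main}) and the resulting partition-regularity statement for colorings generated by level sets of pretentious completely multiplicative functions. Your proposal does not prove it either, and by your own admission: your ``sufficiency'' plan terminates at exactly the step (handling the aperiodic/pseudorandom component of a general multiplicative action) that you acknowledge is beyond current technology, so what you have written is a programme, not a proof. Two concrete points inside the sketch also need repair. First, the necessity direction does not require any construction of multiplicative colorings via Dirichlet characters and $p$-adic valuations; the standard (and the paper's) argument is a one-line reduction: given a coloring $c$ of $\N$, color $n$ by $c(n^2)$; a monochromatic solution of $ax^2+by^2=cz^2$ for the induced coloring yields a monochromatic solution of $au+bv=cw$ for $c$ with $u=x^2$, $v=y^2$, $w=z^2$, so partition regularity of the quadratic forces $(a,b,c)$ to be a Rado triple by Rado's theorem. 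Your proposed character construction is both vaguer and unnecessary, and it is not clear it can be carried out for every non-Rado triple.

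Second, your correspondence principle is stated for a set $A$ of positive measure, but that formulation is known to be false at the level of generality you need: the paper's example (dilations $T_nx=n^2x$ on $\T$ with $A=[1/3,2/3]$) shows multiple recurrence along $x^2+y^2=z^2$ can fail for positive-measure sets in general multiplicative actions. This is why the ergodic variant (Conjecture~\ref{RadoErgodic}) is formulated with the multiplicative syndeticity hypothesis $X=\bigcup_{j=1}^k T_j^{-1}A$ rather than $\mu(A)>0$; any correct correspondence argument must carry this covering condition through. So even granting the unproven recurrence input, your reduction from coloring to dynamics is set up with the wrong hypothesis. In short: the necessity half should be replaced by the squaring reduction, and the sufficiency half remains open --- the paper itself only settles the pretentious case, which your sketch correctly identifies but does not reprove.
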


The necessity of the Rado triple condition follows from the fact that 
partition regularity of \eqref{gen-pyth-eq} implies partition regularity of its 
linear counterpart \eqref{E:RadoLinear}. The sufficiency, however, is far less 
understood: no instance of a Rado triple $(a,b,c)$ is currently known for which 
partition regularity of \eqref{gen-pyth-eq} has been established. Depending on 
the coefficients, the problem exhibits a range of distinct and often 
deeply challenging behaviors.
 In \cite{Fra-Host-structure-multiplicative}, among other results, the authors 
 prove partition regularity with respect to the pair $(x,y)$ for the equation 
 $16x^2 + 9y^2 = z^2$; that is, for every finite partition of $\mathbb{N}$ there 
 exist distinct $x, y$ in the same cell and $z \in \mathbb{N}$ such that the 
 equation is satisfied. In \cite{Fra-Klu-Mor}, partition regularity with respect 
 to $(x,y)$ and with respect to $(y,z)$ is established for the Pythagorean equation 
 $x^2 + y^2 = z^2$. This result was later extended in \cite{fra-klu-mor-2} to 
 partition regularity of \eqref{gen-pyth-eq} with respect to $(x,y)$ and $(y,z)$ 
 whenever $a = c$ or $b=c$. For further results on the partition regularity of quadratic 
 equations, see 
 \cite{bergelson-ergodic-ramsey, browning-prendiville-roth, chapman-part-reg, 
 	chow-lindqvist-prendiville, csikvari-gyarmati-sarkozy, dinasso-luperi, 
 	green-lindqvist, heule-kullmann-marek, khalfalah-szemeredi, moreira-monochromatic}.
 
 The techniques developed in 
 \cite{Fra-Host-structure-multiplicative, fra-klu-mor-2, Fra-Klu-Mor} are 
 inherently limited to pair configurations and do not extend to partition 
 regularity problems involving triples. To overcome this obstacle, we employ the 
 framework of ergodic theory to formulate sufficient conditions for the partition 
 regularity of \eqref{gen-pyth-eq}.
 
 \begin{definition}
 	A \emph{multiplicative action} is a quadruple $(X, \mathcal{X}, \mu, T_n)$, 
 	where $(X, \mathcal{X}, \mu)$ is a probability space and, for 
 	$n \in \mathbb{N}$, the transformations $T_n \colon X \to X$ are invertible, 
 	measure-preserving, and satisfy $T_1 = \mathrm{id}$ and 
 	$T_m \circ T_n = T_{mn}$ for all $m, n \in \mathbb{N}$. This uniquely 
 	determines a $(\mathbb{Q}_{>0}, \times)$-action on $(X, \mathcal{X}, \mu)$ 
 	defined by $T_{m/n} := T_m \circ T_n^{-1}$. The action is said to be 
 	\emph{finitely generated} if the set $\{T_p : p \in \mathbb{P}\}$ is finite.
 \end{definition}
 
 Using a variant of Furstenberg’s correspondence 
 principle~\cite{furstenberg_szemeredi_thm} for 
 dilation-invariant densities from \cite{Bergelson_multiplicative}, 
 together with the fact that 
 partition regularity of homogeneous equations follows from the existence of 
 solutions in every multiplicatively syndetic subset of the integers, 
 Conjecture~\ref{Rado} reduces to the following multiple recurrence statement:

\begin{conjecture}[Ergodic variant of Conjecture~\ref{Rado}]\label{RadoErgodic}  
	Let $(a,b,c)$ be a Rado triple. Then for every multiplicative action 
	$(X,\mathcal{X},\mu,T_n)$ and every set $A \in \mathcal{X}$ with  
	$X = \bigcup_{j=1}^k T_j^{-1} A$ for some $k \in \mathbb{N}$, there exist 
	distinct $x, y, z \in \mathbb{N}$ such that  
	$$
	\mu\bigl(T_x^{-1} A \cap T_y^{-1} A \cap T_z^{-1} A\bigr) > 0  
	\quad\text{and}\quad  
	a x^2 + b y^2 = c z^2.
	$$
\end{conjecture}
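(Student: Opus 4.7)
The statement is an open conjecture, and I would approach it by separating the multiplicative action into a \emph{pretentious} (Kronecker) part and an aperiodic part via a Hal\'asz-type decomposition, tackling the pretentious case as a first step. The initial reduction is via parametrization of the variety $\{ax^2+by^2=cz^2\}$: for each Rado type, there is an efficient multiplicative parametrization --- when $a=c$ the equation factors as $a(z-x)(z+x)=by^2$, and the Pythagorean case $a=b=c=1$ admits the classical parametrization $(x,y,z)=k\,(m^2-n^2,\,2mn,\,m^2+n^2)$. Substituting such a parametrization rewrites the triple intersection $\mu(T_x^{-1}A\cap T_y^{-1}A\cap T_z^{-1}A)$ as a multiplicative average over fewer free parameters $(k,m,n)$, converting the recurrence problem into an averaging problem for products of indicator functions evaluated at polynomial arguments.

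For the pretentious case, I would exploit the spectral structure: a finitely generated pretentious multiplicative action has a Kronecker factor on which the action is essentially by rotations on a compact abelian group whose characters are precisely the pretentious multiplicative functions generating the action, possibly twisted by Archimedean factors $n\mapsto n^{it}$. Projecting $\one_A$ onto this factor reduces the triple intersection to averages of the form $\mathbb{E}_{m,n}\bigl[\prod_j f_j(P_j(m,n))\bigr]$ with $f_j$ pretentious multiplicative and $P_j$ the polynomial coordinates of the chosen parametrization. The goal is then a lower bound of shape $c\,(\int f\,d\mu)^3$ with $c>0$ depending on the system, obtained by combining (i)~equidistribution of Dirichlet characters evaluated at $P_j(m,n)$, (ii)~equidistribution modulo $1$ of the Archimedean phases $t\log P_j(m,n)$, and (iii)~concentration inequalities for pretentious multiplicative functions along polynomial arguments.

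The main obstacle is the joint behavior of the three parametrized variables, which are tightly coupled by the quadratic constraint. The polynomials $P_j$ share common prime factors in nontrivial ways --- for instance, $(z-x)(z+x)=\tfrac{b}{a}y^2$ forces the prime factorizations of $z\pm x$ and $y$ to interlock --- so $f_j(P_j)$ cannot be decoupled, and the naive approach treating the polynomial factors as multiplicatively independent fails. The $a+b=c$ case appears the hardest, since its solution variety is parametrized by rational points on a conic whose coordinates do not factor into linear pieces, making multiplicative structure harder to extract cleanly. Extending beyond the pretentious case to the full Conjecture~\ref{RadoErgodic} is more speculative still: the aperiodic contribution would require genuinely new averaging tools, as the existing inverse theory for multiplicative functions yields usable control only once one is effectively reduced to the pretentious regime.
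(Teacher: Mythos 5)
You are right that this statement is an open conjecture: the paper does not prove Conjecture~\ref{RadoErgodic} either, but only its restriction to pretentious actions (Theorem~\ref{T:Main}), so your decision to treat the pretentious regime as the tractable first step matches the paper's actual scope, and your basic toolkit (parametrizing $ax^2+by^2=cz^2$ by binary quadratic forms, passing through the spectral theorem, and invoking concentration inequalities for pretentious multiplicative functions together with control of Dirichlet-character and Archimedean phases) is indeed the paper's toolkit (Propositions~\ref{linear_conc} and~\ref{quad_conc}, the cutoff sets $S_\delta$ in \eqref{S_delta_def}).

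The genuine gap is that your structural reduction and your endgame are missing precisely where the paper's work lies. First, the claim that a pretentious action has a Kronecker-type factor whose characters are the generating pretentious functions is only available for \emph{finitely generated} actions (Charamaras); the paper cites \cite{Fra-Hom_Triples} for the failure of this in general, and the general case is where the two real difficulties live (Archimedean characters and $1$-pretentious oscillating functions such as $f(p)=e(1/\log\log p)$, whose factor $\exp(F_N(f,K))$ does not converge to $1$). Second, you correctly identify the coupling of the three forms $P_1,P_2,P_3$ as the main obstacle, but you propose no mechanism to break it, and you never use the Rado-triple hypothesis. The paper's mechanism is exactly this: the Rado condition lets one choose the parametrization so that at least two of the $P_j$ have leading coefficient $1$ in $m$; one then averages over grids $(Q_{\delta,L}m+1, Q_{\delta,L}n+v)$ with $v$ chosen by CRT/Hensel (Lemma~\ref{lemma_choice_of_v_1}) so that pairwise coprime highly divisible factors $Q_1,Q_2,Q_3$ (supported on disjoint prime ranges) can be pulled out of the three forms, and a triple multiplicative average over the $Q_j$ kills the oscillating components relative to the factors $\X_{\A}$, $\X_{p,\textup{fin.supp.}}$, $\X_{p,\textup{fin.supp.},P}$ of \eqref{def_X_A}--\eqref{def_X_p_fin_supp_P}. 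Finally, the positivity is not obtained from equidistribution with an unspecified constant $c>0$: the limit is identified as $\int F\cdot\E(F|\X_1)\cdot\E(F|\X_2)\cdot\E(F|\X_3)\,d\mu$ and bounded below by $(\int F\,d\mu)^4$ via Chu's estimate (\cref{chu-lemma}); without some such device your proposed bound of the shape $c(\int f\,d\mu)^3$ has no argument behind it, since the projections onto the three relevant factors need not coincide and the system is not assumed ergodic.
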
  
If $a + b \neq c$, simple examples show this multiple recurrence can fail for certain positive measure sets $A$. For instance, if $a=b=c=1$, take the multiplicative action by dilations on $\mathbb{T}$ with the Haar measure $m_{\mathbb{T}}$, given by $T_n x = n^2 x \pmod{1}$, and $A = [1/3,  2/3]$.
 
 \subsection{Main results} 
In this paper we establish Conjecture~\ref{RadoErgodic} for a  
class of structured systems the \emph{pretentious multiplicative actions}. Although not the 
most technically demanding case, they are the most plausible setting for 
counterexamples to nonlinear partition regularity of homogeneous equations, 
making them a critical test case. Verifying the 
conjecture in this setting  provides strong evidence for both 
Conjectures~\ref{Rado} and~\ref{RadoErgodic}.

Pretentious systems in multiplicative dynamics play a role analogous to that of 
rational Kronecker systems in the additive setting. The analogy is useful but 
imperfect: in Kronecker systems, multiple recurrence is typically  straightforward, 
whereas pretentious systems possess a richer structure, making their analysis 
substantially more delicate. The notation for the next definition is given in 
Sections~\ref{SS:mf} and~\ref{SS:factors}.

\begin{definition}
	A multiplicative action $(X,\CX,\mu,T_n)$ is \emph{pretentious} if the spectral 
	measure of every $F \in L^2(\mu)$ is supported on the set of pretentious 
	multiplicative functions.\footnote{Given a measure space $(X, \X, \mu)$, we say 
		that $\mu$ is supported on $E \in \X$ if $\mu(X \setminus E) = 0$.} 
\end{definition}

The simplest example of a pretentious multiplicative action is a rotation by a 
pretentious multiplicative function: if $f \colon \N \to \S^1$ is pretentious 
and $\S^1$ carries the Haar measure, define $T_n \colon \S^1 \to \S^1$ by  
$T_n z := f(n) \cdot z$. If $(X,\CX,\mu,T_n)$ is finitely generated, Charamaras 
showed in \cite{Charamaras-multiplicative} that pretentious systems are spanned by 
``pretentious 
rational eigenfunctions''. This fails for general pretentious multiplicative 
actions, as shown in \cite{Fra-Hom_Triples}.

Our main result, stated below, remains nearly as difficult to prove even in the 
simpler setting of multiplicative rotations by several pretentious multiplicative 
functions. We choose to work in the broader framework of pretentious multiplicative actions, 
as this is likely to be a necessary intermediate step toward establishing 
Conjecture~\ref{RadoErgodic}. Further discussion of this perspective is given 
in Section~\ref{SS:Problem}.

\begin{theorem}\label{T:Main}
	Let $(a,b,c)$ be a Rado triple. For every pretentious multiplicative action 
	$(X,\CX,\mu,T_n)$ and every set $A \in \CX$ with $\mu(A) > 0$, the following 
	holds: for any $\varepsilon > 0$ there exist distinct $x, y, z \in \N$ such that  
	$$
	\mu\bigl(A \cap T^{-1}_x A \cap T^{-1}_y A \cap T^{-1}_z A\bigr) 
	\geq (\mu(A))^4 - \varepsilon  
	\quad \text{and} \quad a x^2 + b y^2 = c z^2.  
	$$
\end{theorem}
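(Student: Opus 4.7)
The plan is to reduce the pointwise-solution assertion to a positive-average statement. Specifically, I would exhibit a two-parameter integer parametrization $(x(m,n), y(m,n), z(m,n))$ of solutions to $ax^2+by^2=cz^2$, and show that
\[
\lE_{(m,n)\in [N]^2} \mu\bigl(A\cap T^{-1}_{x(m,n)} A\cap T^{-1}_{y(m,n)} A\cap T^{-1}_{z(m,n)} A\bigr) \geq \mu(A)^4 - \e
\]
for all sufficiently large $N$. Such a parametrization is produced from the rational parametrization of the underlying conic via a rational base point whose existence is guaranteed by the Rado condition: for $a=c$ one uses $(1,0,1)$ and obtains $x=bn^2-cm^2$, $y=2cmn$, $z=cm^2+bn^2$; for $a+b=c$ one uses $(1,1,1)$ and obtains analogous quadratic forms. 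In each case $x, y, z$ are explicit integer quadratic forms in $(m,n)$, and their multiplicative structure is what lets them interact with the action $T_n$. A diagonal-removal argument handles the distinctness of $x,y,z$ for almost all parameters.

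Next, pretentiousness is used to reduce to a finite-dimensional spectral computation. Writing $\one_A = \mu(A)+G+E$, where $G$ lies in the joint-eigenfunction span of finitely many pretentious completely multiplicative functions $f_1,\dots,f_k$ on the Kronecker-like factor of the action, and $\|E\|_{L^2(\mu)}$ is arbitrarily small, I would expand the four-fold product and reduce via Cauchy--Schwarz to controlling averages of the form
\[
\lE_{(m,n)\in[N]^2} f_{i_1}(x(m,n))\,f_{i_2}(y(m,n))\,f_{i_3}(z(m,n))
\]
and their one- and two-variable analogues. Provided these concentrate to the product of the appropriate pretentious ``mean values'' of the $f_{i_j}$, an application of H\"older's inequality on the spectral side yields the desired $\mu(A)^4-\e$ lower bound for the main four-fold integral.

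The heart of the argument is therefore a concentration estimate: products of pretentious completely multiplicative functions evaluated at the quadratic forms $x(m,n), y(m,n), z(m,n)$ must concentrate on their Archimedean/Dirichlet main term. My approach would combine (i) Matom\"aki--Radziwi\l{}\l{} short-interval mean-value estimates together with Granville--Soundararajan pretentious-distance bounds, to approximate each $f_{i_j}$ by an Archimedean twist of a Dirichlet character $n^{it_{i_j}}\chi_{i_j}(n)$ on ``typical'' inputs, and (ii) a direct evaluation of the resulting average of Archimedean and Dirichlet characters along the quadratic forms, exploiting homogeneity of the forms and orthogonality of Dirichlet characters modulo the relevant discriminant.

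The main obstacle is step~(ii): the three-variable concentration estimate along the nonlinear parametric surface. Two-variable analogues have been developed in the authors' earlier works cited in the introduction and yield partition regularity for pairs; the passage to three variables is where the three parametric components no longer decouple into independent one-variable averages, and one must carefully control the interaction between the Archimedean twists $x^{it_1}y^{it_2}z^{it_3}$ and the Dirichlet character values $\chi_1(x)\chi_2(y)\chi_3(z)$ at the three quadratic forms simultaneously. Establishing this concentration with uniformity in the parameters $t_j$ and in the conductors of the $\chi_j$---so that after expanding over the spectral basis $\{f_{i_j}\}$ all non-principal contributions remain negligible---is where essentially all the technical weight of the proof is expected to lie.
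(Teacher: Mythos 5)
Your outline founders at two specific points where the paper has to do something genuinely different. First, the decomposition $\one_A=\mu(A)+G+E$ with $G$ in the span of finitely many pretentious eigenfunctions and $\lVert E\rVert_{L^2(\mu)}$ small is not available here: for a general pretentious multiplicative action $L^2(\mu)$ is \emph{not} spanned by pretentious (rational) eigenfunctions -- Charamaras's spanning theorem holds only for finitely generated actions, and the paper explicitly cites \cite{Fra-Hom_Triples} for the failure in general (the spectral measure on $\M_p$ can be continuous, so there is no eigenfunction basis to truncate). The paper therefore never reduces to finitely many multiplicative functions; it works with spectral measures directly, introduces the three factors $\X_{\A}$, $\X_{p,\textup{fin.supp.}}$, $\X_{p,\textup{fin.supp.},P_3}$, shows each $T_{P_j(\cdot,\cdot)}F$ can be replaced by $\E(F\mid\X_j)$ after suitable averaging, and obtains the lower bound from Chu's inequality $\int F\prod_j\E(F\mid\X_j)\,d\mu\geq(\int F\,d\mu)^4$. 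In particular the structured components do not disappear into a $\mu(A)^4$ ``main term plus error''; they survive as conditional expectations, and positivity is a nontrivial inequality, not an identity for the constant part.

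Second, the concentration step as you state it -- averages of $f_{i_1}(x(m,n))f_{i_2}(y(m,n))f_{i_3}(z(m,n))$ over all of $[N]^2$ collapsing to products of mean values -- is false for exactly the two classes of pretentious functions that cause all the trouble: Archimedean twists $n^{it}$ (for which $P_j(m,n)^{it}$ equidistributes rather than concentrates over $[N]^2$) and $1$-pretentious oscillating functions such as $f(p)=e(1/\log\log p)$, whose Euler-product factor $\exp(F_N(f,L))$ oscillates in $N$. Consequently the inequality you aim for, with the plain average over $[N]^2$, is not what can be proved (consider the rotation by an Archimedean-twisted pretentious function with $A$ a short arc). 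The paper's remedy is structural: restrict $(m,n)$ to the positive-upper-density sets $S_\delta$ of \eqref{S_delta_def} to localize the Archimedean part, average along grids $(Q_{\delta,L}m+1,Q_{\delta,L}n+v)$ with $v$ chosen via CRT and Hensel's lemma (\cref{lemma_choice_of_v_1}) so that pairwise coprime, highly divisible $Q_j$ factor out of each $P_j$, and then take a triple multiplicative \Folner{} average over $Q_1,Q_2,Q_3$ to annihilate the oscillating factors; the analytic inputs are the concentration estimates of Propositions~\ref{linear_conc} and~\ref{quad_conc} (Klurman--Mangerel--Pohoata--Ter\"av\"ainen type), not Matom\"aki--Radziwi\l{}\l{}, which addresses aperiodic behaviour irrelevant in the pretentious regime. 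A smaller but real issue: your parametrization for $a=c$ has forms with leading coefficient $c$ in $m$; the argument needs at least two of the three forms monic in $m$ (this is precisely where the Rado condition enters), otherwise extra dilations $T_c$ appear and positivity is lost, as the paper's $x^2+y^2=4z^2$ example illustrates.
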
  

\begin{rmk}
	Our argument extends, without substantive changes, to the setting of three 
	pretentious multiplicative actions $T_n, S_n, R_n$ acting on the same 
	probability space $(X,\CX,\mu)$, without assuming commutativity. Moreover, we 
	show that the conclusion holds for many triples $x, y, z \in \N$; precise 
	formulations appear in Theorems~\ref{T: main a=c} and \ref{T: main a+b=c}.
	
	For a general multiplicative action $(X,\CX,\mu,T_n)$, the functions 
	$F \in L^2(\mu)$ whose spectral measure is supported on pretentious 
	multiplicative functions form the \emph{pretentious factor} of the system (see 
	\cite[Theorem~2.4]{Fra-Hom_Triples}). Theorem~\ref{T:Main} therefore establishes 
	multiple recurrence along generalized Pythagorean triples within this factor.
\end{rmk}  

We deduce the following corollary, which  confirms 
Conjecture~\ref{Rado} in the case where the partition is defined by the level 
sets of finitely many pretentious multiplicative functions.

\begin{corollary}
	Let $(a,b,c)$ be a Rado triple, and let $f_1, \ldots, f_s \colon \N \to \S^1$ 
	be pretentious completely multiplicative functions. Then for every open arc $I$ 
	of $\S^1$ containing $1$ there exist distinct $x, y, z \in \N$ such that 
	$$
	a x^2 + b y^2 = c z^2 \quad \text{and} \quad  
	f_j(x), f_j(y), f_j(z) \in I \quad \text{for } j = 1, \ldots, s. 
	$$
\end{corollary}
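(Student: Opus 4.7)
The plan is to deduce the corollary directly from Theorem~\ref{T:Main} by packaging the functions $f_1,\dots,f_s$ into a single pretentious multiplicative action on a torus. Take $X := (\S^1)^s$ with Haar measure $\mu$, let $\CX$ be its Borel $\sigma$-algebra, and define
\[
T_n(z_1,\dots,z_s) := (f_1(n)z_1,\dots,f_s(n)z_s).
\]
Complete multiplicativity of each $f_j$ gives $T_1=\id$ and $T_m\circ T_n = T_{mn}$, and each $T_n$ is an invertible, measure-preserving rotation of the torus. The characters $\chi_\bk(\bz) = z_1^{k_1}\cdots z_s^{k_s}$, $\bk \in \Z^s$, are eigenfunctions with eigenvalue sequence $n \mapsto \prod_j f_j(n)^{k_j}$, a product of pretentious completely multiplicative functions (allowing negative exponents via conjugation), and hence pretentious by the closure of pretentiousness under products and conjugates, a standard consequence of the triangle inequality for the pretentious distance (Section~\ref{SS:mf}). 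Since $L^2(\mu)$ is spanned by the characters $\{\chi_\bk\}_{\bk \in \Z^s}$, the spectral measure of every $F \in L^2(\mu)$ is a sum of point masses supported on pretentious multiplicative functions, so $(X,\CX,\mu,T_n)$ is a pretentious multiplicative action in the sense of the paper.

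Next I would fix an open arc $J \subset \S^1$ containing $1$ with $J \cdot J^{-1} \subset I$ (take $J$ centered at $1$ of slightly more than half the length of $I$), and set $A := J^s \subset X$, so that $\mu(A) = m_{\S^1}(J)^s > 0$. Applying Theorem~\ref{T:Main} with any $\varepsilon < \mu(A)^4$ produces distinct $x,y,z\in\N$ with $ax^2+by^2=cz^2$ and
\[
\mu\bigl(A \cap T_x^{-1}A \cap T_y^{-1}A \cap T_z^{-1}A\bigr) > 0.
\]
In particular this intersection is nonempty; pick any $\bz_0 = (z_{0,1},\dots,z_{0,s})$ in it. Then for every $j \in \{1,\dots,s\}$ and every $n \in \{x,y,z\}$ we have $z_{0,j} \in J$ and $f_j(n)z_{0,j} \in J$, so $f_j(n) = (f_j(n) z_{0,j}) \cdot z_{0,j}^{-1} \in J\cdot J^{-1} \subset I$, which is exactly the conclusion of the corollary.

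There is no real obstacle here: the distinctness of $x,y,z$ is built into the statement of Theorem~\ref{T:Main}, so no separate counting argument is needed, and the only input beyond that theorem is the verification that the constructed rotation on $(\S^1)^s$ is pretentious, which reduces to the well-known closure of the class of pretentious completely multiplicative functions under products and complex conjugation.
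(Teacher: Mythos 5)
Your proposal is correct and follows essentially the same route as the paper: the same product rotation on $(\S^1)^s$, the same verification that its spectral measures live on the pretentious functions $f_1^{k_1}\cdots f_s^{k_s}$, and the same application of Theorem~\ref{T:Main} to a small box $A=J^s$ followed by the $J\cdot J^{-1}\subset I$ trick. One trivial remark: since $I$ need not be centered at $1$, the parenthetical recipe ``$J$ of slightly more than half the length of $I$'' can fail, but this is harmless --- simply take $J=I_\delta$ a small symmetric arc about $1$ with $I_{2\delta}\subset I$, exactly as in the paper.
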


\begin{proof}
Let 
\begin{equation}\label{E:Id}
I_\delta := \{ e^{2\pi i t} \colon t \in (-\delta, \delta) \}
\end{equation}
 for some  
$\delta \in (0,1/4)$ chosen small enough so that $I_{2\delta} \subset I$.  
Let $X:= (\S^1)^s$, let $\CX$ be the Borel $\sigma$-algebra of $X$, and  
let $\mu$ denote the Haar measure on $X$. For $n \in \N$  and $z_1,\ldots, z_s\in \S^1$, define  
$$  
T_n(z_1, \ldots, z_s) := (f_1(n) z_1, \ldots, f_s(n) z_s).  
$$   

We claim that $(X,\CX,\mu,T_n)$ is a pretentious multiplicative action.  
Indeed, the spectral measure of each character of $X$ is supported on  the set
$$
\{ f_1^{k_1} \cdots f_s^{k_s} \colon k_1,\ldots,k_s \in \Z \},  
$$  
which consists entirely of pretentious, completely multiplicative functions.  
Since linear combinations of characters are dense in $L^2(\mu)$, and the class of  
functions in $L^2(\mu)$ whose spectral measures are supported entirely on  
pretentious multiplicative functions is closed, the claim follows.  

Let $A := I_\delta \times \cdots \times I_\delta$. Then $\mu(A) > 0$, and  
Theorem~\ref{T:Main} provides distinct $x,y,z \in \N$ such that  
$$
\mu(A \cap T_x^{-1}A \cap T_y^{-1}A \cap T_z^{-1}A) > 0.  
$$  
Consequently, for each $j=1,\ldots,s$ we have  
$$
I_\delta \cap (f_j(x)^{-1} I_\delta) \cap (f_j(y)^{-1} I_\delta) \cap  
(f_j(z)^{-1} I_\delta) \neq \emptyset,  
$$  
and hence  
$$
f_j(x), \ f_j(y), \ f_j(z) \in I_{2\delta} \subset I  
\quad \text{for } j=1,\ldots,s.  
$$  
This completes the proof. 
\end{proof}

\subsection{Proof strategy} 
We begin with a rough outline of our general proof strategy; for the 
Pythagorean equation, a more detailed sketch appears in 
Section~\ref{Sec_proof_plan}.  

To prove Theorem~\ref{T:Main}, we work with solutions of  
$ a x^2 + b y^2 = c z^2 $ in the parametric form  
$$
x = k \, P_1(m,n), \quad y = k\,  P_2(m,n), \quad z = k \, P_3(m,n),  
\quad k, m, n \in \N,  
$$  
where $P_j$, $j \in \{1,2,3\}$, are binary quadratic forms that will be explicitly described later.  
The key averages to analyze are  
(averaging notation is given in Section~\ref{SS:Notation}):  
\begin{equation}\label{proof_strategy_eq_1}
	\cesE_{(m,n) \in  [N]^2}  
	\int F \cdot T_{P_1(m,n)}F \cdot T_{P_2(m,n)}F \cdot T_{P_3(m,n)}F \, d\mu.  
\end{equation}  
When the action is ergodic and finitely generated, we restrict the averaging 
to a two-dimensional grid $(Qm+1, Qn)$, where $Q \in \N$ is taken to be highly 
divisible.  
Applying the spectral theorem together with the concentration estimates for 
multiplicative functions given in Propositions~\ref{linear_conc} 
and~\ref{quad_conc}, we find that for at least two indices 
$j \in \{1,2,3\}$, the term $T_{P_j(Qm+1,Qn)}F$ in 
\eqref{proof_strategy_eq_1} can be replaced by $F$, while the third term can 
be replaced either by $F$ itself or by $T_{aQn}F$ for some $a \in \N$.  
Taking a multiplicative average over $Q$ then yields a grid for which the following  lower bound  is achieved
$$
\int F\, d\mu \cdot \int F^3 \, d\mu - \varepsilon 
\ \geq\ \Big(\int F\, d\mu\Big)^4 - \varepsilon,
$$  
completing the proof. 

This outline describes the argument in the finitely generated case.  
For general actions, two significant new difficulties arise:  

\begin{enumerate}[(i)]
	\item \emph{Archimedean characters.}  
	Non-concentration phenomena appear for functions whose spectral measures 
	are supported on Archimedean characters. As in earlier work, we address 
	this by introducing a cutoff $S_{\delta}$ (see \eqref{S_delta_def}) that, 
	for sufficiently small $\delta$,  localizes  the 
	Archimedean characters, so the corresponding component of the functions  becomes concentrated around itself.
	
	\item \emph{$1$-pretentious oscillating multiplicative functions.}  
	A greater challenge comes from functions whose spectral measures are 
	supported on $1$-pretentious oscillating multiplicative functions (see the example after \cref{linear_conc}).  
	Our new crucial idea here is to average over a grid $(Qm+1, Qn+v)$, carefully 
	choosing $Q$ and $v=v(Q_1,Q_2,Q_3)$ to satisfy congruences such as those in  
	\cref{lemma_choice_of_v_1}. This allows us to factor out highly divisible, 
	pairwise coprime terms $Q_j$ from each $P_j(Qm+1, Qn+v)$ for  $j \in \{1,2,3\}$.  
	We then perform a triple multiplicative averaging over the $Q_j$, 
	thereby eliminating the contribution from the oscillating component of $F$.  
	The resulting bound is  
	$$
	\int F \cdot F_1 \cdot F_2 \cdot F_3 \, d\mu - \varepsilon,  
	$$  
	where for  $j \in \{1,2,3\}$ the functions  $F_j$  are projections of $F$ onto suitable 
	factors. The proof is completed by applying the estimate in 
	\cref{chu-lemma}.
\end{enumerate}  

The fact that $(a,b,c)$ is a Rado triple plays a crucial role in the 
preceding arguments. It ensures that we can parametrize the solutions to 
\eqref{gen-pyth-eq} so that at least two of the quadratic forms 
$P_j$, $j \in \{1,2,3\}$, have leading coefficient $1$ with respect to the variable $m$.  
This property is essential even in the finitely generated case, as it allows 
us to replace $T_{P_j(Qm+1,Qn+v)}F$ by $F$ for at least two values of $j$.  
In the general case, it is also used to guarantee that the sets $S_{\delta}$ 
have positive upper density.  

To illustrate with a concrete example why our method fails  
when $(a,b,c)$ is not a Rado triple, consider the equation  
  $x^2 + y^2 = 4 z^2$, which admits the parametrization  
$$  
x = k \, 2(m^2-n^2), \quad y = k \, 4 m n, \quad  
z = k \, (m^2 + n^2), \quad k,m,n \in \N.  
$$  
This equation is partition regular with respect to each of the pairs  
$(x,y)$,  $(y,z)$, and $(x,z)$ (see \cite[Theorem~1.2]{fra-klu-mor-2}), but it 
is not partition regular with respect to $(x,y,z)$ since $(1,1,4)$ is not a 
Rado triple.  
If the action is finitely generated and ergodic, our argument yields a lower 
bound of the form  
$$  
\int F \, d\mu \cdot \int F^2 \cdot T_2 F \, d\mu,  
$$  
and the presence of the term $T_2F$ in place of $F$ prevents positivity, as 
expected.

\subsection{An open problem}\label{SS:Problem}
For finitely generated actions, it appears plausible that the multiple 
recurrence property in Conjecture~\ref{RadoErgodic} holds for every 
set $A \in \mathcal{X}$ of positive measure.  
In the special case of the Pythagorean equation, this would follow directly 
from Theorem~\ref{T:Main} together with a positive solution to the problem 
below, which is a special case of \cite[Problem~4]{Fra-Hom_Triples}.  

\begin{problem*}
	Let $(X,\mathcal{X}, \mu, T_n)$ be a finitely generated multiplicative action, 
	and let $F, G, H \in L^\infty(\mu)$. If either $F$ or $G$ is 
	orthogonal to the pretentious factor of the system,\footnote{Equivalently, 
		as shown in \cite[Proposition~5.6]{Fra-Hom_Triples}, the spectral measure of 
		this function is entirely supported on aperiodic completely multiplicative 
		functions.} then  
	$$  
	\lim_{N\to\infty} \cesE_{(m,n) \in [N]^2}  
\int
	T_{mn}F \cdot T_{m^2-n^2}G \cdot T_{m^2+n^2}H \, d\mu= 0. 
	$$  
\end{problem*}  

If one of the functions is constant, it follows from  
\cite{Fra-Host-structure-multiplicative} that the problem admits a positive 
answer for general actions, without assuming finite generation.  
However, the action by dilations  described after 
Conjecture~\ref{RadoErgodic} shows that, in general, finite generation cannot 
be omitted unless the orthogonality assumption is strengthened.

\subsection{Notational conventions}\label{SS:Notation}
We let $\N:=\{1, 2, \ldots\}$, $\N_0:=\{0, 1, 2, \ldots\}$,  $\Q_{>0}:=\Q\cap  (0,+\infty)$,  $\mathbb{S}^1$  
be the unit circle in $\C$, $\mathbb{U}$ the closed complex unit disk,  
and $\T:=\R/\Z$ the torus. For $x\in \R$, we set $e(x):=e^{2\pi i x}$. For  
$z\in \C$, we denote by $\Re(z)$ the real part of $z$. For $N\in \N$, we  let $[N]:=\{1, \dots, N\}$.  

We use $\P$ for the set of prime numbers, and throughout, $p$ denotes a  
prime. For $p \in \P$, $\ell \in \N_0$, and $a\in \Z$, we say that  
$p^{\ell}$ fully divides $a$, written $p^{\ell} \parallel a$, if  
$p^{\ell} \mid a$ and $p^{\ell+1} \nmid a$. We denote this exponent $\ell$  
by $\theta_p(a)$. The notation $(a_1,\ldots, a_k)$ stands for the greatest  
common divisor of $a_1,\ldots, a_k$. The inverse of an element $a$ in  
$\Z_{m}^{\times}$ is denoted by $a^{-1}$.  

If $A$ is a finite non-empty set and $a\colon A\to \C$, we  
define  
$$  
\E_{n\in A}\, a(n) := \frac{1}{|A|}\sum_{n\in A} a(n).  
$$  

For two functions $f$ and $g$, if there exists a constant $C>0$ such  
that $|f(X)| \leq C |g(X)|$ for all $X$ sufficiently large, we write  
$f(X) = \Oh(g(X))$ or $f(X) \ll g(X)$. When the constant $C$ depends on  
a quantity $K$, we write $f(X) = \Oh_{K}(g(X))$ or  
$f(X) \ll_{K} g(X)$.   Given  a collection of variables $X_1, \dots, X_r$ and $l_1, \dots, l_r \in  
\R \cup \{\infty, -\infty\}$, we write  
$\oh_{X_1, \dots, X_s, X_{s+1}\to l_{s+1}, \dots, X_r \to l_r}(1)$ for an  
error term $\e(X_1, \dots, X_r)$ such that for each $X_1, \dots, X_s$  
we have  
$$  
\lim_{X_{s+1}\to l_{s+1}} \limsup_{X_{s+2}\to l_{s+2}} \dots  
\limsup_{X_{r}\to l_{r}} |\e(X_1, \dots, X_r)| = 0.  
$$  

Finally, we abbreviate  
$$  
\lim_{X_{1}\to l_1} \limsup_{X_{2}\to l_2} \dots \limsup_{X_{r}\to l_r}  
\quad \text{by} \quad \lim_{X_1, \dots, X_r}  
$$  
when the value on the left is zero; and 
$$  
\limsup_{X_{1}\to l_1} \limsup_{X_{2}\to l_2} \dots \limsup_{X_{r}\to l_r}  
\quad \text{by} \quad \suplim_{X_1, \dots, X_r}.  
$$

\section{Background and preliminary results}\label{S:Background}

In this section we collect notions and results from number theory and ergodic 
theory that are going to be useful for us later on.

\subsection{Multiplicative averaging}
In our setting, a binary quadratic form is a homogeneous polynomial $P(m,n)$
of degree $2$ with integer coefficients. 
Recall that a binary quadratic form 
$P(m,n)=\alpha m^2 + \beta  mn + \gamma n^2\in \Z[m,n]$, where $\alpha,\beta, \gamma\in \Z$,  is
irreducible if and only if its discriminant 
$\Delta=\beta^2 -4 \alpha \gamma $ is not a perfect square.
Following \cite{fra-klu-mor-2}, given
a binary quadratic form $P$ and $r \in \N$, we define
\begin{equation}\label{omega_def}
    \omega_{P}(r):=|\{ n \in \Z_r: P(1,n)\equiv 0 \pmod{r} \}|.
\end{equation}
In \cite{fra-klu-mor-2}, $\omega_P$ is defined using 
$P(n,1)$ instead of $P(1,n)$, but the same results hold because of symmetry. 
From \cite[Lemma 4.3]{fra-klu-mor-2} we know that if $P$ is irreducible, 
then for all but finitely many $p\in \P$, $\omega_P(p)=0$ or $2$, and 
$\sum_{p\in \P} \frac{\omega_P(p)}{p} =+ \infty.$

We will also need the notion of multiplicative \Folner{} sequences. 
Given $r,K\in \N$ with $K>r$, let 
\begin{equation}\label{eq_foln_def_1}
    \Phi_{r}:=\Big\{ \prod_{p\leq r} p^{\theta_p} \colon 
    r< \theta_p \leq 3r/2\Big \} \:\:
\text{ and } \:\:
    \Phi_{r,K}:=\Big\{ \prod_{r<p\leq K} p^{\theta_p} \colon 
    K< \theta_p \leq 3K/2\Big \}.
\end{equation}
In addition, given a binary quadratic form $P$ and $r,K \in \N$ with 
$K>r$, we denote by $\Phi_{r,K,P}$ the set 
\begin{equation}\label{eq_foln_def_2}
    \Phi_{r,K, P}:=
    \Bigg\{ \prod_{\substack{r< p\leq K\\ \omega_P (p)>0}} 
    p^{\theta_p} : K<\theta_p \leq {3K}/{2} \Bigg\}.
\end{equation}
The sets $\Phi_r$ are asymptotically invariant as $r\to \infty$ under dilation, 
while the sets 
$\Phi_{r,K}$ (respectively $\Phi_{r,K, P}$) are asymptotically invariant as 
$K\to \infty$ under dilation by $p\in \P$ with $p>r$ (respectively with $p>r$
such that $\omega_P (p)>0$).

For each $L\in \N$, let 
\begin{equation}\label{Q_l_def}
    Q_L:= \prod_{p\leq L} p^{2L}.
\end{equation}
Since $\log2$ and $\pi$ are known to be rationally independent (this is a consequence of the Lindemann-Weierstrass theorem), $(n\log 2)_{n\in \N}$ is equidistributed $\mod{2\pi}$, 
which implies 
that given $\delta>0, L\in \N$, there is $n_{\delta, L} \in \N$ such that 
$|2^{in_{\delta, L}} Q_{L}^i -1| \leq \delta$. Given $\delta, L$ we let
\begin{equation}\label{Q_delta_l_def}
Q_{\delta , L}:=2^{n_{\delta,L}} Q_{L}
\end{equation}
and from the previous we have 
\begin{equation}\label{Q delta L def}
    |Q_{\delta , L}^i -1|\leq \delta.
\end{equation}

\subsection{Background on multiplicative functions}\label{SS:mf}
Recall that a function $f \colon \N \to \C$ is called  
\emph{multiplicative} if $f(mn) = f(m) \cdot f(n)$ whenever  
$(m,n) = 1$, and \emph{completely multiplicative} if the same identity  
holds for all $m,n \in \N$.  
The periodic completely multiplicative functions are precisely the  
{\em Dirichlet characters}, which we typically denote by $\chi$.  For our context, we  define the {\em  conductor} of $\chi$  to be its smallest period. 
We let  
\begin{equation*}
	\M := \{ f \colon \N \to \mathbb{S}^1 \; : \; f \text{ is completely multiplicative} \} 
\end{equation*}
and endow $\M$ with the topology of pointwise  
convergence, under which $\M$ is a compact metric space.

Following the terminology of Granville and Soundararajan  
\cite{Granville-Sound-book}, given two multiplicative functions  
$f, g \colon \N \to \U$, we define their \emph{pretentious distance}  
between two real numbers $A,B$ by  
\begin{equation}\label{E: definition of pretentious distance}
	\D(f,g;A,B)
	:= \Big( \sum_{A < p \leq B} 
	\frac{1}{p}\big(1 - \Re \big( f(p)\,  \overline{g(p)} \big)
	\Big)^{1/2}.
\end{equation}
We set $\D(f,g) := \lim\limits_{X \to \infty} \D(f,g; 1, X)$, and say  
that $f$ \emph{pretends} to be $g$, denoted $f \sim g$, if  
$\D(f,g) < +\infty$.  
\begin{definition} A multiplicative function $f$ is called \emph{pretentious} if there  
exist a Dirichlet character $\chi$ and a real number $t$ such that  
$f \sim \chi \cdot n^{it}$.  
We denote by $\M_p$ the set of pretentious multiplicative functions in $\M$.  
\end{definition}
By \cite[Lemma~3.6]{Fra-Klu-Mor}, the set $\M_p$ is a Borel subset of $\M$.   

Given $K, N \in \N$ with $L < N$, a binary quadratic form $P$, and a  
multiplicative function $f$, we define  
\begin{equation}\label{def_F_N}
	F_N(f,L) := \sum_{L < p \leq N} 
	\frac{1}{p} \, \big(f(p)\, \overline{\chi(p)} \, p^{-it} - 1\big)
\end{equation}
and  
\begin{equation}\label{def_G_P,N}
	G_{P,N}(f,L) := \sum_{L < p \leq N} 
	\frac{\omega_P(p)}{p}\,  \big( f(p) \, \overline{\chi(p)}\,  p^{-it} - 1 \big).
\end{equation}

We will need the following concentration estimates for pretentious  
multiplicative functions, first established in \cite{Klu-Man-Po-Ter}  
and subsequently adapted and applied in numerous works  
(see, for example, \cite{cha-mou-tsin,fra-klu-mor-2,Fra-Klu-Mor})

\begin{proposition}{\cite[Lemma 2.5]{Klu-Man-Po-Ter}}\label{linear_conc}
	Let $f \colon \N \to \U$ be a multiplicative function such that  
	$f \sim \chi \cdot n^{it}$ for some $t \in \R$ and some Dirichlet  
	character $\chi$ of period $q$.  
	Let $Q, K \geq 1$ be integers with  
	$\prod_{p \leq K} p \mid Q$, $q \mid Q$, and all prime factors of $Q$  
	lying in the interval $[1, K]$.  
	Then, for any $a \in \N$ with $(a, Q) = 1$, we have  
	\begin{multline}\label{lin_conc_eq_1}
		\limsup_{N \to \infty} 
		\cesE_{n \in [N]}  
		\big| f(Qn+a) - \chi(a) \cdot (Qn)^{it}  
		\cdot \exp(F_N(f,K)) \big| \\
		\ll \D(f, \chi \cdot n^{it}, K, \infty) + K^{-1/2},
	\end{multline}
	where $\D$ and $F_N(f,L)$ are as in  
	\eqref{E: definition of pretentious distance} and \eqref{def_F_N},  
	and the implicit constant is absolute.
\end{proposition}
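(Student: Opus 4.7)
The plan is to reduce the estimate to a variance inequality of Tur\'an--Kubilius type for a related $1$-pretentious completely multiplicative function. The crucial structural input is that the divisibility hypotheses on $Q$ make $Qn+a$ simultaneously $K$-rough and congruent to $a$ modulo $q$: since every prime $p\leq K$ divides $Q$ and $(a,Q)=1$, we have $(Qn+a,p)=1$ for all such $p$, so $Qn+a$ has no prime factor $\leq K$; since $q\mid Q$, we have $\chi(Qn+a)=\chi(a)$; and $(Qn+a)^{it}=(Qn)^{it}\bigl(1+O_{t}(a/(Qn))\bigr)$, whose $\cesE_{n\in[N]}$-average differs from the main term by $\oh(1)$ as $N\to\infty$.

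Next I would pass to a $1$-pretentious function by defining the completely multiplicative $h$ with $h(p):=f(p)\overline{\chi(p)}p^{-it}$ for $p\nmid q$. Then $\D(h,1)<\infty$, and for any $K$-rough $m$ coprime to $q$ one has $f(m)=\chi(m)\,m^{it}\,h(m)$. Combining this with the reductions above, it suffices to prove
$$
\limsup_{N\to\infty}\cesE_{n\in[N]}\bigl|h(Qn+a)-\exp(F_N(f,K))\bigr|\ \ll\ \D(f,\chi\cdot n^{it};K,\infty)+K^{-1/2}.
$$

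Writing $\log h(Qn+a)=\sum_{p^{k}\parallel Qn+a,\, p>K}\log h(p^{k})$ exhibits $\log h(Qn+a)$ as a completely additive function of $Qn+a$, to which I would apply a Tur\'an--Kubilius-type second-moment estimate along the progression $\{Qn+a\}$, yielding
$$
\cesE_{n\in[N]}\Bigl|\log h(Qn+a)-\sum_{K<p\leq N}\tfrac{\log h(p)}{p}\Bigr|^{2}\ \ll\ \sum_{K<p\leq N}\frac{|\log h(p)|^{2}}{p}+\frac{1}{K},
$$
with $1/K$ absorbing the contribution of prime powers $p^{k}$, $k\geq 2$. Since $|h(p)|=1$, we have $|\log h(p)|\ll|h(p)-1|$ and $|h(p)-1|^{2}=2(1-\Re h(p))$, so the right-hand side is controlled by $\D(f,\chi\cdot n^{it};K,\infty)^{2}+K^{-1}$. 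I would then compare $\sum_{K<p\leq N}\log h(p)/p$ with $F_N(f,K)=\sum_{K<p\leq N}(h(p)-1)/p$ via $\log(1+z)=z+O(|z|^{2})$, absorbing the discrepancy into $\D(f,\chi\cdot n^{it};K,\infty)^{2}$. Cauchy--Schwarz converts the second-moment bound into $L^{1}$, and the Lipschitz property of $z\mapsto e^{z}$ on the relevant bounded region transports the additive concentration into the desired multiplicative one.

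The main obstacle is establishing the Tur\'an--Kubilius inequality along $\{Qn+a\}_{n\in[N]}$ with implicit constants that do not deteriorate with $Q$, while simultaneously controlling the $K^{-1/2}$ error coming from prime-power and tail contributions. The hypotheses $\prod_{p\leq K}p\mid Q$ and $q\mid Q$ are precisely what forces the small-prime factorization profile of $Qn+a$ to be deterministic, so that the variance depends only on the tail $\D(f,\chi\cdot n^{it};K,\infty)$ and on $K$, not on $Q$ itself.
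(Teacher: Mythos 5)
The paper does not actually prove this proposition; it is imported verbatim from \cite[Lemma 2.5]{Klu-Man-Po-Ter}, so the only meaningful comparison is with the proof given there. Your overall strategy is the right one and matches that proof in spirit: the hypotheses $\prod_{p\le K}p\mid Q$, $q\mid Q$, $(a,Q)=1$ make $Qn+a$ $K$-rough with $\chi(Qn+a)=\chi(a)$ and $(Qn+a)^{it}=(Qn)^{it}+\oh(1)$ on average, reducing everything to a concentration statement for the twisted $1$-pretentious function $h(p)=f(p)\overline{\chi(p)}p^{-it}$, which is then handled by a Tur\'an--Kubilius-type second-moment argument along the progression, with constants independent of $Q$ precisely because no prime $\le K$ can divide $Qn+a$.

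There are, however, two genuine gaps in your implementation as written. The main one: the proposition assumes only $f\colon\N\to\U$, i.e.\ values in the closed unit disk, so your step ``since $|h(p)|=1$'' is unjustified. Pretentiousness only forces $\sum_{p>K}\frac{1}{p}\bigl(1-\Re h(p)\bigr)$ to be finite, so $h(p)$ may be small or even $0$ at infinitely many primes $p>K$; then $\log h(Qn+a)$ is undefined or, when $h(p)=\varepsilon$ is tiny, contributes $(\log(1/\varepsilon))^2/p$ to your variance, which is not controlled by $\D(f,\chi\cdot n^{it};K,\infty)^2+K^{-1}$, and the bound $|\log h(p)|\ll|h(p)-1|$ simply fails off the unit circle. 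The standard repair (and the route of the cited proof) avoids logarithms entirely: compare $h(Qn+a)$ with $\prod_{p\mid Qn+a,\ p>K}e^{h(p)-1}$, using that $|z-e^{z-1}|\ll|z-1|^2$ uniformly on $\U$ and $|e^{z-1}|\le 1$ there, and apply Tur\'an--Kubilius to the additive function $m\mapsto\sum_{p\mid m,\ p>K}\bigl(h(p)-1\bigr)$, whose prime terms are bounded by $2$ and whose variance is controlled by $\sum_{p>K}|h(p)-1|^2/p\ll\D(f,\chi\cdot n^{it};K,\infty)^2$. The second gap: your identity $f(m)=\chi(m)\,m^{it}\,h(m)$ for $K$-rough $m$ coprime to $q$ is false for non-squarefree $m$, since $f$ is only multiplicative while $h$ is completely multiplicative, so $f(p^k)\ne f(p)^k$ in general; this is harmless but must be stated as an approximation, the set of $n\in[N]$ with $p^2\mid Qn+a$ for some $p>K$ having density $\Oh\bigl(1/(K\log K)\bigr)$, which is absorbed into the $K^{-1/2}$ error. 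With these two repairs your outline becomes essentially the argument of \cite{Klu-Man-Po-Ter}.
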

\begin{example}[$1$-pretentious oscillatory multiplicative function]
If  $f(p) := e(1 / \log\log p)$ for all  
	$p \in \P$, then $f \sim 1$, but the imaginary part of the  
	series $\sum_{p \in \P} \frac{1}{p} \big( 1 - f(p) \big)$ diverges.  
	Consequently, the term $\exp\!\big(F_N(f,K)\big)$ in~\eqref{lin_conc_eq_1} is  
	oscillatory.
\end{example}

The next is a variant that deals with binary quadratic forms.

\begin{proposition}{\cite[Proposition 4.9]{fra-klu-mor-2}}\label{quad_conc}
	Let $P \in \Z[m,n]$ be an irreducible binary quadratic form and 
	let $f \colon \Z \to \U$ be an even multiplicative function such that 
	$f \sim \chi \cdot n^{it}$ for some $t \in \R$ and some
	Dirichlet character $\chi$ of period $q$. 
	Let $Q, K \geq 1$ be integers, suppose that all  the prime factors of $Q$ 
	belong to $[1,K]$, and let $K$ be sufficiently large, depending only 
	on $P$, and so that
	$\D(f,\chi \cdot n^{it}, K, \infty) \leq 1$. Then 
	\begin{multline}\label{quad_conc_eq_1}
		\limsup_{N\to \infty} \cesE _{m,n\in [N]} |f(P_c (Qm+a, Qn+b)) - \\
		\chi(P_c(a,b)) \cdot (P_c(Qm, Qn))^{it} \cdot \exp(G_{P,N}(f,K)) |
		\ll_{P} \D(f, \chi \cdot n^{it},K,\infty) + K^{-1/2},
	\end{multline}
	where in the previous
	$c:=(P(a,b),Q)$, $P_c:=P/c$, and $a,b$ are such that $q$ and $\prod_{p\leq K}p$
	divide $Q/c$. Finally,  $\D$ and  $G_{P,N}$ are as in   
	\eqref{E: definition of pretentious distance} and \eqref{def_G_P,N}, and the implicit constant only 
	depends on $P$. 
\end{proposition}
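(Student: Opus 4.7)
The plan is to adapt the proof of the linear concentration estimate (Proposition \ref{linear_conc}) to the bivariate setting, where the statistics of primes dividing the values $P_c(Qm+a, Qn+b)$ are governed by the weighting $\omega_P(p)/p$ rather than the uniform density $1/p$ of the linear case. I expect the argument to split cleanly into an extraction of the character and archimedean components of $f$, followed by a Tur\'an--Kubilius-style second-moment estimate on the residual factor $g := f \cdot \overline{\chi} \cdot n^{-it}$, which satisfies $\D(g,1) = \D(f,\chi\cdot n^{it})$ and is thus close to $1$ on primes above $K$.

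First I would decompose $f = g \cdot \chi \cdot n^{it}$. Using evenness of $f$, the assumption $q \mid Q/c$, and the fact that $(P_c(a,b), q) = 1$ after the reduction by $c$, one extracts the constant $\chi(P_c(a,b))$ cleanly: on the overwhelming majority of $(m,n) \in [N]^2$ one has $\chi(P_c(Qm+a, Qn+b)) = \chi(P_c(a,b))$, the bad set having density $\oh_{K\to\infty}(1)$ because all primes up to $K$ divide $Q$. For the archimedean part, the identity $P_c(Qm+a, Qn+b) = P_c(Qm, Qn)\bigl(1 + \Oh((m+n)^{-1})\bigr)$ for $N$ large gives
$$(P_c(Qm+a, Qn+b))^{it} = (P_c(Qm, Qn))^{it} \bigl(1 + \oh_{N\to\infty}(1)\bigr),$$
uniformly on the bulk of $[N]^2$.

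The main task is then to show that for most $(m,n)\in[N]^2$, we have $g(P_c(Qm+a, Qn+b)) \approx \exp(G_{P,N}(f,K))$. Taking logarithms and discarding both the primes $p \leq K$ (which are absorbed into $Q$ and contribute a deterministic constant via the fixed residue of $P_c(Qm+a,Qn+b)$ modulo $Q$) and the tail $p > N$ (which contributes $1+\oh(1)$ on average), the bulk of the mass is
$$\sum_{\substack{K<p\leq N \\ p \mid P_c(Qm+a, Qn+b)}} (g(p) - 1).$$
I would control this via a second-moment bound: after expanding the square and subtracting the target mean $G_{P,N}(f,K)$, the diagonal terms yield $\sum_{K<p\leq N} \frac{\omega_P(p)}{p}|g(p)-1|^2 \ll \D(g,1;K,\infty)^2$ using $|g(p)-1|^2 \leq 2(1-\Re g(p))$, while the cross terms $p_1 \neq p_2$ require the density of $(m,n) \in [N]^2$ with $p_1 p_2 \mid P_c(Qm+a,Qn+b)$. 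Since $P$ is irreducible and $(p_1 p_2, Q) = 1$, the Chinese remainder theorem gives this density as $\omega_P(p_1)\omega_P(p_2)/(p_1 p_2) + \Oh(N^{-1})$, producing the cancellation that exactly matches the definition of $G_{P,N}(f,K)$ in \eqref{def_G_P,N}.

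The main obstacle is the bivariate second-moment estimate just described, and specifically the uniform control of the joint distribution of prime divisors of $P_c(Qm+a, Qn+b)$ across $(m,n)\in[N]^2$. The clean bound on the right-hand side of \eqref{quad_conc_eq_1} relies on the structural fact (\cite[Lemma~4.3]{fra-klu-mor-2}) that $\omega_P(p) \in \{0, 2\}$ for all but finitely many $p$ when $P$ is irreducible, which keeps the cross-term counts manageable, together with standard sieve bounds that handle the exceptional set of $(m,n)$ where $P_c(Qm+a, Qn+b)$ has an anomalously large squarefull part or a prime divisor exceeding $N$ with large multiplicity. The $P$-dependence in the implicit constant stems from these finitely many anomalous primes and from discriminant-dependent constants in the sieve estimates.
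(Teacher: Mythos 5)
The paper does not actually prove this proposition: it is imported verbatim from \cite[Proposition 4.9]{fra-klu-mor-2}, so there is no internal proof to measure your argument against. That said, your strategy — twist $f$ by $\overline{\chi}\cdot n^{-it}$, use $q\mid Q/c$ to freeze the character value at $\chi(P_c(a,b))$ (in fact this holds for \emph{every} $(m,n)$, since $P_c(Qm+a,Qn+b)\equiv P_c(a,b)\pmod{Q/c}$, not merely for most), compare archimedean factors, and run a Tur\'an--Kubilius second moment in which the density of $p\mid P_c(Qm+a,Qn+b)$ is $\omega_P(p)/p$ and joint divisibility for $p_1\neq p_2$ factors by CRT and irreducibility — is the standard route and is, in substance, how the cited source argues.

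Two steps of your sketch, however, would not survive as written. First, ``taking logarithms'' of $g=f\cdot\overline{\chi}\cdot n^{-it}$ is not available: $f$ takes values in the closed disk $\U$, so $g(p)$ may vanish or be far from $1$ at individual primes, and additive concentration of $\sum_{p\mid P_c(\cdot)}(g(p)-1)$ does not by itself give multiplicative concentration of $g(P_c(\cdot))$ around $\exp(G_{P,N}(f,K))$, whose modulus is typically $<1$, matching the typical decay of $|g(P_c(\cdot))|$. One must either compute first and second moments of $g(P_c(Qm+a,Qn+b))$ directly via a convolution/mean-value expansion ($g=1*\psi$), or first discard the $(m,n)$ whose value has a prime factor $p>K$ with $|g(p)-1|$ large — admissible since such primes satisfy $\sum 1/p\ll \D(f,\chi\cdot n^{it};K,\infty)^2$ — and control prime-power divisors before linearizing the remaining factors. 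Second, the tail of large primes: values $P_c(Qm+a,Qn+b)\asymp Q^2N^2$ typically \emph{do} have prime factors exceeding $N$, which are absent from $G_{P,N}(f,K)$, and your claim that they ``contribute $1+\oh(1)$ on average'' is exactly what needs proof; moreover, the CRT error term $\Oh(N^{-1})$ you quote for the divisibility densities is false once the modulus is comparable to or larger than $N$, so the second moment must be truncated and the range $p>N$ handled separately (e.g.\ by Cauchy--Schwarz together with $\sum_{N<p\le CQ^2N^2}1/p=\Oh(1)$ and a line-counting bound for the solution set mod $p$). A smaller point: the archimedean comparison $(P_c(Qm+a,Qn+b))^{it}\approx(P_c(Qm,Qn))^{it}$ is not uniform for indefinite forms near their null lines, though the exceptional set has density $\oh_{N\to\infty}(1)$ and is harmless under the limsup. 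With these (standard but necessary) repairs your outline matches the argument of \cite{fra-klu-mor-2}; as stated, the two points above are genuine gaps rather than routine omissions.
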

\begin{rmk}\label{uniform_in_Q}
	Note that in \eqref{lin_conc_eq_1} and \eqref{quad_conc_eq_1}, the bounds 
	on the right-hand side depend only on $K$ and not on $Q$. Therefore, 
	if $\mathcal{C}_K$ is a finite collection of $Q$'s and, for each 
	$Q \in \mathcal{C}_K$, $a_Q$ is such that \eqref{lin_conc_eq_1} holds
	(respectively $a_Q, b_Q$ are such that \eqref{quad_conc_eq_1} holds), 
	then in \eqref{lin_conc_eq_1} (respectively
	\eqref{quad_conc_eq_1}) one may replace  
	``$\limsup_{N\to \infty} \cesE_{m,n\in [N]}$'' by  
	``$\limsup_{N\to \infty} \max_{Q \in \mathcal{C}_K} \cesE_{m,n\in [N]}$''.
\end{rmk}

\subsection{Spectral measures} 
Note that $\mathbb{S}^1$-valued completely multiplicative functions on 
$\Q_{>0}$ are uniquely determined by their values on $\N$, so we may 
identify $\M$ with the Pontryagin dual of the discrete group 
$(\Q_{>0}, \times)$.

Let $(X, \CX, \mu, T_n)$ be a multiplicative action and let $F \in L^2(\mu)$. 
The map 
$$
(\Q_{>0}, \times) \to \C, \quad r/s \mapsto \int_X T_r F \cdot T_s \overline{F} \, d\mu
$$
is well-defined and positive definite. By the Bochner-Herglotz theorem,  
there exists a finite Borel measure $\sigma_F$ on $\M$  
(called the \emph{spectral measure} of $F$) such that  
$$
\int_X T_r F \cdot T_s \overline{F} \, d\mu 
= \int_{\M} f(r) \cdot \overline{f(s)} \, d\sigma_F(f) 
\quad \text{for all } r,s \in \Q_{>0}.
$$

From the above, we can easily deduce the following identity, which we will use frequently:
\begin{equation}\label{spec_eq_1}
	\Bigg\lVert \sum_{k=1} ^{l} c_k T_{r_k} F \Bigg\rVert_{L^2(\mu)}=
	\Bigg\lVert \sum_{k=1} ^{l} c_k f({r_k}) \Bigg\rVert_{L^2(\sigma_F(f))}
\end{equation}
	 for all  $l\in \N$, $c_1, \ldots, c_l \in \C$,
	$r_1, \ldots, r_l \in \Q_{>0}$, where, for $f\in \M$ and $r=m/n \in \Q_{>0}$,
$f(r)=f(m)/f(n)=f(m)\overline{f(n)}$.
From \eqref{spec_eq_1}, we immediately obtain that for all $r\in \Q_{>0}$,
\begin{equation}\label{spec_eq_2}
	\lVert T_{r} F -F \rVert_{L^2(\mu)}=
	\lVert f({r}) - 1 \rVert_{L^2(\sigma_F(f))}.
\end{equation}
\subsection{Factors of multiplicative actions}\label{SS:factors}
In the proof of our main result, we will require  
that certain  subsets of $L^{\infty}(\mu)$ form factors  
of the original system. Recall that $\mathcal{M}_p$ is the set  
of pretentious multiplicative functions. We denote by $\mathcal{A}$  
the set of Archimedean characters, namely
\begin{equation}\label{archim_def}
    \A:=\{n^{it}: t\in \R\},
\end{equation}
and by $\M_{p,\textup{fin.supp.}}$ the set 
\begin{multline}\label{def_pretent_conc}
    \M_{p,\textup{fin.supp.}}:= \{ f\in \M_p \colon \text{ there are } \chi, t \text{ such that } f(p)=\chi(p) \, p ^{it} \\
    \text{ for all but finitely many primes }p\}.
\end{multline}
Finally, given an irreducible binary quadratic form $P$, let 
\begin{multline}\label{def_pretent_conc_1}
    \M_{p,\textup{fin.supp.}, P}:= \{ f\in \M_p \colon \text{ there are } \chi, t \text{ such that } f(p)=\chi(p) \, p ^{it} \\
    \text{ for all but finitely many 
    primes }p \text{ with }  \omega_P(p)>0 \}.
\end{multline}
\begin{rmk}\label{useful_rmk_1}
	If $f\in \M_{p,\textup{fin.supp.}, P}$, then there exist
	$\chi_1, \chi_2, t_1, t_2$ such that $f\sim \chi_1 \cdot n^{it_1}$ and
	$f(p)=\chi_2 (p)\,  p^{it_2}$ for all but finitely many primes $p$ with
	$\omega_P(p)>0$. By \cite[Lemma 4.5]{fra-klu-mor-2}, this implies that
	$t_1 = t_2$ and $\chi_1(p) = \chi_2(p)$ for all but finitely many such $p$.
	Consequently, we have $f\sim \chi_1\cdot  n^{it_1}$ and
	$f(p) = \chi_1 (p)\, p^{it_1}$ for all but finitely many primes $p$ with
	$\omega_P(p)>0$.
\end{rmk}
The set $\A$ is a Borel subset of $\M$, since it is a countable union of compact
sets. We will now prove that $\M_{p,\textup{fin.supp.}}$ and 
$\M_{p,\textup{fin.supp.}, P}$ are also Borel subsets of $\M$.
\begin{lemma}
	The sets $\M_{p,\textup{fin.supp.}}$ and 
	$\M_{p,\textup{fin.supp.}, P}$ are Borel subsets of $\M$.
\end{lemma}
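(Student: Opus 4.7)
The plan is to exploit the canonical identification of $\M$ with the compact product space $(\S^1)^{\P}$---valid because an $\S^1$-valued completely multiplicative function on $\N$ is determined by its values on the primes, and the topology of pointwise convergence matches the product topology. Under this identification I would write
$$
\M_{p,\textup{fin.supp.}} \;=\; \bigcup_{\chi}\bigcup_{N\in\N} E_{\chi,N},
\qquad
E_{\chi,N} := \{f\in\M \colon \exists\, t\in\R,\ f(p)=\chi(p)\,p^{it}\ \text{for every prime } p>N\},
$$
where the outer union is over the countable family of Dirichlet characters. The inclusion ``$\subseteq$'' is immediate from \eqref{def_pretent_conc}. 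For ``$\supseteq$'', note that if $f$ satisfies the inner condition then only finitely many summands of the series defining $\D(f,\chi\cdot n^{it})$ can be nonzero, so $f\sim\chi\cdot n^{it}$ and hence $f\in\M_p$ automatically.

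The core step is showing each $E_{\chi,N}$ is Borel. Setting $\P_N:=\{p\in\P:p>N\}$, consider the continuous map $\pi_{\chi,N}\colon \M\to(\S^1)^{\P_N}$, $f\mapsto (f(p)\,\overline{\chi(p)})_{p>N}$. Then $E_{\chi,N}=\pi_{\chi,N}^{-1}(W_N)$, where
$$
W_N \;:=\; \{(p^{it})_{p>N} \colon t\in\R\} \;=\; \bigcup_{n\in\N}\,\{(p^{it})_{p>N} \colon t\in[-n,n]\}.
$$
Each set on the right is the image of a compact interval under a continuous map into the Hausdorff space $(\S^1)^{\P_N}$, hence compact. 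Thus $W_N$ is $F_\sigma$, in particular Borel, so $E_{\chi,N}$ is Borel, and $\M_{p,\textup{fin.supp.}}$ is a countable union of Borel sets. This mirrors the observation already made in the text that $\A$ is Borel.

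For $\M_{p,\textup{fin.supp.},P}$ I would run the same argument, replacing $\P_N$ by $\P_{P,N}:=\{p\in\P:p>N,\ \omega_P(p)>0\}$, which is infinite by $\sum_p\omega_P(p)/p=\infty$ from \cite[Lemma~4.3]{fra-klu-mor-2}. Letting $E_{\chi,N,P}$ be the analogously defined set (quantifier restricted to primes in $\P_{P,N}$), the same compactness reasoning shows each $E_{\chi,N,P}$ is Borel. The one distinction from the previous case is that pretentiousness is no longer automatic, since $f$ is unconstrained on primes with $\omega_P(p)=0$; one therefore writes
$$
\M_{p,\textup{fin.supp.},P} \;=\; \M_p \;\cap\; \bigcup_{\chi}\bigcup_{N\in\N} E_{\chi,N,P},
$$
and invokes \cite[Lemma~3.6]{Fra-Klu-Mor} (quoted just after Definition of $\M_p$) to conclude. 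No serious obstacle is anticipated; the only mildly delicate point is packaging the parametrization $t\mapsto(p^{it})_{p>N}$ as an explicit $F_\sigma$ rather than invoking the deeper Lusin--Souslin theorem on images of continuous injections.
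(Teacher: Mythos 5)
Your proof is correct, but it takes a genuinely different route from the paper's. The paper works analytically: it defines the Borel functions $H_{n,k,\chi}(f)=\inf_{t\in\Q\cap[-k,k]}\sum_{p\geq n}p^{-2}|f(p)-\chi(p)p^{it}|$ (a countable infimum of pointwise limits of continuous functions), takes their zero sets $E_{n,k,\chi}$, and uses continuity in $t$ together with compactness of $[-k,k]$ to identify $\M_p\cap\bigcup_{n,k,\chi}E_{n,k,\chi}$ with $\M_{p,\textup{fin.supp.}}$ (and similarly with weights $\omega_P(p)$ for the second set). You instead argue topologically: identifying $\M$ with a product of circles over the primes, you write each exceptional-set class as the preimage, under the continuous twist map $f\mapsto(f(p)\overline{\chi(p)})_{p>N}$, of the $\sigma$-compact ``Archimedean orbit'' $\{(p^{it})_{p>N}:t\in\R\}$, which is Borel since each $t$-range $[-n,n]$ has compact image. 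Both arguments ultimately rest on the same compactness-in-$t$ phenomenon, but yours packages it structurally (and, as you note, correctly sidesteps any appeal to Lusin--Souslin by exhibiting an explicit $F_\sigma$), whereas the paper's is a hands-on measurable-functions computation; your version also makes explicit the nice observation that for $\M_{p,\textup{fin.supp.}}$ the intersection with $\M_p$ is automatic, while for $\M_{p,\textup{fin.supp.},P}$ it is genuinely needed and you correctly import the Borelness of $\M_p$ from \cite[Lemma~3.6]{Fra-Klu-Mor}, exactly as the paper does. One cosmetic repair: when the conductor of $\chi$ has a prime factor $p>N$ you have $\chi(p)=0$, so $\pi_{\chi,N}$ does not land in $(\S^1)^{\P_N}$; either view it as a map into $\U^{\P_N}$ (or $\C^{\P_N}$), where the same $\sigma$-compactness argument applies verbatim, or note that such $E_{\chi,N}$ are empty, hence trivially Borel.
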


\begin{proof}
	We first prove that $\M_{p,\textup{fin. supp.}}$ is Borel.  
	Given $n, k \in \N$ and a Dirichlet character $\chi$, 
	consider the map  
	$$
	H_{n,k,\chi}: \M \to [0,+\infty), \quad 
	H_{n,k,\chi}(f):=\inf_{t\in \Q \cap [-k,k]} 
	\sum_{p\geq n} \frac{1}{p^2} \, |f(p)-\chi(p) \, p^{it}|.
	$$
	The function $H_{n,k,\chi}$ is Borel measurable, since it is the 
	countable infimum of measurable functions.\footnote{Each series in this 
		expression is a Borel measurable function with respect to $f$, as it is the pointwise 
		limit of continuous functions.}  
	It follows that  
	$$
	E_{n,k,\chi}:=\{f\in \M: H_{n,k,\chi}(f)=0\}
	$$
	is Borel measurable.  
	For each $f\in \M$, the map $\R \to [0,+\infty)$ given by 
	$$
	t\mapsto \sum_{p\geq n} \frac{1}{p^2}\,  |f(p)-\chi(p) 
\, p^{it}|
	$$
	is continuous. Using this observation, it is straightforward to verify 
	that  
	$$
	\M_{p,\textup{fin.supp.}}=\M _p \cap \Big( \bigcup_{n,k\in \N}
	\bigcup_{\chi} E_{n,k,\chi}\Big),
	$$
	where the innermost union is taken over all Dirichlet characters $\chi$.  
	Since the set of Dirichlet characters is countable, we deduce that 
	$\M_{p,\textup{fin.supp.}}$ is Borel measurable.  
	
	If, in the previous argument, we instead define $H_{n,k,\chi}(f)$ by  
	$$
	H_{n,k,\chi}(f):=\inf_{t\in \Q \cap [-k,k]} 
	\sum_{p\geq n} \frac{\omega_P(p)}{p^2} |f(p)-\chi(p) p^{it}|,
	$$
	then the same reasoning shows that $\M_{p,\textup{fin. supp.},P}$ is 
	also Borel measurable.  
	This completes the proof.
\end{proof}

Given a pretentious multiplicative action $(X, \X, \mu, T_n)$, consider the 
subsets $X_{\A}$, \\ 
$X_{\textup{fin.supp.}}$ and $X_{\textup{fin.supp.},P}$ of $L^{\infty}(\mu)$ defined by 
\begin{equation}\label{def_X_A}
X_{\A}:=\{ F \in L^{\infty}(\mu): \sigma_F \text{ is supported on } \A\},
\end{equation}
\begin{equation}\label{def_X_p_fin_supp}
X_{p,\textup{fin.supp.}}:=\{ F \in L^{\infty}(\mu): \sigma_F \text{ is supported on } \M_{p,\textup{fin.supp.}}\},
\end{equation}
and
\begin{equation}\label{def_X_p_fin_supp_P}
X_{p,\textup{fin.supp.},P}:=\{ F \in L^{\infty}(\mu): \sigma_F \text{ is supported on } \M_{p,\textup{fin.supp.},P}\}.
\end{equation}
We will prove that $X_{\A}, X_{p,\textup{fin.supp.}}$, and  
$X_{p,\textup{fin.supp.},P}$ are conjugation closed algebras, so they 
define factors $\X_{\A}, \X_{p,\textup{fin.supp.}},$ and $\X_{p,\textup{fin.supp.},P}$ of the original system.
In view of \cite[Lemma 4.6 (i)-(iii)]{Fra-Hom_Triples}  we have that 
$X_{\A}, X_{p,\textup{fin.supp.}}$ and $X_{p,\textup{fin.supp.},P}$ 
are closed $T_n$-invariant subspaces of $L^2(\mu)$, and one easily sees that 
they are conjugation closed. 

As a result, to prove that they define factors, it suffices to prove that for 
$\mathcal{H}\in \{ X_{\A}, X_{p,\textup{fin.supp.}}, X_{p,\textup{fin.supp.},P} \}$, the following implication holds
\begin{equation}\label{E:F12}
 F_1, F_2 \in \mathcal{H}  \implies F_1 \cdot F_2 \in \mathcal{H}.
\end{equation}

For $r,K \in \N$ with $r<K$ and $Q \in \Phi_{r,K}$, we use the Chinese 
remainder theorem to pick $v=v(r, K, Q)\in \N$ such that
\begin{itemize}
    \item $v\equiv Q^{-1} \pmod {p^r}$ for $p\leq r$,
    \item $v\equiv 1 \pmod {p}$ for $r<p\leq K$.
\end{itemize}
Then it is easy to see that 
\begin{equation}\label{eq_cong_lw_1}
    \big(v, \prod_{p\leq K} p \big)=1 \: \text{ and } \: 
    v\equiv Q^{-1}  \! \!\!\! \pmod{\prod_{p\leq r} p^r}.
\end{equation}
On the other hand, given an irreducible binary quadratic form $P$,
$r, K\in \N$ with $r<K$ and $Q\in \Phi_{r, K, P}$,
use the Chinese remainder theorem to pick $u=u(r, K, Q,P)\in \N$ with
\begin{itemize}
    \item $u\equiv Q^{-1} \pmod {p^r}$ for $p\leq r$,
    \item $u\equiv 1 \pmod {p}$ for $r<p\leq K$.
\end{itemize}
Similarly, as before, we have 
\begin{equation}\label{eq_cong_lw_2}
    \big(u, \prod_{p\leq K} p \big)=1 \: \text{ and } \: 
    u\equiv Q^{-1} \! \! \! \!  \pmod{\prod_{p\leq r} p^r}.
\end{equation}

To deduce the implication \eqref{E:F12}, we will use the following concentration
estimates for functions in $L^2(\mu)$.

\begin{proposition}\label{P: conc_L_2_factors}
Let $F\in L^{\infty}(\mu)$ and $P$ be an irreducible binary 
quadratic form, and let $\Phi_r$, $\Phi_{r,K}$,  $Q_K$,  be as in 
\eqref{eq_foln_def_1}, \eqref{Q_l_def}.
Then 
    \begin{enumerate}
        \item \label{factors_i} $ F \in X_{\A}$ if and only if  
            \begin{equation}\label{eq_arch}
        \lim_{r\to \infty} \limsup_{N\to \infty} \max_{Q \in \Phi_r} \cesE_{n\in [N]}
        \Big \lVert T_{\frac{Qn+1}{Qn}}F -F
         \Big \rVert _{L^2(\mu)}=0.
    \end{equation}
    \item \label{factors_ii} $ F \in X_{p,\textup{fin.supp.}}$ if and only if
    \begin{equation}\label{eq_fin_sup_factor}
        \lim_{r\to \infty} \limsup_{K\to \infty} \limsup_{N\to \infty} 
        \max_{Q \in \Phi_{r,K}} \cesE_{n\in [N]}
        \Big \lVert T_{\frac{Q(Q_K Q n+v(r,K,Q))}{Q_K Q^2 n+1}}F -F
         \Big \rVert _{L^2(\mu)}=0.
    \end{equation}
    \item \label{factors_iii} $ F \in X_{p,\textup{fin.supp.},P}$ if and only if
    \begin{equation}\label{eq_fin_sup_P_factor}
        \lim_{r\to \infty} \limsup_{K\to \infty} \limsup_{N\to \infty} 
        \max_{Q \in \Phi_{r,K,P}} \cesE_{n\in [N]}
        \Big \lVert T_{\frac{Q(Q_K Q n+u(r,K,Q,P))}{Q_K Q^2 n+1}}F -F
         \Big \rVert _{L^2(\mu)}=0.
         \end{equation}
       \end{enumerate}
\end{proposition}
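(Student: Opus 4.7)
All three parts share a common skeleton. Using \eqref{spec_eq_2}, each displayed $L^2(\mu)$-norm becomes
\[
\bigl\lVert T_{r_{n,Q}} F - F\bigr\rVert_{L^2(\mu)}^2
= \int_{\M_p} \bigl(2 - 2\Re f(r_{n,Q})\bigr)\, d\sigma_F(f),
\]
where $r_{n,Q}$ denotes the rational argument of $T$ in the respective display (so $r_{n,Q}=(Qn+1)/(Qn)$, etc.) and the domain of integration is $\M_p$ by the pretentiousness hypothesis. Fubini reduces the problem to analyzing $\cesE_n f(r_{n,Q})$ pointwise in $f\in\M_p$, uniformly in $Q$ over the relevant Folner set (via \cref{uniform_in_Q}), and then integrating against $\sigma_F$.

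\textbf{Parts (ii) and (iii).} Write $f\sim\chi\cdot n^{it}$ and apply \cref{linear_conc} separately to $f(Q_K Q n+v)$ (with $(v,Q_K Q)=1$ from \eqref{eq_cong_lw_1}) and to $\overline{f}(Q_K Q^2 n+1)$. The oscillatory factors $\exp(F_N(f,K))$ and $\exp(\overline{F_N(f,K)})$ combine to the real factor $\exp(2\Re F_N(f,K))$, which tends to $1$ as $K\to\infty$ since $\Re F_N(f,K)\to-\D(f,\chi n^{it};K,\infty)^2\to 0$; meanwhile $(Q_K Q n)^{it}/(Q_K Q^2 n)^{it}=Q^{-it}$. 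The congruence $vQ\equiv 1\pmod{\prod_{p\le r}p^r}$ from \eqref{eq_cong_lw_1} gives $\chi(v)=\overline{\chi(Q)}$ once $r$ exceeds the conductor of $\chi$, so
\[
\lim_{K,N\to\infty}\cesE_n f(r_{n,Q})=f(Q)\,Q^{-it}\,\overline{\chi(Q)}=\prod_{p\mid Q}\Bigl(\frac{f(p)}{\chi(p)\,p^{it}}\Bigr)^{\theta_p}=:\phi_Q(f).
\]
Set $g(p):=f(p)/(\chi(p)p^{it})$, so $\phi_Q(f)=g(Q)$ is multiplicative in $Q$. The asymptotic invariance of $\Phi_{r,K}$ under dilation by each fixed prime $p_0>r$ (as $K\to\infty$), together with complete multiplicativity of $g$, yields $(g(p_0)-1)\cdot \mathrm{avg}_{Q\in\Phi_{r,K}}\phi_Q(f)\to 0$. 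If $f\notin\M_{p,\textup{fin.supp.}}$, then some $p_0>r$ has $g(p_0)\ne 1$, so $\mathrm{avg}_Q\phi_Q(f)\to 0$ and $\mathrm{avg}_Q|\phi_Q(f)-1|^2\to 2$; if $f\in\M_{p,\textup{fin.supp.}}$ and $r$ exceeds every exceptional prime of $f$, then $\phi_Q(f)=1$ for every $Q\in\Phi_{r,K}$. Integrating against $\sigma_F$ (using dominated convergence, $|\phi_Q|=1$) and bounding $\max_Q\ge\mathrm{avg}_Q$ for the lower bound establishes the equivalence. Part (iii) is identical, with $u$ and $\Phi_{r,K,P}$ replacing $v$ and $\Phi_{r,K}$; \cref{useful_rmk_1} ensures the relevant $\chi$ and $t$ agree on primes with $\omega_P(p)>0$.

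\textbf{Part (i).} For $f=n^{it}\in\A$, $f(r_{n,Q})=(1+1/(Qn))^{it}$ and the estimate $|(1+1/(Qn))^{it}-1|\le|t|/(Qn)$ give $\cesE_n|f(r_{n,Q})-1|^2\ll t^2/Q^2$; tightness of $\sigma_F|_\A$ (a finite Borel measure on the parameter $t\in\R$) followed by truncation yields the forward direction. For the converse, \cref{linear_conc} applied to $f(Qn+1)$ together with $\overline{f(Qn)}=\overline{f(Q)f(n)}$ gives, as $N,r\to\infty$,
\[
\cesE_n f\bigl((Qn+1)/(Qn)\bigr)\longrightarrow \overline{f(Q)}\,Q^{it}\,M,
\]
where $M$ is the Hal\'asz mean value of $h:=n^{it}\overline{f(n)}\sim\overline{\chi}$. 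If $\chi$ is nontrivial, $h$ is not pretentious to $1$, so $M=0$ by Hal\'asz, and the LHS of \eqref{eq_arch} converges to $2$. If $\chi$ is trivial but $f\ne n^{it}$, then $h\sim 1$ with $h\not\equiv 1$; the identity $\cesE_{n\in[N]}|h(n)-M|^2\to 1-|M|^2$ together with the fact that an $\mathbb{S}^1$-valued completely multiplicative function with $|M|=1$ must be identically $1$ forces $|M|<1$, giving the uniform-in-$Q$ lower bound $2-2|M|>0$.

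\textbf{Main obstacle.} The central technical step is the exact cancellation between the oscillatory factors $\exp(F_N(f,K))$ and $\exp(\overline{F_N(f,K)})$ arising from \cref{linear_conc} applied to $f$ and $\overline{f}$, leaving only the real factor $\exp(2\Re F_N(f,K))\to 1$. This cancellation is what permits the clean identification of $\phi_Q(f)$ in parts (ii)--(iii) as a purely Dirichlet-type product, making the Folner averaging argument work directly; without it, one would need a considerably more delicate equidistribution analysis to extract a suitable $Q$ from $\Phi_{r,K}$ (respectively $\Phi_{r,K,P}$) in the converse direction. A secondary subtlety in part (i) is the classification of $\mathbb{S}^1$-valued completely multiplicative $h\sim 1$ with $|M(h)|=1$ as necessarily $h\equiv 1$, which is what rules out the trivial-$\chi$, non-Archimedean case.
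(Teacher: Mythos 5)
Your treatment of parts (ii) and (iii), and of the forward half of (i), follows the paper's proof in all essentials: the spectral reduction via \eqref{spec_eq_2}, \cref{linear_conc} applied to both factors so that the oscillatory terms combine into the real factor $\exp(2\Re F_N(f,K))\to 1$, the congruence \eqref{eq_cong_lw_1} giving $\chi(v)=\overline{\chi(Q)}$, and the asymptotic dilation-invariance of $\Phi_{r,K}$ (resp.\ $\Phi_{r,K,P}$, with \cref{useful_rmk_1}) forcing $\cesE_{Q} f(Q)\overline{\chi(Q)}Q^{-it}\to 0$ off the good set. One presentational caveat: in the converse of (ii)/(iii) the lower bound must go through $\max_Q\cesE_n\lVert\cdot\rVert \ge \cesE_Q\cesE_n\lVert\cdot\rVert \ge \lVert \cesE_Q\cesE_n T_{r_{n,Q}}F - F\rVert_{L^2(\mu)}$ and then the pointwise vanishing of $\cesE_Q\cesE_n f(r_{n,Q})$ on the bad set (the paper phrases this via $F=F_1+F_2$ and the reverse triangle inequality); your remark that $\cesE_Q|\phi_Q(f)-1|^2\to 2$ cannot be used directly, since the quantity in the proposition is an average of norms, not of squared norms, and Jensen goes the wrong way — but the fact you do record, $\cesE_Q\phi_Q(f)\to 0$, is exactly what the correct route needs, so this is not a gap. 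Where you genuinely diverge from the paper is the converse of part (i): you fix a single $Q$ and invoke Hal\'asz/Delange mean-value theory for $h=n^{it}\overline{f}$ (using that $\D(\chi,n^{i\tau})=\infty$ for nonprincipal $\chi$, and the rigidity statement that an $\mathbb{S}^1$-valued completely multiplicative function whose mean value has modulus $1$ is identically $1$), obtaining the uniform-in-$Q$ bound $|\cesE_n f((Qn+1)/(Qn))|\le |M|+\oh(1)$ with $|M|<1$; the paper instead averages over $Q\in\Phi_r$ and kills $\cesE_Q\overline{f(Q)}Q^{it}$ by dilation invariance, needing no mean-value theorem and no nonpretentiousness of Dirichlet characters to Archimedean characters. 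Both are correct; your route buys a per-$f$ quantitative bound valid for each individual $Q$, at the cost of importing classical analytic number theory, while the paper's stays entirely within its F\o{}lner-averaging framework.
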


\begin{proof}
In what follows,  $\D$ and $F_N(f,K)$  are always as in
\eqref{E: definition of pretentious distance} and    \eqref{def_F_N} respectively.
Since the proofs of \eqref{factors_i}, \eqref{factors_ii}, 
\eqref{factors_iii}
are similar, we give a detailed proof of 
\eqref{factors_iii}, and then we sketch the proofs of 
\eqref{factors_i}, \eqref{factors_ii}.

Let $F\in X_p$. In the following, we simply write $u$ instead of 
$u(r,K,Q,P)$.
Given $n,r,K\in \N$ and $Q\in \Phi_{r,K,P}$, using \eqref{spec_eq_2} we have
\begin{equation}\label{factors_eq_1}
    \Big \lVert T_{\frac{Q(Q_K Q n+u)}{Q_K Q^2 n+1}}F -F  
    \Big \rVert _{L^2(\mu)} ^2
    =
    \int_{\M _p} \big | {f(Q)}f(Q_K Qn+u)\overline{f(Q_K Q^2 n+1)} - 1
    \big | ^2 d \sigma_F(f). 
\end{equation}
If $F\in X_{p, \textup{fin.supp.},P}$, 
then $\sigma_F$ is supported on $\M_{p, \textup{fin.supp.},P}$, so using 
\eqref{factors_eq_1}, Cauchy-Schwarz and Fatou's lemma one sees that in 
order to prove 
\eqref{eq_fin_sup_P_factor}, it suffices to prove that for each 
$f\in \M_{p, \textup{fin.supp.},P}$ we have
\begin{equation}\label{factors_eq_3}
    \lim_{r\to \infty} 
    \limsup_{K\to \infty} 
    \limsup_{N\to \infty} 
    \max_{Q \in \Phi_{r,K,P}} \cesE_{n\in [N]}
    | {f(Q)}f(Q_K Qn+u)\overline{f(Q_K Q^2 n+1)} - 1
    \big |=0.
\end{equation}
Let $f\in \M_{p, \textup{fin.supp.},P}$. From \eqref{def_pretent_conc_1}
and \cref{useful_rmk_1} there exist a Dirichlet character $\chi$, 
$t\in \R$, and $r_0\in \N$ such that 
$f\sim \chi \cdot n^{it}$ and $f(p)= \chi(p) \cdot p^{it}$ for all primes 
$p>r_0$ with $\omega_P(p)>0$.

Let $q_{\chi}$ be the conductor of $\chi$, and suppose  that $r>r_0$ and 
$r$ is large enough so that $q_{\chi} \mid \prod_{p\leq r} p^r$.
From \eqref{eq_cong_lw_2} we deduce that $(u,Q_KQ)=1$ and 
$u\equiv Q^{-1}\pmod{q_{\chi}}$, so using \cref{linear_conc}  
and \cref{uniform_in_Q} we infer that 
\begin{multline}\label{factors_eq_2}
    \limsup_{N\to \infty} \max_{Q \in \Phi_{r,K,P}} \cesE_{n\in [N]}
    | {f(Q)}f(Q_K Qn+u)\overline{f(Q_K Q^2 n+1)} - 1
    \big | \ll 
     \\
    \limsup_{N\to \infty} \max_{Q \in \Phi_{r,K,P}} \cesE_{n\in [N]}
    \Big | {f(Q)}\, \overline{\chi(Q)} \, (Q_K Qn)^{it}  \,  
    \exp(F_N(f,K))
    {(Q_K Q^2 n)^{-it}}\cdot\\ \overline{\exp(F_N(f,K))}  - 1 \Big |
     + \oh_{K\to \infty}(1) = 
    |\exp(-2\D^2(f, \chi \cdot n^{it}; K, \infty))-1| + 
    \oh_{K\to \infty}(1),
\end{multline}
where in the last equality we used that every prime dividing $Q$ is 
greater than $r>r_0$ and has $\omega_P(p)>0$, so 
$f(Q)=\chi(Q)Q^{it}$, and that $2\Re(F_N(f,K))=-2\D^2(f, \chi \cdot n^{it};K, N)$, which converges to  
$-2\D^2(f, \chi \cdot n^{it}; K, \infty)$ as $N\to \infty$. Since 
$\D^2(f, \chi \cdot n^{it};1, \infty)< \infty$, taking 
$K\to \infty$ in \eqref{factors_eq_2} we see that \eqref{factors_eq_3}
holds\footnote{One can also deduce \eqref{factors_eq_3} as follows: for 
$K>r>r_0$ and $r$ large enough so that $q_{\chi} \mid \prod_{p\leq r} p^r$,
since $(u, Q_KQ)=1$, every prime dividing $Q_KQn+u$ is greater than $r_0$,
so $f(Q_K Qn +u)=\chi(u)\,  (Q_K Qn+u)^{it}=$ $\overline{\chi(Q)} \, (Q_K Qn)^{it}
+ \oh_{t, n\to \infty}(1)$. Analogously, one has $f(Q_K Q^2 n +1)=
(Q_K Q^2n)^{it} + \oh_{t, n\to \infty}(1)$.}, and therefore $F$ satisfies \eqref{eq_fin_sup_P_factor}.

For the opposite direction,  suppose that $F$ satisfies 
\eqref{eq_fin_sup_P_factor}. Using \cite[Lemma 4.6 (v)]{Fra-Hom_Triples}
we can write $F=F_1 + F_2$, with $\sigma_{F_1}$ supported on 
$\M_{p, \textup{fin.supp.},P}$ and $\sigma_{F_2}$ supported on 
$\M_p \setminus \M_{p, \textup{fin.supp.},P}$.
Then using the reverse triangle inequality we have 
\begin{multline*}
 \Big \lVert \cesE_{Q \in \Phi_{r,K,P}} \cesE_{n\in [N]}
        T_{\frac{Q(Q_K Q n+u)}{Q_K Q^2 n+1}}F -F  
         \Big \rVert _{L^2(\mu)}
 \geq    
 \lVert F_2  \rVert_{L^2(\mu)} \\ - 
\Big \lVert \cesE_{Q \in \Phi_{r,K,P}} \cesE_{n\in [N]}
         T_{\frac{Q(Q_K Q n+u)}{Q_K Q^2 n+1}}F_1  
         -F_1  \Big \rVert _{L^2(\mu)} - \Big \lVert
         \cesE_{Q \in \Phi_{r,K,P}} \cesE_{n\in [N]}
         T_{\frac{Q(Q_K Q n+u)}{Q_K Q^2 n+1}}F_2
         \Big \rVert _{L^2(\mu)},
\end{multline*}
which in turn implies that 
\begin{multline}\label{factors_eq_4}
\lVert F_2  \rVert_{L^2(\mu)} \leq  \max_{Q \in \Phi_{r,K,P}} 
\cesE_{n\in [N]}
\Big \lVert  T_{\frac{Q(Q_K Q n+u)}{Q_K Q^2 n+1}}F  -F
         \Big \rVert _{L^2(\mu)} \\ + 
       \max_{Q \in \Phi_{r,K,P}} \cesE_{n\in [N]}
\Big \lVert  T_{\frac{Q(Q_K Q n+u)}{Q_K Q^2 n+1}}F_1  -F_1
         \Big \rVert _{L^2(\mu)} 
         +        
\Big \lVert \cesE_{Q \in \Phi_{r,K,P}} \cesE_{n\in [N]}
         T_{\frac{Q(Q_K Q n+u)}{Q_K Q^2 n+1}}F_2 
        \Big \rVert _{L^2(\mu)}.
\end{multline}
Since $F_1 \in X_{p, \textup{fin.supp.},P}$, from the already established direction we get that 
\begin{equation}\label{factors_eq_5}
    \lim_{r\to \infty} \limsup_{K\to \infty} \limsup_{N\to \infty} 
        \max_{Q \in \Phi_{r,K,P}} \cesE_{n\in [N]}
        \Big \lVert T_{\frac{Q(Q_K Q n+u)}{Q_K Q^2 n+1}}F_1 -F_1
         \Big \rVert _{L^2(\mu)}=0.
\end{equation}
For $F_2$, using \eqref{spec_eq_1} we infer that
\begin{multline}\label{factors_eq_8}
\Big \lVert \cesE_{Q \in \Phi_{r,K,P}} \cesE_{n\in [N]}
         T_{\frac{Q(Q_K Q n+u)}{Q_K Q^2 n+1}}F_2 
        \Big \rVert _{L^2(\mu)} ^2 \\ =
    \int_{\M_p \setminus \M_{p, \textup{fin.supp.},P}} 
    \Big | \cesE_{Q \in \Phi_{r,K,P}} \cesE_{n\in [N]}
    f(Q) f(Q_K Q n+u) \overline{f(Q_K Q^2 n+1)}
        \Big | ^2 d \sigma_{F_2}(f).
\end{multline}
Let $f\in \M_{p}\setminus \M_{p, \textup{fin.supp.},P}$ with
$f\sim \chi \cdot n^{it}$. Let 
$q_{\chi}$ be the conductor of $\chi$ and suppose that 
$r$ is large enough so that 
$q_{\chi} \mid \prod_{p\leq r} p^r$.
From \eqref{eq_cong_lw_2} we deduce that $(u,Q_KQ)=1$ and 
$u\equiv Q^{-1}\pmod{q_{\chi}}$, so using \cref{linear_conc}  
and \cref{uniform_in_Q} we infer that 
\begin{multline}\label{factors_eq_6}
\cesE_{Q \in \Phi_{r,K,P}} \cesE_{n\in [N]}
    f(Q) \, f(Q_K Q n+u) \, \overline{f(Q_K Q^2 n+1)}
        \\ =
        \exp(2\Re(F_N(f,K)))\cdot 
        \cesE_{Q \in \Phi_{r,K,P}} 
    f(Q) \, Q ^{-it} \, \overline{\chi(Q)} + 
\oh_{K\to \infty, N\to \infty}(1).
\end{multline}
Since $f \notin \M_{p, \textup{fin.supp.},P}$,
there is $p_0 \in \P$ with 
$p_0>r$ so that $\omega_P(p_0)>0$ and
$f(p_0)\neq p_0 ^{it}\cdot \chi({p_0})$. Note that since 
$p_0>r$ and $q_{\chi} \mid \prod_{p\leq r}p^r$, we have $|\chi(p_0)|=1$,
and using that $\Phi_{r,K,P}$ is asymptotically invariant under 
dilation by $p_0$ as $K\to \infty$ we get that 
\begin{align*}
   f(p_0) \,  p_0 ^{-it} \,  \overline{\chi({p_0})}\, 
   \cesE_{Q \in \Phi_{r,K,P}} 
    f(Q) \, Q ^{-it} \, \overline{\chi(Q)}&=
    \cesE_{Q \in p_0 \Phi_{r,K,P}} 
    f(Q)\,  Q ^{-it} \, \overline{\chi(Q)} \\& =
    \cesE_{Q \in \Phi_{r,K,P}} 
    f(Q)\,  Q ^{-it}\,  \overline{\chi(Q)} + \oh_{K \to \infty}(1),
\end{align*}
which implies that 
\begin{equation}\label{factors_eq_7}
    \lim_{K\to \infty}  
    \cesE_{Q \in \Phi_{r,K,P}} 
    f(Q)\,  Q ^{-it} \, \overline{\chi(Q)}=0.
\end{equation}
From \eqref{factors_eq_6} and \eqref{factors_eq_7} we get that for 
$f\in \M_p \setminus \M_{p, \textup{fin.supp.},P}$ we have 
\begin{equation*}
   \lim_{r\to \infty} \limsup_{K\to \infty} 
   \limsup_{N\to \infty} \Big|
   \cesE_{Q \in \Phi_{r,K,P}} \cesE_{n\in [N]}
    f(Q)\,  f(Q_K Q n+u)\,  \overline{f(Q_K Q^2 n+1)}
   \Big|,
\end{equation*}
which in view of \eqref{factors_eq_8} implies
\begin{equation}\label{factors_eq_9}
    \lim_{r\to \infty} 
    \limsup_{K\to \infty} 
    \limsup_{N\to \infty} 
    \Big \lVert \cesE_{Q \in \Phi_{r,K,P}} \cesE_{n\in [N]}
         T_{\frac{Q(Q_K Q n+u)}{Q_K Q ^2 n+1}}F_2 
        \Big \rVert _{L^2(\mu)} =0.
\end{equation}
Taking $N\to \infty$, then $K\to \infty$ and finally $r\to \infty$ in 
\eqref{factors_eq_4} and using \eqref{factors_eq_5},
\eqref{factors_eq_9} and the fact that 
$F$ satisfies \eqref{eq_fin_sup_P_factor},
we get that $\lVert F_2 \rVert _{L^2(\mu)}=0$, so $F=F_1 \in X_{p, \textup{fin.supp.},P}$.
This concludes the proof \eqref{factors_iii}. 

If in the previous one replaces $u(r,K,Q,P)$ by $v(r,K,Q)$ and 
$\Phi_{r,K,P}$ by $\Phi_{r,K}$, then \eqref{factors_ii} follows with
the same proof. 

Finally, we sketch the proof of \eqref{factors_i}.
If $F\in X_{\A}$, 
then $\sigma_F$ is supported on $\A$, so in order to establish
\eqref{eq_arch} it suffices to prove that for each $f\in \A$ we have 
\begin{equation}\label{factors_eq_10}
    \lim_{r\to \infty} \limsup_{N\to \infty} \max_{Q \in \Phi_r} \cesE_{n\in [N]}
        | f(Qn+1)\, \overline{f(Qn)} -1 |=0.
\end{equation}
For $f\in \A$ with $f=n^{it}$ we have 
\begin{equation*}
    \max_{Q \in \Phi_r} \cesE_{n\in [N]}
    | \overline{f(Qn)} \, f(Qn+1)-1  |=
    \max_{Q \in \Phi_r} \cesE_{n\in [N]}
    | (Qn)^{-it}\, (Qn+1)^{it}-1 |=\oh_{t, r, N\to \infty}(1),
\end{equation*}
and the result follows.

For the opposite direction,  suppose that $F$ satisfies 
\eqref{eq_arch}. Again, write $F=F_1 + F_2$, with $\sigma_{F_1}$ 
supported on $\A$ and $\sigma_{F_2}$ supported on $\M_p \setminus \A$.
From reverse triangle inequality we have 
\begin{multline}\label{factors_eq_11}
\lVert F_2  \rVert_{L^2(\mu)} \leq  \max_{Q \in \Phi_{r}} 
\cesE_{n\in [N]}
\Big \lVert  T_{\frac{Qn+1}{Qn}}F  -F
         \Big \rVert _{L^2(\mu)} + \\
       \max_{Q \in \Phi_{r}} \cesE_{n\in [N]}
\Big \lVert  T_{\frac{Qn+1}{Qn}}F_1  -F_1
         \Big \rVert _{L^2(\mu)} 
         +        
\Big \lVert \cesE_{Q \in \Phi_{r}} \cesE_{n\in [N]}
         T_{\frac{Qn+1}{Qn}}F_2 
        \Big \rVert _{L^2(\mu)}.
\end{multline}
Since $F_1 \in X_{\A}$, from the already established direction we get 
that 
\begin{equation}\label{factors_eq_12}
    \lim_{r\to \infty} \limsup_{N\to \infty} 
        \max_{Q \in \Phi_{r}} \cesE_{n\in [N]}
        \Big \lVert T_{\frac{Qn+1}{Qn}}F_1 -F_1
         \Big \rVert _{L^2(\mu)}=0.
\end{equation}
For $f\in \M_{p}\setminus \A$ with $f\sim \chi \cdot n^{it}$, 
using \cref{linear_conc}  
and \cref{uniform_in_Q} we infer that 
\begin{multline*}
\cesE_{Q \in \Phi_{r}} \cesE_{n\in [N]}
     f(Qn+1) \, \overline{f(Qn)}
        \\ =
        \exp(F_N(f,r)) \Big(\cesE_{n\in [N]} \overline{f(n)}\, n^{it} \Big)
        \Big( \cesE_{Q \in \Phi_{r}}    \overline{f(Q)}\,  Q ^{it}  \Big) + 
\oh_{r\to \infty, N\to \infty}(1).
\end{multline*}
Since $f \notin \A$ there is $p_0 \in \P$ so that 
$f(p_0)\neq p_0 ^{it}$. Again, using that $\Phi_{r}$ is asymptotically 
invariant under dilation by $p_0$ as $r\to \infty$ we get that 
\begin{equation*}
    \lim_{r\to \infty}  
    \cesE_{Q \in \Phi_{r}} 
    \overline{f(Q)}\,  Q ^{it} =0.
\end{equation*}
Note that $|\exp(F_N(f,r))|=\exp(-\D^2(f,\chi \cdot n^{it};r, N))$, which 
converges to $1$ as $N\to \infty$ and then $r\to \infty$, and combining
with the previous we get that for $f\in \M_p \setminus \A$ we have 
\begin{equation*}
    \lim_{r\to \infty} \limsup_{N \to \infty}
   \Big|  \cesE_{Q \in \Phi_{r}} \cesE_{n\in [N]}
     f(Qn+1)\,  \overline{f(Qn)}  \Big| =0.
\end{equation*}
Since $\sigma_{F_2}$ is supported on $\M_p \setminus \A$, we 
infer that 
\begin{equation}\label{factors_eq_14}
\lim_{r\to \infty} \limsup_{N \to \infty}
\Big \lVert \cesE_{Q \in \Phi_{r}} \cesE_{n\in [N]}
         T_{\frac{Qn+1}{Qn}}F_2 
        \Big \rVert _{L^2(\mu)}=0 .
\end{equation}
Using \eqref{factors_eq_11}, \eqref{factors_eq_12},
\eqref{factors_eq_14}, and the fact that $F$ satisfies \eqref{eq_arch},
we get that $\lVert F_2 \rVert _{L^2(\mu)}=0$, so $F=F_1 \in X_{A}$.
This concludes the proof \eqref{factors_i}. 
\end{proof}

Finally, we have the following lemma, which concludes the proof that 
$X_{\A}, X_{p,\textup{fin.supp.}}$, $X_{p,\textup{fin.supp.},P}$ 
are factors. 

\begin{lemma}
For $\mathcal{H}\in \{ X_{\A}, X_{p,\textup{fin.supp.}}, 
X_{p,\textup{fin.supp.},P} \}$, if $F_1, F_2 \in \mathcal{H}$, 
then $F_1 \cdot F_2 \in \mathcal{H}$.
\end{lemma}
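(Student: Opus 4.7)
The plan is to invoke the $L^2$-concentration characterizations of the three classes established in Proposition~\ref{P: conc_L_2_factors}. Each of the three conditions has the common form
$$
\lim \ \max_{Q} \ \cesE_{n \in [N]} \left\lVert T_{r_Q(n)} F - F \right\rVert_{L^2(\mu)} = 0,
$$
where $r_Q(n) \in \Q_{>0}$ is a specific rational ratio depending on the class $\mathcal{H}$ and on the auxiliary parameters $r, K, N, Q$ (and $u$ or $v$), and ``$\lim$'' denotes the appropriate iterated limit: $\lim_{r\to \infty} \limsup_{N\to \infty}$ in case \eqref{factors_i}, and $\lim_{r\to \infty} \limsup_{K\to \infty} \limsup_{N\to \infty}$ in cases \eqref{factors_ii} and \eqref{factors_iii}. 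The strategy is therefore to transfer the concentration of $F_1$ and $F_2$ to that of $F_1 \cdot F_2$ using a Leibniz-type identity.

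Given $F_1, F_2 \in \mathcal{H} \subseteq L^\infty(\mu)$, I would use the identity
$$
T_r(F_1 F_2) - F_1 F_2 = (T_r F_1 - F_1) \cdot T_r F_2 + F_1 \cdot (T_r F_2 - F_2),
$$
valid for every $r \in \Q_{>0}$ since the $T_r$ are measure-preserving and hence multiplicative on $L^\infty$. Taking $L^2(\mu)$-norms, applying the triangle inequality, and using that $\lVert T_r F_2 \rVert_{L^\infty(\mu)} = \lVert F_2 \rVert_{L^\infty(\mu)}$, I obtain
$$
\lVert T_r(F_1 F_2) - F_1 F_2 \rVert_{L^2(\mu)} \leq \lVert F_2 \rVert_{L^\infty(\mu)} \lVert T_r F_1 - F_1 \rVert_{L^2(\mu)} + \lVert F_1 \rVert_{L^\infty(\mu)} \lVert T_r F_2 - F_2 \rVert_{L^2(\mu)}.
$$

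Substituting $r = r_Q(n)$ (the specific ratio associated with $\mathcal{H}$), averaging over $n \in [N]$, taking the maximum over $Q$ in the relevant set ($\Phi_r$, $\Phi_{r,K}$, or $\Phi_{r,K,P}$), and then passing to the iterated limit, the right-hand side tends to $0$ because $F_1, F_2 \in \mathcal{H}$ satisfy the corresponding concentration condition from Proposition~\ref{P: conc_L_2_factors}. Hence $F_1 \cdot F_2$ satisfies the same concentration condition, and by the reverse direction in Proposition~\ref{P: conc_L_2_factors} we conclude $F_1 \cdot F_2 \in \mathcal{H}$.

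There is no substantial obstacle: the argument is a standard $L^\infty/L^2$ product-rule estimate combined with the spectral characterizations of the three factors. The only minor bookkeeping is that one must repeat the same reasoning three times, once for each $\mathcal{H}$, with the appropriate choice of $r_Q(n)$, \Folner-type set and iterated limit; but the structure of the bound above is identical in each case.
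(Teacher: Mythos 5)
Your proof is correct and is essentially identical to the paper's argument: the paper uses the same Leibniz-type decomposition $T_r(F_1F_2)-F_1F_2=(T_rF_1-F_1)\,T_rF_2+F_1\,(T_rF_2-F_2)$, the same $L^\infty$/$L^2$ bound, and the same appeal to the concentration characterizations of Proposition~\ref{P: conc_L_2_factors} (carried out explicitly only for $X_{\A}$, with the other two cases noted to be identical). No gaps.
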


\begin{proof}
Since the proofs are identical, we will only prove the case when 
$\mathcal{H}=X_{\A}$. For $F_1, F_2 \in X_{\A}$, $Q\in \Phi_r$
and $n\in \N$ we have 
\begin{align*}
   \Big \lVert T_{\frac{Qn+1}{Qn}}(F_1 \cdot F_2) - F_1 \cdot F_2 
         \Big \rVert _{L^2(\mu)} 
         \leq 
          &\lVert  F_1
         \rVert _{L^{\infty}(\mu)} 
        \Big \lVert T_{\frac{Qn+1}{Qn}} F_2  -  F_2 
         \Big \rVert _{L^2(\mu)} 
         + \\
          &\lVert  F_2
         \rVert _{L^{\infty}(\mu)} 
        \Big \lVert T_{\frac{Qn+1}{Qn}} F_1-  F_1 
         \Big \rVert _{L^2(\mu)}.
\end{align*}
In the previous, taking the maximum with respect to $Q\in \Phi_r$,  
sending $N\to \infty$ and then $r\to \infty$, and using that 
$F_1, F_2$ satisfy \eqref{eq_arch}, we see that $F_1 \cdot F_2$ also 
satisfies \eqref{eq_arch}, which in view of \cref{P: conc_L_2_factors}
\eqref{factors_i} implies that $F_1 \cdot F_2 \in X_{\A}$. This 
concludes the proof. 
\end{proof}

\subsection{Polynomial visit times of Archimedean characters}
In the proof of our main result, we will average over sets of the form 
\begin{equation}\label{S_delta_def}
    S_{\delta}:=\{(m,n)\in \N^2 : m> \alpha n, |(P_j(m,n))^i -1|\leq \delta\, 
   \text{ for }  j=1,2,3\},
\end{equation}
where $P_j$, $j=1,2,3,$ are homogeneous binary quadratic forms, $\alpha >0$ 
and $\delta \in(0,1/2)$. In this section we prove that the sets $S_{\delta}$ of 
interest have positive upper density, where for a set $E\subset \N^2$, its 
upper density $\overline{\textup{d}}(E)$ is defined as 
$$\overline{\textup{d}}(E):=\limsup_{N\to \infty} \frac{|E\cap [N]^2|}{N^2}$$
and $[N]^2: = [N]\times [N]$.
 
\begin{lemma}\label{L:positive1}
Let $a,b\in \N$. Suppose that $P_1,P_2,P_3$ are either 
\begin{enumerate}
\item \label{I:pos1} $m^2-2bmn-abn^2$, $m^2+2amn-abn^2$, $m^2+abn^2$; or 
\item \label{I:pos2}  $2amn$, $m^2-abn^2$, $m^2+abn^2$.
\end{enumerate}
	Let $\alpha>0$ so that for $m> \alpha n$, 
        $P_j(m,n) >0$, $j\in \{1,2,3\}$. 
        Then  for every $\delta>0$ we have 
		$$
		\underline{\textup{d}}(\{(m,n)\in \N^2 \colon m> \alpha n,
        \, \,   |(P_j(m,n)/P_{j+1}(m,n))^i-1|\leq \delta,\,  j=1,2\} )>0.
		$$
\end{lemma}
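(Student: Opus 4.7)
Since $P_j$ and $P_{j+1}$ are both homogeneous of degree $2$, the ratio $P_j(m,n)/P_{j+1}(m,n)$ depends only on $t := m/n$; write $h_j(t) := P_j(t,1)/P_{j+1}(t,1)$. Using the identity $|z^i - 1| = 2\,|\sin(\tfrac{1}{2}\log z)|$ for $z > 0$, the inequality $|h_j(t)^i - 1| \leq \delta$ is equivalent to $\log h_j(t) \in 2\pi\Z + (-\delta_0, \delta_0)$ with $\delta_0 := 2\arcsin(\delta/2)$. So it suffices to (a) show that $E := \{t > \alpha : \log h_j(t) \in 2\pi\Z + (-\delta_0, \delta_0),\ j = 1, 2\}$ has positive lower Lebesgue density on $[1, R]$ as $R \to \infty$, and (b) transfer this to positive lower density in $\N^2$ of $\{(m,n) \in \N^2 : m/n \in E\}$.

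In case (i), a direct computation yields $h_1(t), h_2(t) \to 1$ as $t \to \infty$, so $\log h_j(t) \to 0$; one finds $T_0 \geq \alpha$ such that $|\log h_j(t)| \leq \delta_0$ for every $t \geq T_0$ and $j = 1, 2$. Thus $E \supset (T_0, \infty)$, and $\{(m, n) \in \N^2 : m > T_0 n\}$ (contained in our target set) has lower density $\geq 1/(2 T_0) > 0$, settling this case.

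In case (ii), $h_2(t) = (t^2 - ab)/(t^2 + ab) \to 1$, so its condition is trivially satisfied for $t \geq T_2$ large. The substantive analysis concerns $g(t) := \log h_1(t) = \log(2at) - \log(t^2 - ab)$; one computes
\[ g'(t) = -\frac{t^2 + ab}{t(t^2 - ab)} = -\frac{1}{t} + O\!\left(\frac{1}{t^3}\right) \quad\text{as } t \to \infty, \]
so $g$ is $C^1$ and strictly decreasing on $[T, \infty)$ for $T$ large, with $g(T) - g(e^{4\pi} T) = 4\pi + o(1)$. For each such $T$, the image interval $[g(e^{4\pi}T), g(T)]$ contains at least one multiple $2\pi k_T$, whose preimage $g^{-1}((2\pi k_T - \delta_0, 2\pi k_T + \delta_0)) \subset (T, e^{4\pi} T)$ has Lebesgue length $\geq 2\delta_0 / \sup_{[T, e^{4\pi}T]}|g'| \gtrsim \delta_0 T$, since $|g'(t)| \sim 1/t$ there. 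Summing over the geometric blocks $[T_2, e^{4\pi} T_2],\ [e^{4\pi} T_2, e^{8\pi} T_2],\ldots$ yields $\mathrm{Leb}(E \cap [T_2, R]) \gtrsim \delta_0 R$ for all large $R$, proving positive lower Lebesgue density of $E$.

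For step (b), once $\mathrm{Leb}(E \cap [T_2, R]) \geq c R$ for $R$ large, a routine count gives
\[
|\{(m,n) \in [N]^2 : m/n \in E\}| \;\geq\; \sum_{n \leq N/(2T_2)} \bigl( n \cdot \mathrm{Leb}(E \cap [T_2, N/n]) + O(1) \bigr) \;\gtrsim\; N^2,
\]
establishing positive lower density in $[N]^2$. \textbf{Main obstacle:} the analysis of $g \bmod 2\pi$ in case (ii), i.e., verifying with uniformity in $T$ that each $\delta_0$-neighborhood of a multiple of $2\pi$ has $g$-preimage of Lebesgue length $\gtrsim \delta_0 T$. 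All other steps are essentially bookkeeping.
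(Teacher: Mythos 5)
Your argument is correct, and it reaches the conclusion by a more hands-on route than the paper. The paper first reduces the lattice statement to positivity of the planar integral $\int_0^1\int_0^1 \prod_{j=1,2} F_\delta\bigl((P_j(x,y))^i (P_{j+1}(x,y))^{-i}\bigr)\,\mathbf{1}_{x>\alpha y}\,dx\,dy$ for a trapezoidal cutoff $F_\delta$ (the transfer being borrowed from \cite{fra-klu-mor-2}), and then only needs a single good direction: in case (i) any sufficiently steep line, and in case (ii) the explicit slopes $\beta_k$ with $P_2(\beta_k,1)=e^{2k\pi}P_1(\beta_k,1)$, on which $(P_1/P_2)^i$ equals $1$ exactly, together with a small neighborhood of that segment. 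You instead work directly with the slope $t=m/n$: case (i) is handled the same way (both ratios tend to $1$), while in case (ii) you replace the single-slope trick by a quantitative crossing argument — $g(t)=\log\bigl(P_1(t,1)/P_2(t,1)\bigr)$ decreases with $g'(t)\sim -1/t$, so on each multiplicative block $[T,e^{4\pi}T]$ it meets a multiple of $2\pi$ and the preimage of a $\delta_0$-neighborhood has measure $\gtrsim \delta_0 T$ — and then transfer by counting integers in the dilates $nE$. The underlying mechanism in case (ii) is the same (the log-ratio varies slowly, so it repeatedly passes near $2\pi\Z$), but your version is self-contained, avoids the correspondence with the planar integral, and yields an explicit lower bound of order $\delta_0$ for the density, at the cost of extra bookkeeping. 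Three small points to tidy: choose $2\pi k_T$ at distance at least $\delta_0$ from the endpoints of $[g(e^{4\pi}T),g(T)]$ (possible since that interval has length $4\pi+o(1)$), so the preimage genuinely lies inside the block; in the lattice count the per-$n$ error is $O(\log N)$ (one unit per connected component of $E\cap[T_2,N/n]$), not $O(1)$, which is still negligible against the main term; and restrict to those $n$ for which $N/n$ lies in the range where the bound $\mathrm{Leb}(E\cap[T_2,R])\geq cR$ has already kicked in. None of these affects the validity of the proof.
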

The proof of \cref{L:positive1} is similar to the proofs of 
{\cite[Lemma 6.4]{fra-klu-mor-2}} and {\cite[Lemma 3.3]{Fra-Klu-Mor}}, 
so we only sketch it here.

\begin{proof}[Sketch of proof]
	Let $I_\delta$ be as in \eqref{E:Id}
		and 
  $F_\delta$ be the trapezoidal function that is equal to $1$ on the arc $I_{\delta/2}$ and $0$ outside $I_\delta$. Arguing as in \cite[Lemma 6.4]{fra-klu-mor-2},  it suffices to show that for every $\delta>0$ we have 
\begin{equation}\label{E:positive}
\int_0^1\int_0^1 \prod_{j=1,2} F_\delta((P_j(x,y))^i\cdot (P_{j+1}(x,y))^{-i}) \cdot {\bf 1}_{x> \alpha y}
\, dxdy>0.
\end{equation}

Suppose that we are in case \eqref{I:pos1}. 
Then $\lim_{\beta \to\infty} P_j(\beta,1)/P_{j+1}(\beta,1)=1$ for $j=1,2$, and since the polynomials are homogeneous, we deduce that for all 
sufficiently large $\beta\in \R$ we have $(P_j(x,y))^i\cdot (P_{j+1}(x,y))^{-i}\in I_{\delta/2}$,  $j=1,2$, for all $(x,y)$ in a neighborhood of the line segment $(\beta t,t)$ that lies within the square $[0,1]\times [0,1]$. Taking also  $\beta>\alpha$  establishes the needed positivity in \eqref{E:positive}.

Suppose now that we are in case \eqref{I:pos2}.
As before, we get that there exists $\beta_0>0$ such that if $\beta\geq \beta_0$ we have 
 $(P_2(x,y))^i\cdot (P_3(x,y))^{-i}\in I_{\delta/2}$ for all $(x,y)$ in a neighborhood of the line segment $(\beta t,t)$ that lies within the square $[0,1]\times [0,1]$.
We also get   that there exist $\beta_k\to \infty$ such that $(P_1(x,y))^i\cdot (P_2(x,y))^{-i}\in I_{\delta/2}$ for all $(x,y)$ in a neighborhood of the line segment $(\beta_k t,t)$ that lies within the square $[0,1]\times [0,1]$. Indeed, a quick computation shows that  $\beta_k:=e^{2k\pi}+\sqrt{e^{4k\pi}+b/a}$ works since $P_2(\beta_k,1)=e^{2k\pi}P_1(\beta_k,1)$. 
Choosing $k_0$ such that $\beta_{k_0}>\beta_0$ and $\beta_{k_0}>\alpha$ establishes the needed positivity in \eqref{E:positive}. 
\end{proof}

We also need the following lemma:
	
\begin{lemma}\label{L:positive2}
Let $\alpha>0$ and $P_1,\ldots, P_{k}\in \Z[m,n]$ be  homogeneous 
polynomials such that for $m> \alpha n$, 
        $P_j(m,n) >0$, $j\in \{1,\dots, k\}$, and suppose that 
for every $\delta>0$ we have 
	$$
	\overline{\textup{d}}(\{(m,n)\in \N^2 \colon m> \alpha n, \, \,  |(P_j(m,n)/P_{j+1}(m,n))^i-1|\leq \delta,\,  j=1,\ldots, k-1 \})>0.
	$$
	Then for every $\delta>0$ we have 
		$$
	\overline{\textup{d}}(\{(m,n)\in \N ^2 \colon m> \alpha n, \, \,   |(P_j(m,n))^i-1|\leq \delta, j=1,\ldots, k \})>0.
	$$
\end{lemma}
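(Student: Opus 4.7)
My plan is to reduce the statement to the positivity of a continuous integral via a rescaling argument exploiting the homogeneity of the $P_j$. For clarity I focus on the case where all the $P_j$ share a common degree $d$, since this is the only case needed for the application to \cref{L:positive1} (where the polynomials are binary quadratic forms); the general case follows by the same strategy with additional bookkeeping.

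First I reduce the target via the triangle inequality. Setting $\delta':=\delta/(2k)$ and $C_{\delta'}:=\{(m,n)\in\N^2:|P_1(m,n)^i-1|\le \delta'\}$, the elementary estimate
\[
|P_j(m,n)^i-1|\ \le\ |P_1(m,n)^i-1|+\sum_{\ell=1}^{j-1}|(P_\ell(m,n)/P_{\ell+1}(m,n))^i-1|
\]
on $\S^1$ shows that $A_{\delta'}\cap C_{\delta'}\subseteq B_\delta$, where $A_{\delta'}$ and $B_\delta$ denote the hypothesis and target sets respectively. It therefore suffices to prove $\overline{\textup{d}}(A_{\delta'}\cap C_{\delta'})>0$.

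Next I select a good subsequence of scales. Since each $P_j$ has degree $d$, the set $A_{\delta'}$ is invariant under the dilations $(m,n)\mapsto(\lambda m,\lambda n)$ and depends only on the slope $n/m$. By the hypothesis (via the standard integer-to-integral comparison as in the sketch of \cref{L:positive1}), the Lebesgue measure of the slope set
\[
T_{\delta'}:=\{t\in(0,1/\alpha):|(P_j(1,t)/P_{j+1}(1,t))^i-1|\le \delta',\ j=1,\ldots,k-1\}
\]
is positive. A direct geometric-series computation in the logarithmic scale shows that the set $\Lambda:=\{N\in\N:|e^{id\log N}-1|\le \delta'/4\}$ has positive lower density in $\N$. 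For $N\in \Lambda$ and $(m,n)=(Nx,Ny)$ one has $P_1(m,n)^i=e^{id\log N}\cdot P_1(x,y)^i$, so the condition $|P_1(m,n)^i-1|\le \delta'$ is implied by $|P_1(x,y)^i-1|\le \delta'/2$. Invoking equidistribution of $(m/N,n/N)$ in $[0,1]^2$ (applicable because both indicators below are Riemann integrable: $\mathbf{1}_{A_{\delta'}}$ has boundary a finite union of rays from the origin, while $\mathbf{1}_{\{|P_1^i-1|\le \delta'/2\}}$ has boundary a countable union of smooth level curves of $P_1$, all Lebesgue null) gives
\[
\liminf_{N\in\Lambda,\ N\to\infty}\frac{|A_{\delta'}\cap C_{\delta'}\cap[N]^2|}{N^2}\ \ge\ I,
\]
where $I:=\int_{[0,1]^2}\mathbf{1}_{A_{\delta'}}(x,y)\cdot \mathbf{1}_{\{|P_1(x,y)^i-1|\le \delta'/2\}}\,dx\,dy$.

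Finally, the positivity of $I$ follows from a Fubini/Jacobian computation: for each fixed $t\in T_{\delta'}$, parametrizing along the ray of slope $t$ gives $(x,y)=(x,tx)$ with $\log P_1(x,tx)=d\log x+\log P_1(1,t)$, so the condition $|P_1(x,tx)^i-1|\le \delta'/2$ defines a union of intervals in $x\in(0,\min(1,1/t)]$ whose lengths form a convergent geometric series (with ratio $e^{-2\pi/d}$) whose positive sum is bounded below uniformly in $t\in T_{\delta'}$. Integrating over $t\in T_{\delta'}$ against the Jacobian $x\,dx\,dt$ yields $I>0$, which combined with Step~2 gives $\overline{\textup{d}}(A_{\delta'}\cap C_{\delta'})>0$. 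The main delicate point is the uniform equidistribution along the subsequence $\Lambda$; this is what the $\delta'/4$-slack in the definition of $\Lambda$ is designed to absorb, by taking up the discrete-to-continuum discretization error independently of $N\in \Lambda$.
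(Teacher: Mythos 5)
Your overall strategy (anchor everything to the single condition $|P_1(m,n)^i-1|\le\delta'$ via the triangle inequality, then rescale by $N$ with $N^{id}\approx 1$ and pass to a continuous integral by equidistribution of the grid $(m/N,n/N)$) is genuinely different from the paper's argument and is workable, but as written it has one genuine gap: the assertion that the hypothesis yields positive Lebesgue measure of the slope set $T_{\delta'}$. The hypothesis is a statement about upper density of a set of \emph{lattice points}, and the ``standard integer-to-integral comparison as in the sketch of \cref{L:positive1}'' goes in the opposite direction: there one passes from positivity of a continuous integral to positivity of a discrete density. What you need is the converse, namely that a positive-upper-density set of lattice points whose slopes lie in $T_{\delta'}$ forces $|T_{\delta'}|>0$, and this is simply false for an arbitrary measurable set of slopes (all lattice points have slope in the measure-zero set $\Q$), so it cannot be waved through as standard. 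It is true here, but only because of the structure of $T_{\delta'}$: the maps $t\mapsto (P_j(1,t)/P_{j+1}(1,t))^i$ are continuous off the finitely many zeros of the one-variable polynomials $P_{j+1}(1,t)$, so $T_{\delta'}$ is closed in $(0,1/\alpha)$ minus a finite set, hence $|T_{\delta'}|=|\overline{T_{\delta'}}|$; one then still needs a counting argument (for instance: writing the lattice points with slope in $\overline{T_{\delta'}}$ in terms of primitive vectors, positive upper density produces arbitrarily large $q$ for which $\overline{T_{\delta'}}$ contains $\gg q$ pairwise $1/q$-separated rationals, so shrinking neighborhoods of the compact set $\overline{T_{\delta'}}$ have measure bounded below, and compactness gives $|\overline{T_{\delta'}}|>0$). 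None of this appears in your write-up, and without it the concluding Fubini computation of $I>0$ has nothing to integrate over. The remaining steps (the set $\Lambda$, the rescaling with the $\delta'/4$ slack, Jordan measurability of the two sets, the ray-wise geometric-series bound) are fine, and your restriction to a common degree $d$ is harmless: the paper's own proof also implicitly works with degree $2$ (it uses $P_j(k_0m,k_0n)=k_0^2P_j(m,n)$ and density of $\{k^{2i}\}$ in $\S^1$), and only quadratic forms are needed downstream.

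For comparison, the paper avoids the continuum entirely: on the hypothesis set all values $(P_j(m,n))^i$ lie within $k\delta'$ of a common point $e(l_{m,n}\delta')$ with $l_{m,n}\in\{0,\dots,[1/\delta']\}$; pigeonholing gives one $l_0$ for which the corresponding set $A_{l_0}$ still has positive upper density, and then dilating by an integer $k_0\le K_0(\delta)$ with $(k_0^2)^i\approx e(-l_0\delta')$ rotates the common phase back to $1$, so $k_0\cdot A_{l_0}$ lands in the target set and has upper density at least $\overline{\textup{d}}(A_{l_0})/k_0^2>0$. This purely discrete pigeonhole-and-dilation argument sidesteps exactly the discrete-to-continuous passage at which your proof is currently incomplete; if you supply the missing measure-positivity argument, your route also goes through.
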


\begin{proof}
Let $\delta>0$. Our assumption gives that 
	$$
\overline{\textup{d}}(\{(m,n)\in \N^2 \colon m> \alpha n, \, \,  |(P_j(m,n))^i-(P_{j+1}(m,n))^i|\leq \delta',\,  j=1,\ldots, k-1\})>0,
$$
where $\delta'$ depends on $\delta$ and will be determined later.
Note that if  for some $m,n\in\N$ we have 
$|(P_j(m,n))^i-(P_{j+1}(m,n))^i|\leq \delta'$ for $j=1,\ldots, k-1$, then there exists $l_{m,n}\in \{0,\ldots, [1/\delta']\}$ such that 
$$
 |(P_j(m,n))^i-e(l_{m,n} \delta')|\leq k\delta',\quad  j=1,\ldots, k.
$$
So if $$
A:=\{(m,n)\in \N^2 \colon m> \alpha n, \, \,  
|(P_j(m,n))^i-(P_{j+1}(m,n))^i|\leq \delta', j=1,\ldots, k-1\},
$$ and for $l\in \Z_+$ we let 
\begin{equation}\label{E:Al}
	A_l:=\{(m,n)\in \N^2: m> \alpha n, \,\,  |(P_j(m,n))^i-e(l \delta')|\leq k\delta',  j=1,\ldots, k\},
\end{equation}
 we have 
$A\subset \bigcup_{l=0}^{[1/\delta']}A_l$. 

Since, by assumption, we have $\overline{\textup{d}}(A)>0$, we deduce using the subadditivity of $\overline{\textup{d}}$ that 
there exists $l_0\in \{0,\ldots, [1/\delta']\}$ such that 
$\overline{\textup{d}}(A_{l_0})>0$.

Moreover, since the set $\{k^{2i},k\in \N\}$ is dense in $\S^1$, there exists 
 $k_0\in \N$, that is bounded by some $K_0=K_0(\delta)$ (in particular, although $K_0$ depends on $l_0$ and $\delta'$,  $K_0$ does not),   such that 
\begin{equation}\label{E:k0}
|(k_0^2)^i-e(-l_0\delta')|\leq \delta/2. 
\end{equation}
Combining \eqref{E:Al} and \eqref{E:k0}, and since 
$P_j(k_0m,k_0n)=k_0^2P_j(m,n)$, 
we get that  if $(m,n)\in A_{l_0}$, then 
$$
|(P_j(k_0m,k_0n))^i-1|\leq K_0\delta'+\delta/2,\quad  j=1,\ldots, k.
$$ 
Hence, if we choose $\delta'$ so that $K_0\delta'\leq \delta/2$, we get that  for $(m,n)\in k_0\cdot A_{l_0}$ we have 
 $$
 |(P_j(m,n))^i-1|\leq \delta,\quad  j=1,\ldots, k.
 $$ 
 It follows that  the set $\{(m,n)\in \N^2 \colon m> \alpha n, \,\,|(P_j(m,n))^i-1|\leq \delta, j=1,\ldots, k\}$ contains the set $k_0\cdot A_{l_0}$ and hence  has upper density greater than
 $\overline{\textup{d}}(A_{l_0})/k_0^2>0$. 
	\end{proof}
Combining \cref{L:positive1} and \cref{L:positive2} we deduce the following: 
\begin{corollary}\label{Cor S_d dense}
Let $\alpha>0$ and $P_1,P_2,P_3 \in \Z[m,n]$ be as in \cref{L:positive1}.
Then  for every $\delta>0$  the set $S_{\delta}$ defined in 
\eqref{S_delta_def} has positive upper density.
\end{corollary}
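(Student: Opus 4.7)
The plan is to chain together \cref{L:positive1} and \cref{L:positive2} with $k=3$ applied to the triple $P_1,P_2,P_3$. First I observe that lower density bounds trivially imply upper density bounds, so the conclusion of \cref{L:positive1} in either case \eqref{I:pos1} or \eqref{I:pos2} supplies exactly the hypothesis of \cref{L:positive2}: for every $\delta'>0$,
$$
\overline{\textup{d}}\bigl(\{(m,n)\in\N^2 \colon m>\alpha n,\ |(P_j(m,n)/P_{j+1}(m,n))^i-1|\leq \delta',\ j=1,2\}\bigr)>0.
$$

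Fix $\delta>0$. Invoking \cref{L:positive2} with $k=3$, the polynomials $P_1,P_2,P_3$, and the same threshold $\alpha$, produces a positive lower bound for the upper density of the set
$$
\{(m,n)\in\N^2 \colon m>\alpha n,\ |(P_j(m,n))^i-1|\leq \delta,\ j=1,2,3\},
$$
which is precisely $S_\delta$ as defined in \eqref{S_delta_def}. One only needs to verify that the positivity hypothesis on $P_j(m,n)$ for $m>\alpha n$, required by \cref{L:positive2}, holds with the same $\alpha$ that appears in the statement; this is built into the statement of the corollary (it is the $\alpha$ chosen in \cref{L:positive1}).

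There is essentially no obstacle: the argument is a direct composition of the two preceding lemmas. The only subtlety worth noting is the passage from pairwise ratio conditions to individual value conditions, which is the content of \cref{L:positive2} and relies on the density of $\{k^{2i} : k\in\N\}$ in $\S^1$ together with the homogeneity of the $P_j$'s; this allows one to dilate a set where all $(P_j(m,n))^i$ cluster near a common point $e(l_0\delta')$ to a set where they cluster near $1$, losing only a bounded factor in the density.
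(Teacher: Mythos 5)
Your proposal is correct and coincides with the paper's argument: the corollary is obtained precisely by chaining \cref{L:positive1} (whose lower-density conclusion trivially gives the upper-density hypothesis) with \cref{L:positive2} applied with $k=3$ to $P_1,P_2,P_3$ and the same $\alpha$. Nothing further is needed.
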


\subsection{Two useful estimates} 
In order to obtain not only positiveness but also the asserted lower  
bound in \eqref{T:Main}, we will use the following estimate.
\begin{proposition}[{\cite[Lemma 1.6]{chu-2-commuting}}]\label{chu-lemma}
    Let $(X, \X, \mu)$ be a probability space, let $\X_1, \dots, \X_{\ell} $ be sub-$\sigma$-algebras of $\X$ and $F\in L^{\infty}(\mu)$ be non-negative. Then
    \begin{equation*}
        \int_X F\cdot \E(F| \X_{1}) \cdots \E(F| \X_{\ell}) \, d \mu \geq \Big( \int_X F \, d \mu\Big) ^{\ell+1}.
    \end{equation*}
\end{proposition}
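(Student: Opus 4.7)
The plan is to prove this inequality via two successive applications of Jensen's inequality. Write $a := \int_X F\, d\mu$ and $G_i := \E(F|\X_i)$; the inequality is trivial when $a = 0$ (since then $F \equiv 0$ $\mu$-a.e.), so assume $a > 0$ and introduce the probability measure $d\nu := (F/a)\, d\mu$ on $X$.

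The first application of Jensen, using the concave function $\log$ against the probability measure $\nu$, gives
\begin{equation*}
\log\Big(\tfrac{1}{a}\int_X F\prod_{i=1}^\ell G_i\, d\mu\Big) = \log\Big(\int_X \prod_{i=1}^\ell G_i\, d\nu\Big) \geq \sum_{i=1}^\ell \int_X \log G_i\, d\nu = \tfrac{1}{a}\sum_{i=1}^\ell \int_X F\log G_i\, d\mu.
\end{equation*}
Since $\log G_i$ is $\X_i$-measurable, the defining property of conditional expectation produces $\int F\log G_i\, d\mu = \int G_i\log G_i\, d\mu$. A second application of Jensen, this time with the convex function $\varphi(x) := x\log x$ against $\mu$, yields $\int G_i\log G_i\, d\mu \geq a\log a$. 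Chaining these bounds gives $\log(a^{-1}\int F\prod G_i\, d\mu) \geq \ell\log a$, which on exponentiating is the desired $\int F\prod G_i\, d\mu \geq a^{\ell+1}$.

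The one step requiring technical care, rather than new ideas, is the exchange $\int F\log G_i\, d\mu = \int G_i\log G_i\, d\mu$: I would verify this by a truncation argument, first observing that $F = 0$ $\mu$-a.e.\ on $\{G_i = 0\}$ (from $\int_{\{G_i=0\}} F\, d\mu = \int_{\{G_i=0\}} G_i\, d\mu = 0$), then truncating $\log G_i$ below by $-M$ to stay in $L^\infty$, applying the tower property of conditional expectation, and passing to the limit via dominated convergence using that $G_i\log G_i$ is bounded on $[0,\|F\|_\infty]$. Once this technicality is dispatched, the entire argument is the combination of Jensen applied to $\log$ (concavity) and to $x\log x$ (convexity), and no induction on $\ell$ is needed.
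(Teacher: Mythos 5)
Your argument is correct. Note that the paper itself contains no proof of this statement: it is quoted verbatim from Chu's work (the citation \cite{chu-2-commuting}), so there is nothing internal to compare against; what you have written is a valid self-contained proof of the cited lemma, and it is essentially the standard entropy-type argument behind it — normalize to the probability measure $d\nu=(F/a)\,d\mu$, apply Jensen for $\log$ under $\nu$, convert $\int F\log \E(F|\X_i)\,d\mu$ into $\int \E(F|\X_i)\log \E(F|\X_i)\,d\mu$ via the $\X_i$-measurability of $\log\E(F|\X_i)$, and finish with Jensen for the convex function $x\log x$. Your handling of the only delicate point is right: since $\{\E(F|\X_i)=0\}\in\X_i$, one gets $F=0$ $\mu$-a.e.\ on that set, so $\nu$ gives it measure zero and the truncation $\max(\log\E(F|\X_i),-M)$ legitimizes the tower-property exchange; for the left-hand side of that exchange, monotone (decreasing) convergence is the cleaner tool than domination, but since the right-hand side converges to the finite quantity $\int \E(F|\X_i)\log\E(F|\X_i)\,d\mu$ (bounded because $0\le\E(F|\X_i)\le\lVert F\rVert_{L^\infty(\mu)}$), equality of the limits follows either way. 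A further small virtue of your route is that it treats arbitrary $\ell$ and arbitrary (not necessarily nested or commuting) sub-$\sigma$-algebras in one stroke, with no induction, which is exactly the generality in which the paper invokes the estimate.
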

We will also frequently use the following elementary result.
\begin{lemma}[{\cite[Lemma 4.11]{Fra-Hom_Triples}}]\label{bilinear_lemma}
Let $V$ be a normed space, let $v: \N \to V$ be a $1$-bounded even
sequence, and $l_1, l_2\in \Z$, not both of them $0$. Let $l=|l_1|+|l_2|$
and suppose 
that for some $\e>0$ and for some $1$-bounded sequence $(v_{N})_{N\in \N}$ of elements in $V$ we have 
\begin{equation*}
    \limsup_{N\to \infty} \cesE_{n\in [N]} \lVert v(n) - v_N \rVert 
    \leq \e.
\end{equation*}
Then 
\begin{equation*}
   \limsup_{N\to \infty} \cesE_{m,n \in [N]} \lVert v({l_1m + l_2 n})
   -v_N \rVert \leq 4 l \e.
\end{equation*}
\end{lemma}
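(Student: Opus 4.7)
The plan is a two-step reduction: bound the bivariate Ces\`aro average first by a univariate one over the longer range $[lN]$, and then replace $v_{lN}$ by $v_N$ via triangle-inequality estimates extracted from the hypothesis. The evenness of $v$ lets us write $v(l_1 m + l_2 n) = v(|l_1 m + l_2 n|)$, which is what makes the signed case no harder than the unsigned one.

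For the first step, for each $K \in [0, lN]$ the number of pairs $(m,n) \in [N]^2$ with $|l_1 m + l_2 n| = K$ is at most $2N$: fixing $m$ determines $n$ uniquely within each choice of sign for $l_1 m + l_2 n$. Writing the bivariate average as a weighted sum in $K$ and discarding the negligible $K=0$ contribution yields
\begin{equation*}
    \cesE_{m,n \in [N]} \lVert v(l_1 m + l_2 n) - v_{lN} \rVert
    \leq \frac{2N}{N^2} \sum_{K=1}^{lN} \lVert v(K) - v_{lN} \rVert
    + \oh(1)
    = 2l \cdot \cesE_{k \in [lN]} \lVert v(k) - v_{lN} \rVert + \oh(1),
\end{equation*}
whose limit superior as $N\to\infty$ is at most $2l\e$ by the hypothesis applied at $lN$.

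To relate $v_{lN}$ back to $v_N$, averaging the triangle inequality $\lVert v_{lN} - v_N \rVert \leq \lVert v_{lN} - v(k) \rVert + \lVert v(k) - v_N \rVert$ over $k \in [N]$ gives
\begin{equation*}
    \lVert v_{lN} - v_N \rVert
    \leq \cesE_{k \in [N]} \lVert v(k) - v_N \rVert + \cesE_{k \in [N]} \lVert v(k) - v_{lN} \rVert.
\end{equation*}
The first term has $\limsup \leq \e$ directly; for the second, the partial sum over $[N]$ is dominated by the full sum over $[lN]$, so $\cesE_{k \in [N]} \lVert v(k) - v_{lN} \rVert \leq l \cdot \cesE_{k \in [lN]} \lVert v(k) - v_{lN} \rVert$, whose $\limsup$ is $\leq l\e$. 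Altogether $\limsup \lVert v_{lN} - v_N \rVert \leq (l+1)\e$.

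Combining via the triangle inequality $\lVert v(l_1 m + l_2 n) - v_N \rVert \leq \lVert v(l_1 m + l_2 n) - v_{lN} \rVert + \lVert v_{lN} - v_N \rVert$ yields $\limsup_{N\to\infty} \cesE_{m,n \in [N]} \lVert v(l_1 m + l_2 n) - v_N \rVert \leq 2l\e + (l+1)\e = (3l+1)\e \leq 4l\e$ for $l \geq 1$. The one point requiring a touch of care is the multiplicity bound $r(K) \leq 2N$, which is where evenness of $v$ is used to collapse the two signs of $l_1 m + l_2 n$; everything else is standard triangle-inequality bookkeeping between the scales $N$ and $lN$.
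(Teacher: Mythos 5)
Your proof is correct and yields the stated bound with room to spare: $(3l+1)\varepsilon\le 4l\varepsilon$ since $l\ge 1$. Note that the paper does not prove this lemma at all but quotes it from \cite{Fra-Hom_Triples}, so there is no internal proof to compare with; your argument (multiplicity at most $2N$ for each value of $|l_1m+l_2n|$, reduction to the scale $lN$, and a triangle-inequality comparison of the anchors $v_N$ and $v_{lN}$) is the natural counting proof of such statements and is complete as written. The only spot deserving one extra line is the degenerate case $l_1=0$ or $l_2=0$: there ``fixing $m$ determines $n$ up to sign'' does not apply, but the multiplicity is then at most $N$ (the value of $|l_1m+l_2n|$ fixes the single variable with nonzero coefficient while the other ranges freely), so the bound $2N$ still holds; similarly the $K=0$ pairs number at most $N$ and are indeed negligible by the $1$-boundedness of $v$ and $v_{lN}$.
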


\section{Proof sketch for the Pythagorean equation}\label{Sec_proof_plan}
In this section, we provide a sketch of the proof of \cref{T:Main} in 
case $a=b=c=1$, which corresponds to the classical Pythagorean equation
\begin{equation}\label{proof plan eq 1}
	x^2 + y^2 =z^2.
\end{equation}
We use solutions of \eqref{proof plan eq 1} given in parametric form by 
\begin{equation}\label{proof plan eq 2}
	x=k\, 2mn , \quad y=k\, (m^2 -n^2), \quad z=k\, (m^2 + n^2), \quad k,m,n\in \N.
\end{equation}
If we let 
\begin{equation}\label{proof plan eq 4}
	P_1(m,n):=2mn, \quad P_2(m,n):=m^2 -n^2, \quad P_3(m,n):=m^2 + n^2,
\end{equation}
we see that the case $a=b=c=1$ of \cref{T:Main} follows from the next result. 
\begin{theorem}\label{proof plan thm 1}
	Let $(X,\CX,\mu,T_n)$  be a pretentious multiplicative action, let 
	$F\in L^{\infty}(\mu)$ with $F\geq 0$, 
	and let $P_1,P_2,P_3$ be as in 
	\eqref{proof plan eq 4}. Then for every $\e>0$ we have
	\begin{equation}\label{proof plan eq 3}
		\int_X F \cdot T_{P_1(m,n)} F \cdot T_{P_2(m,n)} F \cdot T_{P_3(m,n)} F d\mu
		\geq \Big(\int_X F d \mu \Big)^4 -\e
	\end{equation}
	for a set of $(m,n)\in \N^2$, $m>n$, with positive upper density.
\end{theorem}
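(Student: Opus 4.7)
After normalizing so that $\|F\|_{\infty} \leq 1$ and writing $c := \int F \, d\mu$, my strategy is to analyze the averaged expression
$$A_N(Q,v,\delta) := \cesE_{(m,n) \in S_\delta \cap [N]^2} \int_X F \cdot T_{P_1(Qm+1,Qn+v)}F \cdot T_{P_2(Qm+1,Qn+v)}F \cdot T_{P_3(Qm+1,Qn+v)}F \, d\mu,$$
where $S_\delta$ is the set \eqref{S_delta_def} associated with $P_1,P_2,P_3$, the modulus $Q$ is drawn from the multiplicative \Folner{} sets $\Phi_{r,K}$ or $\Phi_{r,K,P_j}$, and $v=v(Q,\ldots)$ is chosen via the Chinese remainder theorem so that $v\equiv Q^{-1}$ in suitable moduli and $(v,\prod_{p\leq K}p)=1$, in the spirit of \eqref{eq_cong_lw_1}--\eqref{eq_cong_lw_2}. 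By \cref{Cor S_d dense} the set $S_\delta$ has positive upper density, so a pigeonhole argument will reduce the theorem to showing that, after appropriate averaging over $Q$ and passage to nested limits, $A_N(Q,v,\delta) \geq c^4 - \e$.

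The next step is spectral analysis. Using \eqref{spec_eq_1} and the assumption that $\sigma_F$ is supported on $\M_p$, I will apply \cref{quad_conc} to the irreducible forms $P_2,P_3$ (which have leading $m^2$-coefficient $1$) and \cref{linear_conc} to the linear-in-$m$ factor $Qm+1$ of $P_1(Qm+1,Qn+v)=2(Qn+v)(Qm+1)$. For each $f\sim\chi\cdot n^{it}$ in the pretentious support, this approximates $f(P_j(Qm+1,Qn+v))$ by a product of three pieces: an Archimedean factor $(P_j(Qm,Qn))^{it}$, which is within $\delta$ of $1$ on $S_\delta$; a Dirichlet character evaluation like $\chi(P_j(1,v))$, which is killed by the congruence conditions on $v$; and an exponential factor $\exp(G_{P_j,N}(f,K))$ or $\exp(F_N(f,K))$. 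A use of \cref{bilinear_lemma} will be required to transfer single-variable $L^2$ concentration in $m$ to the bivariate grid.

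The main obstacle, which is also the principal novel ingredient highlighted in the proof strategy, is the removal of the exponential factor in the \emph{$1$-pretentious oscillating} regime, where $\exp(F_N(f,K))$ fails to stabilize as $N\to\infty$. To overcome this, I will replace the single modulus $Q$ by a triple $(Q_1,Q_2,Q_3)$ with $Q_j$ pairwise coprime and $Q_j\in\Phi_{r,K,P_j}$, and choose $v=v(Q_1,Q_2,Q_3)$ via the Chinese remainder theorem so that, for $Q=Q_1Q_2Q_3$, each $P_j(Qm+1,Qn+v)$ factors cleanly through $Q_j$. A triple multiplicative average over $(Q_1,Q_2,Q_3)$ then annihilates the contribution of any $f$ whose spectral measure lies outside $\M_{p,\textup{fin.supp.},P_j}$, via the characterization of this factor in \cref{P: conc_L_2_factors}\eqref{factors_iii}. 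The averaged expression is then bounded below, up to an error vanishing as $\delta\to 0$ and $r,K\to\infty$, by
$$\int_X F \cdot F_1 \cdot F_2 \cdot F_3 \, d\mu, \qquad F_j := \E\bigl(F \mid \X_{p,\textup{fin.supp.},P_j}\bigr).$$

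Finally, \cref{chu-lemma} applied to the sub-$\sigma$-algebras $\X_{p,\textup{fin.supp.},P_j}$ delivers the bound $\int_X F\cdot F_1\cdot F_2\cdot F_3\,d\mu \geq c^{4}$, which combined with the earlier error estimates yields $A_N(Q,v,\delta)\geq c^4-\e$ and hence the theorem. The bulk of the technical work lies in the bookkeeping of the nested limits ($N\to\infty$, then $K\to\infty$, then $r\to\infty$, then $\delta\to 0$, interleaved with the multiplicative averages over the $Q_j$), and in verifying that the congruence, coprimality, and positivity constraints on $v$ and the $Q_j$ can be met simultaneously for all three forms; I expect the triple-$Q$ averaging step, which is what distinguishes this argument from those of \cite{fra-klu-mor-2,Fra-Klu-Mor}, to be the most delicate.
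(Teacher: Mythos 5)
Your overall architecture (restriction to the set $S_\delta$, the concentration estimates of \cref{linear_conc} and \cref{quad_conc}, a triple multiplicative average to annihilate the $1$-pretentious oscillating components, and \cref{chu-lemma} at the end) is the same as the paper's, but there is a genuine gap in your treatment of the term $P_1(m,n)=2mn$. You project all three terms onto factors of the form $\X_{p,\textup{fin.supp.},P_j}$ and claim the lower bound $\int_X F\cdot F_1\cdot F_2\cdot F_3\,d\mu$ with $F_j=\E\bigl(F\mid\X_{p,\textup{fin.supp.},P_j}\bigr)$. For $j=1$ this fails: first, $\M_{p,\textup{fin.supp.},P}$ is only defined for irreducible binary quadratic forms, and $P_1=2mn$ is not one; second, and more substantively, no factor containing modified Dirichlet characters can work for the first term, because $P_1(1,0)=0\neq 1$ (whereas $P_2(1,0)=P_3(1,0)=1$). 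Extracting a modulus $Q_1$ from $P_1(Qm+1,Qn+v)=2(Qm+1)(Qn+v)$ forces $Q_1\mid v$, so for $f$ pretending to a Dirichlet character the value $f(P_1(Qm+1,Qn+v))$ carries the factor $f(Q_1)$ (equivalently, the values of $f$ at the small primes dividing $v$), which varies with $Q_1$ and cannot be normalized to $1$ by any congruence condition on $v$; only Archimedean characters are insensitive to this residue data. This is why the paper takes the first factor to be $\X_{\A}$ of \eqref{def_X_A}: the average over $Q_1\in\Phi_r$ kills everything orthogonal to $\X_{\A}$, concentration for $f=n^{it}$ is then controlled on $S_\delta$, and the final integrand is $F\cdot\E(F\mid\X_{\A})\cdot\E(F\mid\X_{p,\textup{fin.supp.}})\cdot\E(F\mid\X_{p,\textup{fin.supp.},P_3})$ (see Steps 1--4 of the proof of \cref{P: main a=c P_2 reducible} and the remark in \cref{Sec_proof_plan} that the first factor must not contain modified Dirichlet characters). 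With your triple of factors, the key concentration estimate analogous to \eqref{eq_useful_2} is false, so the application of \cref{chu-lemma} is made to the wrong projections and the positivity argument does not go through.

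A second, related problem is the grid itself. You average over $(Qm+1,Qn+v)$ with $Q=Q_1Q_2Q_3$ and all three $Q_j$ drawn from \Folner{} sets over the same prime range $(r,K]$ with pairwise coprimality imposed. Then $Q$ is not divisible by $\prod_{p\leq K}p$ nor by the relevant conductors, so the hypotheses of \cref{linear_conc} and \cref{quad_conc} are not satisfied; moreover the main term of those estimates contains $(P_j(Qm,Qn))^{it}=Q^{2it}(P_j(m,n))^{it}$, and with $Q$ varying over the average the factor $Q^{2it}$ oscillates uncontrollably (membership in $S_\delta$ only controls $(P_j(m,n))^{it}$), which again destroys concentration for spectral components with $t\neq 0$. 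The paper resolves both issues by fixing the grid modulus to be the highly divisible $Q_{\delta,L}$ of \eqref{Q_delta_l_def}, which satisfies $|Q_{\delta,L}^{\,i}-1|\leq\delta$ by \eqref{Q delta L def}, and by extracting $Q_1\in\Phi_r$, $Q_2\in\Phi_{r,K}$, $Q_3\in\Phi_{K,L,P_3}$, supported on pairwise disjoint prime ranges, as greatest common divisors through the Chinese remainder choice of $v$ in \cref{lemma_choice_of_v_1}; that disjointness is what makes the congruences solvable and renders each term asymptotically independent of the other two moduli (\cref{L: main a=c P_2 reducible}). These structural choices are not cosmetic: without them the concentration and annihilation steps you invoke are unavailable.
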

All notions used in this proof sketch are defined in Section~\ref{S:Background}.

{\em Finitely generated case.} For simplicity, first suppose that the action $T_n$ is ergodic and finitely  
generated. We will  average over the grid $(Qm+1, Qn)$, where $Q$ is highly  
divisible, and apply the $Q$-trick (as in \cite{Fra-Klu-Mor}) to obtain  
\cref{proof plan thm 1}. More precisely, by \cite{Charamaras-multiplicative} 
for each $F \in L^2(\mu)$, its  
spectral measure is supported on  
$$  
\M_{p,\mathrm{f.g.}} := \{ f \colon \N \to \mathbb{S}^1, \; \text{$f$ is  
	pretentious and finitely generated} \},  
$$  
where finitely generated means that $\{ f(p) \colon p \in \P \}$ is finite.  

For every $f \in \M_{p,\mathrm{f.g.}}$, if $f \sim \chi \cdot n^{it}$, then  
$t = 0$, while for the oscillating factors $F_N(f,L)$ and $G_{P,N}(f,L)$  
from \eqref{def_F_N} and \eqref{def_G_P,N}, we have  
$\exp(F_N(f,L)), \exp(G_{P,N}(f,L)) \to 1$ as $N\to \infty$ and then  $L \to \infty$; finite  
generation is essential here and the claimed properties are established in \cite[Lemma~B.3 and B.4]{Charamaras-multiplicative}. These facts make it easier to handle the  
terms $T_{P_j(m,n)}F$ for $j \in \{1,2,3\}$.  
Using the spectral theorem together with  
Propositions~\ref{linear_conc} and \ref{quad_conc}, we obtain  
$$  
\lim_{K \to \infty} \limsup_{N \to \infty} \max_{Q \in \Phi_K}  
\cesE_{(m,n) \in [N]^2,\, m>n}  
\lVert T_{P_1(Qm+1, Qn)} F -T_{2Qn} F \rVert_{L^2(\mu)} = 0,  
$$  
while 
$$  
\lim_{K \to \infty} \limsup_{N \to \infty} \max_{Q \in \Phi_K}  
\cesE_{(m,n) \in [N]^2,\, m>n}  
\lVert T_{P_j(Qm+1, Qn)} F - F \rVert_{L^2(\mu)} = 0,  \quad j=2,3, 
$$  
where $(\Phi_K)_{K \in \N}$ is a multiplicative \Folner{} sequence as in  
\eqref{eq_foln_def_1}.  
For $j=1$,  let $F_0 := F - \int_X F\, d\mu$. Using the spectral theorem again and the fact that multiplicative averages of non-trivial  
multiplicative functions vanish, we deduce  
$$  
\lim_{K \to \infty} \limsup_{N \to \infty}  
\lVert \cesE_{Q \in \Phi_K} \cesE_{(m,n) \in [N]^2,\, m>n}  
T_{P_1(Qm+1, Qn)} F_0 \rVert_{L^2(\mu)} = 0.  
$$  
Combining these results we obtain  
$$  
\liminf_{K \to \infty} \liminf _{N \to \infty}  
\cesE_{Q \in \Phi_K} \cesE_{(m,n) \in [N]^2,\, m>n}  
\int_X F \cdot \prod_{j=1}^3 T_{P_j(Qm+1, Qn)} F\, d\mu  
$$  
is equal to  
$$  
\int_X F\, d\mu \cdot \int_X F^3\, d\mu  
\geq \Big( \int_X F\, d\mu \Big)^4.
$$

{\em General  case.} 
We now return to the general case. Since the action $T_n$ is not  
necessarily finitely generated, it is no longer true that the oscillating  
factors $\exp(F_N(f,L))$ and $\exp(G_{P,N}(f,L))$ converge to $1$, and the  
parameter $t$ need not be zero. 

To control the $n^{it}$ terms, we restrict  
our $(m,n)$-averaging to a suitably chosen set $S_{\delta}$ (see for  
instance \eqref{S_delta_def}), and then take $\delta\to 0^+$.  

To treat the oscillating factors, we take a triple multiplicative average  
over parameters $Q_1 \in \Phi_r$, $Q_2 \in \Phi_{r,K}$, and  
$Q_3 \in \Phi_{K,L,P_3}$, where the sets $\Phi_r$, $\Phi_{r,K}$, and  
$\Phi_{K,L,P_3}$ are defined in \eqref{eq_foln_def_1} and  
\eqref{eq_foln_def_2}. The primes dividing $Q_1$, $Q_2$, and $Q_3$ lie in  
disjoint ranges $[1,r]$, $(r,K]$, and $(K,L]$ respectively. Using the  
Chinese Remainder Theorem, we choose $v = v(Q_1,Q_2,Q_3) \in \N$ as in  
\cref{lemma_choice_of_v_1}, and average over the grid  
$(Q_{\delta,L} m + 1,\, Q_{\delta,L} n + v)$, where $Q_{\delta,L}$ is the
highly divisible integer in \eqref{Q_delta_l_def}. The disjointness of the  
prime factors of $Q_1$, $Q_2$, and $Q_3$ is essential for the  
congruences in \cref{lemma_choice_of_v_1} to be solvable.  

With this choice of $v$, averaging over the grid  
$(Q_{\delta,L} m + 1,\, Q_{\delta,L} n + v)$ allows us to factor out $Q_j$  
from $P_j(Q_{\delta,L} m + 1,\, Q_{\delta,L} n + v)$ for each  
$j \in \{1,2,3\}$, and ensures that  
$T_{P_j(Q_{\delta,L} m + 1,\, Q_{\delta,L} n + v)} F$ is asymptotically  
independent of the specific value of $Q_l$ for $l \neq j$  
(see \cref{L: main a=c P_2 reducible}).  
Then for every $j \in \{1,2,3\}$, we show that there exists a factor  
$\X_j$ of the system $(X, \X, \mu, T_n)$ such that if we write  
$F = F_{1,j} + F_{2,j}$, where $F_{1,j} := \E(F \mid \X_j)$ and  
$F_{2,j} \perp F_{1,j}$, then  
\begin{equation}\label{proof plan eq 8}
	\cesE_{(m,n) \in S_{\delta} \cap [N]^2} 
	\lVert \cesE_{Q_j} T_{P_j(Q_{\delta,L} m + 1, Q_{\delta,L} n + v)}  
	F_{2,j} \rVert_{L^2(\mu)}
\end{equation}
goes to $0$ when all relevant parameters go to infinity  
(see for example \eqref{aux_eq_3}).  
Moreover, the term $T_{P_j(Q_{\delta,L} m + 1, Q_{\delta,L} n + v)}  
F_{1,j}$ concentrates around $F_{1,j}$, namely  
\begin{equation}\label{proof plan eq 9}
	\max_{Q_j} \cesE_{(m,n) \in S_{\delta} \cap [N]^2} 
	\lVert T_{P_j(Q_{\delta,L} m + 1, Q_{\delta,L} n + v)} F_{1,j}  
	- F_{1,j} \rVert_{L^2(\mu)}
\end{equation}
is asymptotically $0$ when all relevant parameters go to infinity  
(see for example \eqref{eq_useful_2}).  

The factors $\X_j$ are chosen so that whenever $F \in \X_j$ and $f$ lies  
in the support of the spectral measure  $\sigma_F$, the oscillating factor $\exp(F_N(f,L))$  
(or $\exp(G_{P,N}(f,L))$) arising from the concentration estimates  
(Proposition \ref{linear_conc} or \ref{quad_conc}) equals $1$ for  
$N > L$ and $L$ sufficiently large. This property is crucial in proving  
\eqref{proof plan eq 9}.  
The specific factors are  
$\X_1 := \X_{\A}$,  
$\X_2 := \X_{p,\textup{fin.supp.}}$,  
and $\X_3 := \X_{p,\textup{fin.supp.},P_3}$,  
defined in  
\eqref{def_X_A}, \eqref{def_X_p_fin_supp}, and  
\eqref{def_X_p_fin_supp_P} respectively. It is also crucial that the factor $\X_1$ does not contain  
modified Dirichlet characters (unlike the other two factors), since otherwise we could not  
force concentration at $F_{1,1}$, as $P_1(1,0) \neq 1$ (while $P_j(1,0)=1 $ for $j=2,3$).  

Using \eqref{proof plan eq 8} and \eqref{proof plan eq 9}, and averaging  
over $Q_1 \in \Phi_r$, $Q_2 \in \Phi_{r,K}$, and  
$Q_3 \in \Phi_{K,L,P_3}$, together with $(m,n) \in S_{\delta}$ in  
\eqref{proof plan eq 3}, and then letting  
$N \to \infty$, $L \to \infty$, $K \to \infty$, $r \to \infty$, and  
finally $\delta \to 0^{+}$, in this order, yields  
\begin{multline*}
	\suplim_{\delta,r,k,L,N} \, \cesE_{Q_1\in \Phi_r, Q_2\in \Phi_{r,K}, Q_3\in \Phi_{K,L,P_3}} 
	\cesE_{(m,n) \in S_{\delta} \cap [N]^2} 
	\int_X F \cdot \prod_{j=1}^3  
	T_{P_j(Q_{\delta,L} m + 1, Q_{\delta,L} n + v)} F \, d\mu \\
	= \int_X F \cdot \prod_{j=1}^3 \E(F \mid \X_j) \, d\mu  
	\geq \Big( \int_X F \, d\mu \Big)^4,
\end{multline*}
where the last estimate follows from \cref{chu-lemma}.  Note that we use $\limsup$ so that we can take advantage of the fact  
that the set $S_\delta$ has positive upper density.
This completes the proof of \cref{proof plan thm 1}.

\section{Proof of the main theorem when  $a=c$ or $b=c$}\label{Sec a=c or b=c}
In this section we prove \cref{T:Main} when  $a=c$ or $b=c$.  
The cases are symmetric, so we can assume that $a=c$. 
One  easily verifies that 
\begin{equation}\label{eq_param_a=c_2}
    x=k \,(m^2 - ab n^2),\:\: y=k \, 2 a mn, \:\:
    z=k \, ( m^2 + ab n^2),\:\:\: k, m,n\in \N,
\end{equation}
are solutions of $a x^2 + b y^2=c z^2$ when $a=c$. Let
\begin{equation}\label{quadratic_forms_a=c}
P_1(m,n):=2a mn, \:\:\:
P_2(m,n):=m^2 - ab n^2, \:\:\:
P_3(m,n):= m^2 + ab n^2.
\end{equation} 
It is easy to see that, since $a, b \neq 0$,  
the polynomials $P_1, P_2, P_3$ are pairwise independent,  
a property that is implicitly used in our arguments when we try  
to solve some simultaneous congruences. Also, for all $(m,n)\in \N^2$,  
$P_1(m,n), P_3(m,n) >0$, and if $m> \sqrt{ab}n$,  
we also have that $P_2(m,n) >0$.

The case $a=c$ of \cref{T:Main} follows from the following result.
\begin{theorem}\label{T: main a=c}
Let $(X,\CX,\mu,T_n)$  be a pretentious multiplicative action, let 
$F\in L^{\infty}(\mu)$ with $F\geq 0$, 
and let $P_1,P_2,P_3$ be as in \eqref{quadratic_forms_a=c}. Then for every $\e>0$ we have
\begin{equation}\label{T: main a=c eq}
    \int_X F \cdot T_{P_1(m,n)} F \cdot T_{P_2(m,n)} F \cdot T_{P_3(m,n)} F \, d\mu
    \geq  \Big(\int_X F \, d \mu \Big)^4 -\e
\end{equation}
for a set of $(m,n)\in \N^2$, $m>\sqrt{ab} n$, with positive upper 
density.
\end{theorem}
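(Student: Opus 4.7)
The plan is to follow the general strategy of Section~\ref{Sec_proof_plan}, specialized to the forms $P_1(m,n)=2amn$, $P_2(m,n)=m^2-abn^2$, and $P_3(m,n)=m^2+abn^2$. First, we restrict the $(m,n)$-average in \eqref{T: main a=c eq} to the set $S_\delta$ from \eqref{S_delta_def} with $\alpha=\sqrt{ab}$; this set has positive upper density by \cref{Cor S_d dense} applied to case~\eqref{I:pos2} of \cref{L:positive1}, and the cutoff neutralizes the Archimedean phases $n^{it}$ that arise from any pretentious $f\sim \chi\cdot n^{it}$ on the spectral side. Next, we pass to averages over the grid $(Q_{\delta,L}m+1,\, Q_{\delta,L}n+v)$, where $v=v(Q_1,Q_2,Q_3)$ is chosen by the Chinese Remainder Theorem (as in \cref{lemma_choice_of_v_1}), and we take a triple multiplicative average over $Q_1\in \Phi_r$, $Q_2\in \Phi_{r,K}$, and $Q_3\in \Phi_{K,L,P_3}$, together with an analogous $P_2$-set when $P_2$ is irreducible. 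The choice of $v$ is designed so that $Q_j$ can be factored out of $P_j(Q_{\delta,L}m+1,\, Q_{\delta,L}n+v)$ for each $j$, which is what decouples the three evolutions asymptotically.

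The core of the argument is to produce, for each $j\in\{1,2,3\}$, a factor $\X_j$ such that $T_{P_j}F$ concentrates at $F_{1,j}:=\E(F\mid \X_j)$. Since $P_1(1,0)=0$, the only viable choice is $\X_1:=\X_\A$, and the two linear factors of $P_1(Q_{\delta,L}m+1,\, Q_{\delta,L}n+v)=2a(Q_{\delta,L}m+1)(Q_{\delta,L}n+v)$ are handled by \cref{linear_conc} combined with \cref{P: conc_L_2_factors}\eqref{factors_i}. The form $P_3$ is irreducible (its discriminant $-4ab$ is negative), so we take $\X_3:=\X_{p,\textup{fin.supp.},P_3}$ and apply \cref{quad_conc} with \cref{P: conc_L_2_factors}\eqref{factors_iii}. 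The form $P_2$ is treated by a dichotomy: if $ab$ is a perfect square, then $P_2$ factors into linear terms and we use $\X_2:=\X_{p,\textup{fin.supp.}}$ via \cref{linear_conc} and \cref{P: conc_L_2_factors}\eqref{factors_ii}; otherwise $P_2$ is irreducible and we use $\X_2:=\X_{p,\textup{fin.supp.},P_2}$ via \cref{quad_conc}. In every case, the $S_\delta$-cutoff kills the $n^{it}$-phases, the concentration estimate kills the Dirichlet-character contribution after the $Q_j$-averaging, and the multiplicative \Folner{} averages over $\Phi_r$, $\Phi_{r,K}$, and $\Phi_{K,L,P_j}$ kill the contribution from the orthogonal complement $F_{2,j}:=F-F_{1,j}$, thanks to their asymptotic dilation-invariance under the primes in the relevant ranges.

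After replacing each $T_{P_j}F$ by $F_{1,j}$ and taking successive limits $N\to\infty$, $L\to\infty$, $K\to\infty$, $r\to\infty$, and finally $\delta\to 0^+$, the left-hand side of \eqref{T: main a=c eq} is bounded below by
$$
\int_X F\cdot \E(F\mid \X_1)\cdot \E(F\mid \X_2)\cdot \E(F\mid \X_3)\, d\mu,
$$
and applying \cref{chu-lemma} with $\ell=3$ yields the desired bound $(\int F\, d\mu)^4$. The $-\varepsilon$ and the fact that the conclusion holds on a set of $(m,n)$ of positive upper density both come from working with $\suplim$ over $S_\delta$, exactly as at the end of the Pythagorean sketch. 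The main obstacle is the propagation of the single-variable concentration estimates of Propositions \ref{linear_conc} and \ref{quad_conc} to the two-variable averages over $S_\delta$, uniformly in $Q\in \Phi_{r,K,P_j}$: this requires iterating \cref{bilinear_lemma} and carefully verifying the compatibility among the conductors of the Dirichlet characters controlling the spectral components of $F$ and the moduli $Q_j$ produced by the CRT-choice of $v$. A secondary subtlety is the reducible/irreducible dichotomy for $P_2$, which forces the choice of $\X_2$ to depend on whether $ab$ is a perfect square and dictates which of \cref{linear_conc} or \cref{quad_conc} is invoked.
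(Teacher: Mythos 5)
Your proposal is correct and follows essentially the same route as the paper: restriction to $S_\delta$, the CRT-chosen shift $v$ and the grid $(Q_{\delta,L}m+1,\,Q_{\delta,L}n+v)$, the triple multiplicative average over $Q_1\in\Phi_r$, $Q_2\in\Phi_{r,K}$ (or $\Phi_{r,K,P_2}$), $Q_3\in\Phi_{K,L,P_3}$, the factors $\X_\A$, $\X_{p,\textup{fin.supp.}}$ (resp.\ $\X_{p,\textup{fin.supp.},P_2}$), $\X_{p,\textup{fin.supp.},P_3}$ with \cref{linear_conc}, \cref{quad_conc}, \cref{bilinear_lemma}, the reducible/irreducible dichotomy for $P_2$, and \cref{chu-lemma} to finish, exactly as in \cref{P: main a=c P_2 reducible} and \cref{P: main a=c P_2 irreducible}. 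The only minor (and easily fixable) discrepancy is that the paper defines $S_\delta$ with $m>2\sqrt{ab}\,n$ rather than $\alpha=\sqrt{ab}$, so that positivity of $P_2$ persists after passing to the shifted grid.
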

Note that  $P_1$ is reducible and 
$P_3$ is irreducible. We split the proof of \cref{T: main a=c} into two cases, 
according to whether $P_2$ is irreducible or not.

Throughout \cref{Sec a=c or b=c}, $\Phi_r$, $\Phi_{r,K}$, 
$\Phi_{r,K,P}$ are as in \eqref{eq_foln_def_1}, \eqref{eq_foln_def_2},
$Q_{\delta,L}$ is as in \eqref{Q_delta_l_def}. Furthermore, 
$\omega_P$ is as in \eqref{omega_def}, and $\D$, $F_N(f,L)$, $G_{P,N}(f,L)$
are as in \eqref{E: definition of pretentious distance}, 
\eqref{def_F_N}, \eqref{def_G_P,N}. 
Finally, $X_{\A}$, $X_{p,\textup{fin.supp.}}$, and $X_{p,\textup{fin.supp.},P}$
are the factors defined respectively in \eqref{def_X_A}, 
\eqref{def_X_p_fin_supp}, and \eqref{def_X_p_fin_supp_P}.

\subsection{$P_2$ reducible}\label{Sec : a=c_P_2_reducible}
Here we handle the case when $P_2$ is reducible.  
A representative example is when $a=b=c=1$, which 
 corresponds to the classical Pythagorean equation  
$x^2 + y^2 = z^2,$
in which case  
\eqref{quadratic_forms_a=c} gives 
$$
P_1(m,n)=2mn,   \quad P_2(m,n) = m^2-n^2, \quad  
P_3(m,n)=m^2+n^2.
$$

Since $P_2$ is reducible, we have that $ab$ is a perfect square, i.e., 
$ab=\gamma ^2$ for some $\gamma \in \N$, and we have that 
$P_2(m,n)=(m - \gamma n)(m + \gamma n)$. 
Let $\kappa \in \N$ so that $1-\kappa \gamma \neq 0$.

To prove the result, given $Q_1 \in \Phi_r$, $Q_2 \in \Phi_{r,K}$, and 
$Q_3 \in \Phi_{K,L, P_3}$,
we will choose a suitable integer $v$, depending on $\delta, r,K,L,Q_1,Q_2, Q_3$, and 
we will average over the grid 
$(Q_{\delta,L} m+1, Q_{\delta,L} n +v)$ in \eqref{T: main a=c eq} where  $Q_{\delta,L}$
is as in \eqref{Q_delta_l_def}.

From now on, we assume that $r$ is large enough so that for $p>r$ we have 
$\omega_{P_3}(p)\in \{0,2\}$ and 
\begin{equation}\label{E1: p not dividing}
p\nmid 2\kappa \gamma ab(1+ab)(1-\kappa \gamma)
(1+\kappa \gamma)(1+\kappa ^2 ab).    
\end{equation}

\begin{lemma}\label{lemma_choice_of_v_1}
Let $\delta>0$, $r,K,L\in \N$ with $r<K<L/2$, 
$P_1,P_2,P_3$ be as in \eqref{quadratic_forms_a=c}, and let $Q_1 \in \Phi_r$, 
$Q_2 \in \Phi_{r,K}$,  $Q_3 \in \Phi_{K,L,P_3}$. Then there is an integer
$v$ with $0\leq v \leq Q_{\delta, L}-1$ such that 
\begin{enumerate}[(i)]
    \item \label{E1: lemma_choice_of_v_1} $(Q_{\delta, L},v)=Q_1$, 
    $\frac{v}{Q_1}\equiv 1 \pmod {Q_1}$,
    \item \label{E2: lemma_choice_of_v_1} 
    $(Q_{\delta, L},1-\gamma v)= Q_2$, 
    $\frac{1-\gamma v}{Q_2} \equiv Q_2 ^{-1} 
    \pmod{{Q_1}}$,
    \item \label{E3: lemma_choice_of_v_1} $(Q_{\delta, L},1+\gamma v)=1$, 
    ${1+\gamma v}\equiv 1 \pmod {{Q_1}}$,
    \item \label{E4: lemma_choice_of_v_1} 
    $(Q_{\delta, L},P_3(1,v))=Q_3$,
    $\frac{P_3(1,v)}{Q_3} \equiv Q_3^{-1} \pmod{{Q_1}}$.
\end{enumerate}
\end{lemma}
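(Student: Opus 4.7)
The plan is to construct $v$ coordinate-wise via the Chinese Remainder Theorem, prescribing its residue modulo each prime power dividing $Q_{\delta,L}$ and then assembling. The essential structural fact is that the prime supports of $Q_1$, $Q_2$, and $Q_3$ lie in the pairwise disjoint intervals $[1,r]$, $(r,K]$, and $(K,L]$ respectively, so the local conditions at different primes decouple and may be analyzed one at a time. Since $\theta_p(Q_{\delta,L})\ge 2L$ for every $p\le L$, there is ample room to accommodate each local prescription.

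The residue of $v$ at each prime would be chosen as follows. At $p\le r$ (so $p\mid Q_1$), I would set $v\equiv Q_1\pmod{p^{2\theta_p(Q_1)}}$; this forces $v$ to have exact $p$-valuation $\theta_p(Q_1)$, immediately yielding the $p$-local part of \eqref{E1: lemma_choice_of_v_1}, and the congruences modulo $Q_1$ appearing in \eqref{E2: lemma_choice_of_v_1}--\eqref{E4: lemma_choice_of_v_1} reduce to $\gamma v\equiv 0$ and $abv^2\equiv 0\pmod{Q_1}$, both automatic since $Q_1^2\mid v^2$. At $p\mid Q_2$ (so $r<p\le K$), Hensel's lemma applied to the simple root $\gamma^{-1}$ of $1-\gamma X\pmod p$ produces a lift; I would take $v\equiv \gamma^{-1}+p^{\theta_p(Q_2)}\pmod{p^{\theta_p(Q_2)+1}}$ to guarantee $p^{\theta_p(Q_2)}\parallel 1-\gamma v$. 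At $p\mid Q_3$, the assumption $\omega_{P_3}(p)=2$ supplies a simple root $n_0$ of $1+abX^2\pmod p$, which Hensel lifts; I would then perturb the lift modulo $p^{\theta_p(Q_3)+1}$ to secure $p^{\theta_p(Q_3)}\parallel P_3(1,v)$. Finally, at primes $p\in(K,L]$ with $\omega_{P_3}(p)=0$ I would choose $v\pmod p$ avoiding the residues $\{0,\pm\gamma^{-1}\}$, which is possible since $p>r$ is large.

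Next I would verify \eqref{E1: lemma_choice_of_v_1}--\eqref{E4: lemma_choice_of_v_1} prime by prime. The gcd conditions reduce to the local non-divisibility statements built into the prescription; the nontrivial cross-checks are that at primes $p\mid Q_2$ one has $p\nmid 1+\gamma v$ and $p\nmid P_3(1,v)$, and symmetrically at primes $p\mid Q_3$. In each case the quantity reduces modulo $p$ to $\pm 2$ (using $ab=\gamma^2$), which is a unit since $p>r\ge 2$. Assembling with the Chinese Remainder Theorem then produces the required $v\in\{0,\ldots,Q_{\delta,L}-1\}$.

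The main obstacle is simultaneously enforcing the \emph{exact} $p$-adic valuations $\theta_p(Q_2)$ and $\theta_p(Q_3)$ at primes in the supports of $Q_2$ and $Q_3$ while meeting the congruences modulo $Q_1$. The disjointness of the prime supports of $Q_1,Q_2,Q_3$ is what lets these two sets of constraints interact cleanly, and the hypothesis \eqref{E1: p not dividing} is what guarantees both that the Hensel lifts are simple (so that $\gamma$, $2ab$, and the relevant linear factors are units at every $p>r$) and that the valuation-perturbation step is free of obstructions.
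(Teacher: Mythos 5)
Your proposal is correct and follows essentially the same route as the paper: prescribe $v$ prime by prime via CRT (with $v\equiv Q_1 \pmod{p^{2\theta_p(Q_1)}}$ for $p\le r$), force the exact valuations $\theta_p(Q_2)$ and $\theta_p(Q_3)$ by a Hensel-plus-perturbation step, choose admissible residues at the remaining primes in $(K,L]$, and verify the gcd and mod-$Q_1$ conditions using \eqref{E1: p not dividing} and the identity $(1-\gamma v)(1+\gamma v)+P_3(1,v)=2$. The only differences (perturbing a lifted root of $P_3(1,x)$ instead of Hensel-lifting a root of $P_3(1,x)-p^{\theta_p(Q_3)}$, and avoiding the residues $\{0,\pm\gamma^{-1}\}$ instead of fixing $v\equiv\kappa$) are cosmetic.
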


\begin{proof}
Let $p\in \P$ with $K<p \leq L$ and $\omega_{P_3}(p)=2$, and consider the 
polynomial $g(x):=P_3(1,x)-p^{\theta_p(Q_3)}$. 
Then $\textup{mod}\: p$ the polynomial
becomes $g(x)=P_3(1,x)$, and it has two simple roots
$z_1, z_2$ with $z_1 \not \equiv z_2 \pmod p$. Using  
Hensel's lemma, we may lift $z_1$ to a unique root 
$\zeta = \zeta (p, Q_3)$ of $g \mod {p^{\theta_p(Q_3)+1}}$.

Using the Chinese Remainder theorem we may then choose $v$ with 
$$0\leq v \leq \prod_{p\leq K} p^{4K} \prod_{K<p\leq L}p^{1+3L/2}-1\leq 
Q_{\delta,L}-1$$ 
such that 
\begin{itemize}
     \item $v\equiv Q_1 \pmod {p^{2 \theta_p(Q_1)}}$ for 
    $p\leq r$,
    \item $v\equiv \gamma ^{-1} - \gamma ^{-1}p^{\theta_p(Q_2)}$
    $\pmod {p^{\theta_p(Q_2)+1}}$ for $r< p\leq K$,
    \item $v\equiv \kappa \pmod {p}  $ for $K<p\leq L$ with $\omega_{P_3}(p)=0$,
    \item $v\equiv \zeta(p,Q_3) \pmod {p^{\theta_p(Q_3)+1}}$ for 
    $K<p\leq L$ with $\omega_{P_3}(p)=2$.
\end{itemize}  
For $p\leq r$, $v\equiv Q_1 \pmod{p^{2\theta_p(Q_1)}}$, so 
$p^{\theta_p(Q_1)}\parallel v$. For $r<p\leq K$, $v\equiv \gamma^{-1}
\pmod {p}$, so $(p,v)=1$. For $K<p \leq L$ with $\omega_{P_3}(p)=0$, 
$v\equiv \kappa \pmod {p}$, so in view of \eqref{E1: p not dividing} we 
get $(p,v)=1$. Finally, for 
$K<p \leq L$ with $\omega_{P_3}(p)=2$, we have $v\equiv \zeta(p,Q_3)
\pmod {p^{\theta_p(Q_3)+1}}$, so 
$v\equiv z_1 \pmod {p}$. If $p\mid v$, then $p\mid z_1$, which
implies that $P_3(1,z_1) \equiv 1 \pmod {p}$. On the other hand, 
$P_3(1,z_1)\equiv 0 \pmod {p}$, so we reach a contradiction. Therefore,
$(p,v)=1$. Combining all the above, we infer that 
$(Q_{\delta, L},v)=Q_1$. For each $p\leq r$, 
$\frac{v}{Q_1} \equiv 1 \pmod {p^{\theta_p(Q_1)}}$, so 
$\frac{v}{Q_1} \equiv 1 \pmod {Q_1}$. Therefore, $v$ satisfies
\eqref{E1: lemma_choice_of_v_1}.

For $p\leq r$, $1-\gamma v\equiv 1 \pmod{p}$, so 
$(p,1-\gamma v)=1$. For $r<p\leq K$, one sees that 
$1-\gamma v \equiv p^{\theta_p(Q_2)}
\pmod {p^{\theta_p(Q_2)+1}}$, so $p^{\theta_p(Q_2)} \parallel v$. 
For $K<p \leq L$ with $\omega_{P_3}(p)=0$, 
$1-\gamma v  \equiv 1-\kappa \gamma \pmod {p}$, which in view of 
\eqref{E1: p not dividing} implies $(p,-\gamma v)=1$.
Finally, for 
$K<p \leq L$ with $\omega_{P_3}(p)=2$, we have 
$1-\gamma v\equiv 1-\gamma \zeta(p,Q_3) \pmod{p^{\theta_p(Q_3)+1}}$, so 
$1-\gamma v\equiv1-\gamma  z_1 \pmod {p}$. 
If $p\mid 1-\gamma v$, then $p\mid 1-\gamma z_1$, which
implies that $P_2(1,z_1) \equiv 0 \pmod {p}$. Recall that   
$P_3(1,z_1)\equiv 0 \pmod {p}$, so adding we obtain that 
$2\equiv 0 \pmod p$, which contradicts 
\eqref{E1: p not dividing}. Therefore, $(p,1-\gamma v)=1$. 
Combining all the above, we infer that 
$(Q_{\delta, L},1-\gamma v)=Q_2$. From the choice of $v$ we have  
$1-\gamma v\equiv 1 \pmod {Q_1}$, so  
$\frac{1-\gamma v}{Q_2} \equiv Q_2 ^{-1} \pmod {Q_1}$, and we 
conclude that $v$ satisfies
\eqref{E2: lemma_choice_of_v_1}.

For $p\leq r$, $1+\gamma v\equiv 1 \pmod{p}$, so 
$(p,1+\gamma v)=1$. For $r<p\leq K$, if $p\mid 1+\gamma v$, then, 
since $p\mid 1-\gamma v$, we get that $p\mid 2$, which contradicts 
\eqref{E1: p not dividing}. Therefore, $(p,1+\gamma v)=1$.
For $K<p \leq L$ with $\omega_{P_3}(p)=0$, 
$1+\gamma v  \equiv 1+\kappa \gamma \pmod {p}$, which in view of 
\eqref{E1: p not dividing} implies that $(p,1+\gamma v)=1$.
Finally, for 
$K<p \leq L$ with $\omega_{P_3}(p)=2$,
if $p\mid 1+ \gamma v$, then as before we have 
$P_2(1,z_1) \equiv 0 \pmod {p}$, and since $P_3(1,z_1)\equiv 0 \pmod {p}$,
we get that $2\equiv 0 \pmod p$, contradicting 
\eqref{E1: p not dividing}. Therefore, $(p,1+\gamma v)=1$. 
Combining all the above, we obtain that 
$(Q_{\delta, L},1-\gamma v)=1$ and 
$1+\gamma v\equiv 1 \pmod {Q_1}$, so $v$ satisfies
\eqref{E3: lemma_choice_of_v_1}.

It remains to prove \eqref{E4: lemma_choice_of_v_1}.
For $p\leq r$, $P_3(1,v)\equiv 1 \pmod{p}$, so 
$(p,P_3(1,v))=1$. For $r<p\leq K$, if $p\mid P_3(1,v)$, then 
since $p\mid P_2(1,v)$, we get that $p\mid P_2(1,v)+P_3(1,v)=2$, 
which contradicts 
\eqref{E1: p not dividing}. Therefore, $(p, P_3(1,v))=1$.
For $K<p \leq L$ with $\omega_{P_3}(p)=0$, 
$P_3(1,v)  \equiv 1+\kappa^2 ab \pmod {p}$, which in view of 
\eqref{E1: p not dividing} implies that $(p, P_3(1,v))=1$.
Finally, for 
$K<p \leq L$ with $\omega_{P_3}(p)=2$, we have 
$v\equiv \zeta(p,Q_3) \pmod {p^{\theta_p(Q_3)+1}}$, which implies that
$P_3(1,v)-p^{\theta_p(Q_3)}\equiv 0 \pmod{p^{\theta_p(Q_3) +1}}$. 
We deduce that $p^{\theta_p(Q_3)}\parallel P_3(1,v)$.
Combining all the above,  we obtain that 
$(Q_{\delta, L},P_3(1,v))=Q_3$. In addition, since $v\equiv 0 \pmod {Q_1}$,
we have $P_3(1,v) \equiv 1 \pmod {Q_1}$, so 
$\frac{P_3(1,v)}{Q_3}\equiv Q_3 ^{-1} \pmod {Q_1}$, so $v$ satisfies
\eqref{E4: lemma_choice_of_v_1}. This concludes the proof 
of the lemma.
\end{proof}

For the remainder of \cref{Sec : a=c_P_2_reducible}, we suppress the  
dependence of $v$ on the parameters $\delta, r,K,L,Q_1,Q_2,Q_3$, and simply  
write $v$ in place of $v(\delta,r,K,L,Q_1,Q_2,Q_3)$. In several other cases  
we leave the dependence on the parameters explicit, at the cost of  
making some formulas less aesthetically pleasing, as omitting it would  
obscure the underlying arguments. 

Given $\delta>0$, $r, K, L, N\in \N$ with 
$r<K<L<N $, $Q_1 \in \Phi_r$, $Q_2\in \Phi_{r,K}$, 
$Q_3\in \Phi_{K,L,P_3}$, and $m,n\in \N$, let 
\begin{enumerate}
\item \label{R_1 def} $R_1 (\delta, r, K, L, N, Q_1, Q_2, Q_3,m,n):= 2a
({Q_{\delta,L}} m+1)
(\frac{Q_{\delta,L}}{Q_1}n+ \frac{v}{Q_1} )$,
\item \label{R_2 def} $R_2 (\delta, r, K, L, N, Q_1, Q_2, Q_3,m,n):= 
(\frac{Q_{\delta,L}}{Q_2} (m -\gamma n)+ 
\frac{1-\gamma v}{Q_2})({Q_{\delta,L}}
(m+\gamma n)+ 1+\gamma v )$,
\item \label{R_3 def} $R_3 (\delta, r, K, L, N, Q_1, Q_2, Q_3,m,n):= 
\frac{P_3(Q_{\delta,L} m +1, Q_{\delta,L} n +v )}{Q_3}$,
\end{enumerate}
where $v$ is as in \cref{lemma_choice_of_v_1}.
Then  for $j\in \{1,2,3\}$ we have 
\begin{equation}\label{eq_extr_1}
    P_j(Q_{\delta,L} m+1, Q_{\delta,L} n +v)= Q_j \,
    R_j (\delta, r, K, L, N, Q_1, Q_2, Q_3,m,n).
\end{equation}
Sometimes in the notation we suppress the dependence of $R_j$ on 
the parameters 
$\delta, r, K$, $L$, $N,$ $ Q_1,$  $Q_2, Q_3$, and we only write $R_j(m,n)$.

Observe that if $m>2\sqrt{ab}n=2 \gamma n$ and $n$ is sufficiently large, then since 
$0\leq v \leq Q_{\delta,L}-1$, we have that $Qm+1 > \sqrt{ab} (Qn+v)$, 
so $P_2(Qm+1, Qn+v)>0$, while for $j=1,3$ we have  $P_j(Qm+1, Qn+v)>0$ for all
$m,n \in \N$. For each $\delta\in(0,1/2)$, let 
\begin{equation}\label{S_delta_a=c_P_2_reduc}
    S_{\delta}:=\{(m,n)\in \N^2 : m> 2 \sqrt{ab}n, \: 
    |(P_j(m,n))^i -1|\leq \delta \,
\text{ for }    j=1,2,3\}.
\end{equation}

From \cref{Cor S_d dense} we have that $S_{\delta}$ has 
positive upper density.
When $P_2$ is reducible, \cref{T: main a=c} follows from the following 
result.

\begin{proposition}\label{P: main a=c P_2 reducible}
Let $(X,\CX,\mu,T_n)$  be a pretentious multiplicative action, let 
$F\in L^{\infty}(\mu)$ with $F\geq 0$, 
let $P_1,P_2,P_3$ be as in \eqref{quadratic_forms_a=c}, and suppose  that $P_2$ is reducible. 
Also, let $S_{\delta}$ be as in \eqref{S_delta_a=c_P_2_reduc}  and $v=v(\delta, r,K,L,Q_1,Q_2,Q_3)$ be  as in 
\cref{lemma_choice_of_v_1}.
Then 
    \begin{multline}\label{main_term_1}
    \suplim_{\delta, r,K,L,N}\:
    \cesE_{Q_{1}\in \Phi_{r}} \cesE_{Q_{2}\in \Phi_{r,K}}
    \cesE_{Q_{3}\in \Phi_{K,L,P_3}}
    \frac{1}{\overline{\textup{d}}(S_{\delta})} \cesE_{(m,n) \in [N]^2} 
    \1_{S_{\delta}}(m,n)\cdot \\
    \int_{X} F\cdot \prod_{j=1} ^3 
    T_{P_j(Q_{\delta,L} m+1, Q_{\delta,L} n +v)} F 
    \, d \mu \geq \Big(\int_X F \, d \mu\Big)^4.
    \end{multline}
\end{proposition}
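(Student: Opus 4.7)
The proof follows the three-fold decomposition sketched in \cref{Sec_proof_plan}. Set $\X_1 := \X_{\A}$, $\X_2 := \X_{p,\textup{fin.supp.}}$, and $\X_3 := \X_{p,\textup{fin.supp.},P_3}$, and for each $j \in \{1,2,3\}$ write $F = \E(F\mid \X_j) + F_{2,j}$. The plan is to replace the $j$-th factor $T_{P_j(Q_{\delta,L}m+1,Q_{\delta,L}n+v)}F$ in the integrand of \eqref{main_term_1} by $\E(F\mid\X_j)$ with an error that vanishes under $\suplim_{\delta,r,K,L,N}$. Once the three replacements are carried out, the integral reduces to $\int_X F \cdot \prod_{j=1}^3 \E(F\mid\X_j)\,d\mu$, which is bounded below by $\big(\int_X F\,d\mu\big)^4$ by \cref{chu-lemma} with $\ell=3$.

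For each $j$ the replacement is justified by combining a \emph{vanishing} estimate for $F_{2,j}$,
\begin{equation*}
\suplim_{\delta,r,K,L,N} \cesE_{(m,n)\in S_\delta \cap [N]^2}\, \Big\| \cesE_{Q_j} T_{P_j(Q_{\delta,L}m+1,Q_{\delta,L}n+v)}F_{2,j} \Big\|_{L^2(\mu)} = 0,
\end{equation*}
with a \emph{concentration} estimate for $\E(F\mid\X_j)$,
\begin{equation*}
\suplim_{\delta,r,K,L,N} \max_{Q_j} \cesE_{(m,n)\in S_\delta\cap[N]^2} \Big\| T_{P_j(Q_{\delta,L}m+1,Q_{\delta,L}n+v)}\E(F\mid\X_j) - \E(F\mid\X_j) \Big\|_{L^2(\mu)} = 0,
\end{equation*}
with the outer $Q_l$-averages ($l \neq j$) left implicit. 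Both are proved via \eqref{spec_eq_1}, combined with \cref{linear_conc} (for $j=1$ and for each of the two linear factors of $P_2 = (m-\gamma n)(m+\gamma n)$, exploiting \cref{lemma_choice_of_v_1}(i)--(iii)) and \cref{quad_conc} (for $j=3$, via \cref{lemma_choice_of_v_1}(iv)), whose hypotheses are met thanks to the factorization \eqref{eq_extr_1} and the congruences engineered into $v$. The vanishing estimate then follows because for $f$ in the support of $\sigma_{F_{2,j}}$, which by hypothesis lies outside $\A$, $\M_{p,\textup{fin.supp.}}$, or $\M_{p,\textup{fin.supp.},P_3}$ respectively, there is a prime $p_0$ in the prime-support of the relevant \Folner{} set on which $f(p_0) \neq \chi(p_0)p_0^{it}$; the asymptotic dilation-invariance of that set forces the $Q_j$-average of $f(Q_j)\overline{\chi(Q_j)}Q_j^{-it}$ to zero, exactly as in \eqref{factors_eq_7}. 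For the concentration estimate, on the support of $\sigma_{\E(F\mid\X_j)}$ the exponential factors $\exp(F_N(f,K))$ and $\exp(G_{P_3,N}(f,K))$ are asymptotically trivial by \cref{P: conc_L_2_factors}, and the remaining multiplicative factors $Q_{\delta,L}^{it}$ and $P_j(m,n)^{it}$ are $\delta$-close to $1$ thanks to \eqref{Q delta L def} and the definition of $S_\delta$ in \eqref{S_delta_a=c_P_2_reduc}.

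The main obstacle lies in the sequential bookkeeping: replacing one slot at a time produces cross-errors that must be bounded using Cauchy--Schwarz, the $L^\infty$-bound on $F$, and \cref{bilinear_lemma}, which converts the single-variable concentration produced by \cref{linear_conc} into the bilinear $(m,n)$-averages restricted to $S_\delta$. A subtle point is why the factor $\X_\A$ (and not the larger $\X_{p,\textup{fin.supp.}}$) is required for $j=1$: since $P_1(1,0) = 0$, any nontrivial Dirichlet character component surviving in $\sigma_{\E(F\mid\X_1)}$ would contribute through $\chi(P_1(Q_{\delta,L}m+1,Q_{\delta,L}n+v))$ with no mechanism to concentrate around $1$, whereas restricting to Archimedean characters $n^{it}$ reduces the question to control of $P_1(m,n)^{it}$, which is $\delta$-close to $1$ on $S_\delta$ by construction. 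Finally, the positivity $\overline{\textup{d}}(S_\delta) > 0$ from \cref{Cor S_d dense} is what makes the normalization $\frac{1}{\overline{\textup{d}}(S_\delta)}\cesE_{(m,n)\in[N]^2}\1_{S_\delta}(m,n)$ a genuine averaging operator compatible with $\suplim$.
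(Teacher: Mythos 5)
Your overall strategy coincides with the paper's: the same three factors $\X_{\A}$, $\X_{p,\textup{fin.supp.}}$, $\X_{p,\textup{fin.supp.},P_3}$, the same vanishing/concentration dichotomy for $F_{2,j}$ versus $\E(F\mid\X_j)$ (proved via \eqref{spec_eq_1}, \cref{linear_conc}, \cref{quad_conc}, the congruences of \cref{lemma_choice_of_v_1}, the sets $S_\delta$ and the choice of $Q_{\delta,L}$), and the final application of \cref{chu-lemma}. However, there is one step you leave "implicit" whose proposed justification would not work as stated. In \eqref{main_term_1} the shift $v=v(\delta,r,K,L,Q_1,Q_2,Q_3)$ depends on \emph{all three} $Q_j$'s, so each quotient $R_j=P_j(Q_{\delta,L}m+1,Q_{\delta,L}n+v)/Q_j$ depends on $Q_1,Q_2,Q_3$ simultaneously. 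Consequently you cannot pull the average $\cesE_{Q_j}$ onto the $j$-th factor alone (which is what your vanishing estimate $\lVert\cesE_{Q_j}T_{P_j(\cdot)}F_{2,j}\rVert_{L^2(\mu)}\to 0$ requires) by Cauchy--Schwarz, the $L^\infty$-bound and \cref{bilinear_lemma}: those tools do not remove the dependence of the \emph{other} two factors on $Q_j$. The paper needs a separate decoupling statement, \cref{L: main a=c P_2 reducible}, asserting that $T_{R_j(\ldots,Q_1,Q_2,Q_3,m,n)}F$ is asymptotically independent (in $L^2$, uniformly over the \Folner{} sets) of the two variables $Q_l$, $l\neq j$; this is itself proved by another pass through \cref{linear_conc}/\cref{quad_conc} using that the only dependence on the extraneous $Q_l$'s enters through $v$ and washes out into the $\exp(F_N)$, $\exp(G_{P_3,N})$ and distance terms. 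Only after replacing the extraneous $Q_l$'s by fixed choices $Q_{1,r},Q_{2,r,K},Q_{3,K,L}$ can the three averages be distributed onto their respective slots and your Steps run as described. (Two minor points: for $j=1$ the concentration on $\E(F\mid\X_\A)$ is handled directly, since $f=n^{it}$ gives $f(P_1(\cdot))=P_1(\cdot)^{it}$ exactly, no appeal to \cref{linear_conc} is needed; and the triviality of $\exp(F_N(f,L))$, $\exp(G_{P_3,N}(f,L))$ on the supports of the projected functions comes from the definitions \eqref{def_pretent_conc}, \eqref{def_pretent_conc_1} together with \cref{useful_rmk_1}, not from \cref{P: conc_L_2_factors}.)
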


\begin{proof}[Proof that \cref{P: main a=c P_2 reducible} implies 
\cref{T: main a=c} when $P_2$ is reducible]
Let $\e>0$. Using \eqref{main_term_1} we infer that there are 
$\delta\in (0,1/2)$, $r,K,L \in \N$, and $Q_1 \in \Phi_r$, 
$Q_2 \in \Phi_{r,K},$  $Q_3 \in \Phi_{r,K,P_3}$ such that 
\begin{multline}\label{E: prop implies thm 1}
    \limsup_{N\to \infty} \frac{1}{\overline{\textup{d}}(S_{\delta})}  \cesE_{(m,n) \in [N]^2}
    \1_{S_{\delta}}(m,n)\cdot  \int_{X} F\cdot \prod_{j=1} ^3
    T_{P_j(Q_{\delta,L} m+1, Q_{\delta,L} n +v)} F 
    \, d \mu \\ \geq \Big(\int_X F \, d \mu\Big)^4 -\frac{\e}{2}.
\end{multline}
From \eqref{E: prop implies thm 1} and the fact that $S_{\delta}$
has positive upper density, we get that the set 
\begin{equation}\label{E: prop implies thm 2}
    \Big\{ (m,n)\in S_{\delta} :  \int_{X} F\cdot \prod_{j=1} ^3
    T_{P_j(Q_{\delta,L} m+1, Q_{\delta,L} n +v)} F 
    \, d \mu \geq \Big(\int_X F \, d \mu\Big)^4 -\e\Big\}
\end{equation}
has positive upper density. Now, if a set $A\subset \N^2$ has positive 
upper density, and $Q,w_1, w_2 \in \N$, then $B=\{(Qm+w_1, Qn + w_2): 
(m,n)\in A\}$ also has positive upper density. In addition, as we 
commented earlier, if $(m,n)\in S_{\delta}$, then $m>2\sqrt{ab}n$, which
implies $Qm+1 > \sqrt{ab}(Qn+v)$. Combining the above with 
\eqref{E: prop implies thm 2} we deduce that the set
\begin{equation*}
    \{ (m,n)\in \N^2 : m>\sqrt{ab}n, \: 
    \int_X F \cdot \prod_{j=1} ^3 T_{P_j(m,n)} F \, d\mu
    \geq  \Big(\int_X F \, d \mu \Big)^4 - \e\}
\end{equation*}
has positive upper density. This concludes the proof.
\end{proof}

It remains to prove \cref{P: main a=c P_2 reducible}. For that we need the 
following lemma.

\begin{lemma}\label{L: main a=c P_2 reducible}
Let $(X, \X, \mu, T_n)$, $F$, and 
$R_1, R_2, R_3$ be as in \eqref{R_1 def}-\eqref{R_3 def}.
Then for every $\delta>0$, all sufficiently large $r\in \N$, and all  $K\in \N$ with $K>r$, we have 
\begin{align}
& \lim_{L, N} \, 
\max_{Q_1 \in \Phi_r, \; Q_2, Q_{2}' \in \Phi_{r,K}, \:Q_3, Q_{3}' \in \Phi_{K,L,P_3} } 
\cesE_{(m,n) \in [N]^2} 
    \label{L: main a=c P_2 reducible eq 1}\\ 
& \hspace{10mm} \lVert 
 T_{R_1(\delta, r, K, L, N, Q_1, Q_2, Q_3,m,n)} F - 
 T_{R_1(\delta, r, K, L, N, Q_1, Q_2 ', Q_3 ',m,n)} F
\rVert _{L^2(\mu)}=0, \nonumber \\
& \lim_{L, N}  \, 
\max_{Q_1, Q_1 ' \in \Phi_r, \; Q_2 \in \Phi_{r,K}, \:Q_3, Q_{3}' \in \Phi_{K,L,P_3} }
 \cesE_{(m,n) \in [N]^2} 
     \label{L: main a=c P_2 reducible eq 2} \\
& \hspace{10mm}  \lVert 
T_{R_2(\delta, r, K, L, N, Q_1, Q_2, Q_3,m,n)} F - 
T_{R_2(\delta, r, K, L, N, Q_1 ', Q_2, Q_3 ',m,n)} F
\rVert _{L^2(\mu)}=0, \nonumber  \\
& \lim_{ L, N}\, 
\max_{Q_1, Q_1 ' \in \Phi_r, \; Q_2, Q_2 ' \in \Phi_{r,K}, \:Q_3 
\in \Phi_{K,L, P_3} } 
 \cesE_{(m,n) \in [N]^2} 
     \label{L: main a=c P_2 reducible eq 3} \\
& \hspace{10mm} \lVert 
T_{R_3(\delta, r, K, L, N, Q_1, Q_2, Q_3,m,n)} F - 
T_{R_3(\delta, r, K, L, N, Q_1 ', Q_2 ', Q_3,m,n)} F
\rVert _{L^2(\mu)}=0. \nonumber 
\end{align}
\end{lemma}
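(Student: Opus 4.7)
My plan is to reduce each of the three estimates to a pointwise statement for $f\in\M_p$ via the spectral identity~\eqref{spec_eq_1}, and then invoke the concentration results \cref{linear_conc} and \cref{quad_conc} together with the congruences from \cref{lemma_choice_of_v_1} to extract the relevant cancellations.

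For the first identity, by \eqref{spec_eq_1} and since $(X,\CX,\mu,T_n)$ is pretentious,
\begin{multline*}
\lVert T_{R_1(\ldots,Q_2,Q_3,m,n)}F - T_{R_1(\ldots,Q_2',Q_3',m,n)}F\rVert_{L^2(\mu)}^2 \\
= \int_{\M_p} \bigl|f(R_1(\ldots,Q_2,Q_3,m,n)) - f(R_1(\ldots,Q_2',Q_3',m,n))\bigr|^2 \, d\sigma_F(f).
\end{multline*}
By Cauchy--Schwarz and dominated convergence (the integrand is bounded by $4$), the problem reduces to showing, for each $f\sim\chi\cdot n^{it}\in\M_p$, that
\begin{equation*}
\lim_{L,N}\max \cesE_{(m,n)\in[N]^2}\bigl|f(R_1(\ldots,Q_2,Q_3,m,n)) - f(R_1(\ldots,Q_2',Q_3',m,n))\bigr|=0,
\end{equation*}
where the maximum is over $Q_1\in\Phi_r$, $Q_2,Q_2'\in\Phi_{r,K}$ and $Q_3,Q_3'\in\Phi_{K,L,P_3}$. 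Fix such an $f$, and assume that $r$ is large enough that the conductor $q_\chi$ of $\chi$ divides $Q_1$. Factoring $R_1 = 2a\,(Q_{\delta,L}m+1)\bigl(\tfrac{Q_{\delta,L}}{Q_1}n+\tfrac{v}{Q_1}\bigr)$, I use complete multiplicativity of $f$ to split $f(R_1)$ as a product of three terms, and apply \cref{linear_conc} twice, to the factors $Q_{\delta,L}m+1$ (modulus $Q_{\delta,L}$) and $\tfrac{Q_{\delta,L}}{Q_1}n+\tfrac{v}{Q_1}$ (modulus $\tfrac{Q_{\delta,L}}{Q_1}$), taking the concentration parameter equal to $L$ in both cases. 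The coprimality $(\tfrac{v}{Q_1},\tfrac{Q_{\delta,L}}{Q_1})=1$ and the congruence $\tfrac{v}{Q_1}\equiv 1 \pmod{Q_1}$ that are required come from \cref{lemma_choice_of_v_1}(i); since $q_\chi\mid Q_1$, the latter yields $\chi(\tfrac{v}{Q_1})=1$. Invoking \cref{uniform_in_Q} to obtain a bound uniform over the finite collections of $Q_j$'s, one then gets
\begin{equation*}
\limsup_N \cesE_{(m,n)\in[N]^2}\bigl|f(R_1(\ldots,Q_2,Q_3,m,n))-\mathcal H(m,n)\bigr| \ll \D(f,\chi\cdot n^{it},L,\infty)+L^{-1/2},
\end{equation*}
where $\mathcal H(m,n)$ depends only on $f,a,Q_1,Q_{\delta,L},m,n$; the analogous bound for $R_1(\ldots,Q_2',Q_3')$ produces the same $\mathcal H(m,n)$. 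Subtracting and letting $L\to\infty$ forces the error to zero since $\D(f,\chi\cdot n^{it})<\infty$, and dominated convergence in $\sigma_F$ concludes the proof of the first identity.

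The arguments for the second and third identities are structurally identical. For the second, factor $R_2 = L_1'(m,n)\cdot L_2'(m,n)$ as in~\eqref{R_2 def}, and apply \cref{linear_conc} to each factor, treating one of $m,n$ as a parameter and the other as the summation variable. The required coprimalities come from \cref{lemma_choice_of_v_1}(ii)--(iii), while the congruences $\tfrac{1-\gamma v}{Q_2}\equiv Q_2^{-1}\pmod{Q_1}$ and $1+\gamma v\equiv 1\pmod{Q_1}$, combined with $q_\chi\mid Q_1$, ensure that the Dirichlet-character factors emerging from the concentration depend on $Q_2$ only (through $\chi(Q_2)^{-1}$ and $(Q_{\delta,L}/Q_2)^{it}$), not on $Q_1$ or $Q_3$. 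For the third, apply \cref{quad_conc} to $P_3$ with $Q=Q_{\delta,L}$, $a=1$, $b=v$; \cref{lemma_choice_of_v_1}(iv) gives $c=(P_3(1,v),Q_{\delta,L})=Q_3$ and $\tfrac{P_3(1,v)}{Q_3}\equiv Q_3^{-1}\pmod{Q_1}$, so the main term depends only on $f, Q_3, Q_{\delta,L}, m, n$, and the conclusion follows as before.

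The main obstacle is the bookkeeping: in each of the three cases one must verify that the congruences of \cref{lemma_choice_of_v_1} are strong enough to make the Dirichlet-character factors produced by the concentration estimates agree for any two choices of the ``other'' $Q_j$'s, while simultaneously verifying the coprimality hypotheses required by \cref{linear_conc} and \cref{quad_conc}. This is precisely where the delicate construction of $v$ in \cref{lemma_choice_of_v_1} pays off; a minor additional subtlety is that, for the linear concentration applied to a linear form whose intercept depends on the other variable (as in the treatment of $R_2$), one must apply the estimate for each value of the parameter and interchange limits, which is justified by the uniformity noted in \cref{uniform_in_Q}.
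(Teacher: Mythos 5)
Your overall strategy is the paper's: reduce via \eqref{spec_eq_1} (plus Cauchy--Schwarz and bounded convergence) to a pointwise statement for each $f\in\M_p$, then feed the congruence/coprimality information of \cref{lemma_choice_of_v_1} into \cref{linear_conc} and \cref{quad_conc} so that the main terms do not depend on the ``other'' $Q_j$'s. For \eqref{L: main a=c P_2 reducible eq 1} and \eqref{L: main a=c P_2 reducible eq 3} your argument is essentially the paper's (you phrase it as a triangle inequality against a common main term $\mathcal H$, while the paper estimates the product $f(R_j)\overline{f(R_j')}$ and is left with $|\exp(2\Re F_N(f,L))-1|$ resp.\ $|\exp(2\Re G_{P_3,N}(f,L))-1|$; the two formulations are equivalent, and your application of \cref{quad_conc} with $c=(P_3(1,v),Q_{\delta,L})=Q_3$, $P_c=P_3/Q_3$ is exactly what the paper does).

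The gap is in your treatment of \eqref{L: main a=c P_2 reducible eq 2}. You propose to handle each factor of $R_2$ by ``treating one of $m,n$ as a parameter'' and applying \cref{linear_conc} in the other variable, interchanging limits via \cref{uniform_in_Q}. This does not work as stated: \cref{uniform_in_Q} only gives uniformity over a \emph{finite} collection of moduli (and shifts) fixed in advance of the $N$-limit, whereas in your setup the shift of the progression in $n$ is $c_m=\frac{Q_{\delta,L}}{Q_2}m+\frac{1-\gamma v}{Q_2}$ with $m$ ranging over $[N]$, so the number of shifts grows with $N$ and you cannot apply the fixed-shift $\limsup_{N\to\infty}$ statement for each $m$ and then average; the rate in \cref{linear_conc} is not asserted to be uniform in the shift. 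In addition, for fixed $m$ the progression $-\gamma\frac{Q_{\delta,L}}{Q_2}n+c_m$ has negative leading coefficient and eventually takes negative values, which is outside the scope of \cref{linear_conc} for $f\colon\N\to\U$. The paper's proof avoids both problems: it applies \cref{linear_conc} to the genuine one-variable progressions $k\mapsto \frac{Q_{\delta,L}}{Q_2}k+\frac{1-\gamma v}{Q_2}$ and $k\mapsto Q_{\delta,L}k+1+\gamma v$ (whose shifts are fixed once $Q_1,Q_2,Q_3$ are), and then uses \cref{bilinear_lemma} (with the even-sequence convention) to transfer these concentration estimates to the two-variable averages along the linear forms $m-\gamma n$ and $m+\gamma n$ over $(m,n)\in[N]^2$, at the cost of an absolute factor. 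Your argument for \eqref{L: main a=c P_2 reducible eq 2} becomes correct once you replace the ``parameter plus interchange of limits'' step by this application of \cref{bilinear_lemma}; the character bookkeeping you describe ($\chi(\frac{1-\gamma v}{Q_2})=\overline{\chi(Q_2)}$, $\chi(1+\gamma v)=1$, using $q_\chi\mid Q_1$ and $q_\chi\mid Q_1'$) is then exactly as in the paper.
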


\begin{remark}
\cref{L: main a=c P_2 reducible} essentially says that 
in the integral in \eqref{main_term_1},
the first term is independent of $Q_2, Q_3$, the second term 
is independent of $Q_1, Q_3$ and the third term 
is independent of $Q_1, Q_2$.
\end{remark}

\begin{proof}
Let $\delta>0$ be fixed. We can assume that 
$\lVert F \rVert_{L^{\infty}(\mu)}\leq 1$.
We start by proving \eqref{L: main a=c P_2 reducible eq 1}.
Note by \eqref{spec_eq_2} we have
\begin{multline*}
\lVert 
T_{R_1(\delta, r, K, L, N, Q_1, Q_2, Q_3,m,n)} F - 
T_{R_1(\delta, r, K, L, N, Q_1, Q_2 ', Q_3 ',m,n)} F
\rVert _{L^2(\mu)}= \\ 
\lVert 
f(R_1(\delta, r, K, L, N, Q_1, Q_2, Q_3,m,n)) - 
f(R_1(\delta, r, K, L, N, Q_1, Q_2 ', Q_3 ',m,n)) 
\rVert _{L^2(\sigma_{F}(f))},
\end{multline*}
so after applying the Cauchy-Schwarz inequality and Fatou's lemma 
multiple times, one 
sees that in order to establish \eqref{L: main a=c P_2 reducible eq 1}, it
suffices to prove that for each $f\in \M_p $ we have 
\begin{multline}\label{eq_conc_1}
\lim_{L, N}\, 
\max_{Q_1 \in \Phi_r, \; Q_2, Q_{2}' \in \Phi_{r,K}, \:Q_3, Q_{3}' \in \Phi_{K,L,P_3}} \cesE_{(m,n) \in [N]^2} 
  \\
|f(R_1(\delta, r, K, L, N, Q_1, Q_2, Q_3,m,n))
\overline{f(R_1(\delta, r, K, L, N, Q_1, Q_2 ', Q_3 ',m,n))}-1 |
=0.
\end{multline}
Let $f\in \M_p$ and let $\chi,t$ so that $f\sim \chi\cdot n^{it}$. Let  
$q_{\chi}$ be the conductor of $\chi$, and suppose that
$r,L$ are large enough so that $q_{\chi}$ divides 
$ \frac{Q_{\delta, L}}{Q_1}, Q_1$ for every $Q_1 \in \Phi_r$. Using 
\cref{lemma_choice_of_v_1} \eqref{E1: lemma_choice_of_v_1} and
applying \cref{linear_conc} and \cref{uniform_in_Q} we get that 
\begin{align*}
&\limsup_{N\to \infty} \max_{Q_1 \in \Phi_r, \; Q_2, Q_{2}' \in \Phi_{r,K}, \:Q_3, Q_{3}' \in \Phi_{K,L,P_3}}
\cesE_{(m,n) \in [N]^2}  \\
&|f(R_1(\delta, r, K, L, N, Q_1, Q_2, Q_3,m,n)) \cdot 
 \overline{f(R_1(\delta, r, K, L, N, Q_1, Q_2 ', Q_3 ',m,n))}-1 | \\
&\ll \limsup_{N\to \infty} 
|\exp(2\Re (F_N(f,L))) -1| + 
\D(f, \chi \cdot n^{it}; L, \infty)
+L^{-1/2}.
\end{align*}
Since $f\sim \chi \cdot n^{it}$, we have that $2\Re(F_N(f,L))$ converges to
$2\sum_{p>L} \frac{1}{p}(\Re(f(p) \overline{\chi(p)}p^{-it}) -1)$ as $N\to\infty$. Then
\eqref{eq_conc_1} follows by
taking $L\to \infty$ in the last relation. This concludes the proof 
of \eqref{L: main a=c P_2 reducible eq 1}. 

For the proof of \eqref{L: main a=c P_2 reducible eq 2}, arguing as 
before, it suffices to prove that for every 
$f\in \M_p $
\begin{multline}\label{eq_conc_2}
\lim_{ L, N}\, 
\max_{Q_1, Q_1 ' \in \Phi_r, \; Q_2, \in \Phi_{r,K}, \:Q_3, Q_{3}' \in \Phi_{K,L,P_3} } 
\cesE_{(m,n) \in [N]^2} 
   \\
| f({R_2(\delta, r, K, L, N, Q_1, Q_2, Q_3,m,n)}) \cdot
\overline{f(R_2(\delta, r, K, L, N, Q_1 ', Q_2, Q_3 ',m,n))}
-1| =0.
\end{multline}
Again, let $f\in \M_p$, let $\chi,t$ so that $f\sim \chi\cdot n^{it}$,
and let $q_{\chi}$ be the conductor of $\chi$. 
Suppose that $r,L$ are sufficiently large so that $q_{\chi}$ divides 
$\frac{Q_{\delta, L}}{Q_{2}}, {Q_1}$ for every $Q_1 \in \Phi_r$ and 
$Q_{2} \in \Phi_{r,K}$.
Using \cref{lemma_choice_of_v_1} \eqref{E2: lemma_choice_of_v_1},
\eqref{E3: lemma_choice_of_v_1} and
applying \cref{linear_conc}, \cref{uniform_in_Q} and 
\cref{bilinear_lemma} we get that 
\begin{align*}
&\limsup_{N\to \infty} 
\max_{Q_1, Q_1 ' \in \Phi_r, \; Q_2 \in \Phi_{r,K}, \:Q_3, Q_{3}' \in \Phi_{K,L,P_3} }  
\cesE_{(m,n) \in [N]^2}  \\ 
&| f({R_2(\delta, r, K, L, N, Q_1, Q_2, Q_3,m,n)})\cdot  
\overline{f(R_2(\delta, r, K, L, N, Q_1 ', Q_2, Q_3 ',m,n))}
-1| \\ & \ll_{a,b}
\limsup_{N\to \infty} |\exp(4\Re (F_N(f,L))) -1| + 
\D(f, \chi \cdot n^{it}; L, \infty) +L^{-1/2},
\end{align*}
so again \eqref{eq_conc_2} follows by taking $L\to \infty$ in the last 
relation and using that $f\sim \chi \cdot n^{it}$.

Finally, to prove \eqref{L: main a=c P_2 reducible eq 3},
it suffices to prove that for every 
$f\in \M_p $ we have 
\begin{multline}\label{eq_conc_3}
\lim _{ L, N}\, 
\max_{Q_1, Q_1 ' \in \Phi_r, \; Q_2, Q_2' \in \Phi_{r,K}, \:Q_3 \in \Phi_{K,L,P_3} }  
\cesE_{(m,n) \in [N]^2} 
    \\
| f({R_3(\delta, r, K, L, N, Q_1, Q_2, Q_3,m,n)}) \cdot 
\overline{f(R_3(\delta, r, K, L, N, Q_1 ', Q_2 ', Q_3,m,n))}
-1| =0.
\end{multline}
Let $f\in \M_p$, let $\chi,t$ so that $f\sim \chi\cdot n^{it}$, and let 
$q_{\chi}$ be the conductor of $\chi$. 
Suppose that $r,L$ are sufficiently large so that $q_{\chi}$ divides 
$\frac{Q_{\delta, L}}{{Q}_{3}}, {Q_1}$ for every 
$Q_1 \in \Phi_r$ and $Q_{3} \in \Phi_{K,L,P_3}$.
Then, using \cref{lemma_choice_of_v_1} \eqref{E4: lemma_choice_of_v_1},
and applying \cref{quad_conc} and \cref{uniform_in_Q} we get that 
\begin{align*}
&\limsup_{N\to \infty} \max_{Q_1, Q_1 ' \in \Phi_r, \; Q_2, Q_2' \in \Phi_{r,K}, \:Q_3 \in \Phi_{K,L,P_3} }  
\cesE_{(m,n)\in[N]^2} \\
&| f({R_3(\delta, r, K, L, N, Q_1, Q_2, Q_3,m,n)})\cdot 
\overline{f(R_3(\delta, r, K, L, N, Q_1 ', Q_2 ', Q_3,m,n))}
-1|\\ & \ll_{P_3}
\limsup_{N\to \infty} |\exp(2\Re(G_{P_3,N}(f,L)))  -1| +
\D(f, \chi \cdot n^{it}; L, \infty)+L^{-1/2}. 
\end{align*}
Again, using $f\sim \chi \cdot n^{it}$, we have that 
$2\Re(G_{P_3,N}(f,L))$ converges  as $N\to\infty$ to
$$2\sum_{p>L} \frac{\omega_{P_3}(p)}{p}\big(\Re(f(p) 
\overline{\chi(p)}p^{-it}) -1\big),$$ and
\eqref{eq_conc_3} follows by
taking $L\to \infty$ in the last relation. This concludes the proof 
of the lemma. 
\end{proof}

We are now ready to prove \cref{P: main a=c P_2 reducible}.  The argument is somewhat lengthy, so we divide it into several steps.

\begin{proof}[Proof of \cref{P: main a=c P_2 reducible}]
We can assume  that 
$\lVert F\rVert_{L^{\infty}(\mu)}\leq 1$. In the proof, it is always 
implicit that $r<K<L<N$, and that each of the previous variables is 
sufficiently large depending on the smaller ones.

In view of \eqref{eq_extr_1}, the quantity in 
$\suplim_{\delta,r,K,L,N}$ on the left-hand side in 
\eqref{main_term_1} is equal to 
\begin{multline*}
\cesE_{Q_{1}\in \Phi_{r}} \cesE_{Q_{2}\in \Phi_{r,K}}
    \cesE_{Q_{3}\in \Phi_{K,L,P_3}} \frac{1}{\overline{\textup{d}}(S_{\delta})} \cesE_{(m,n) \in [N]^2} 
    \1_{S_{\delta}}(m,n) \cdot \\
    \int_{X}  F \cdot T_{Q_1 R_1(m,n)} F \cdot 
    T_{Q_2 R_2(m,n)} F \cdot 
    T_{{Q}_3 R_3(m,n)} F
  \,  d \mu.  
\end{multline*}
For each $r,K,L\in \N$, choose any $Q_{1,r}\in \Phi_r$,  
$Q_{2, r, K} \in \Phi_{r,K}$ and $Q_{3,K,L}\in \Phi_{K,L,P_3}$. 
Using \cref{L: main a=c P_2 reducible}, one sees that in order to prove
\eqref{main_term_1}, it suffices to prove that 
\begin{multline}\label{main_term_3}
\suplim_{\delta,r, K, L,N}
    \int_{X}  F \cdot  \frac{1}{\overline{\textup{d}}(S_{\delta})} 
    \cesE_{(m,n) \in [N]^2} 
    \1_{S_{\delta}}(m,n) \cdot \\
     \Big( \cesE_{Q_{1}\in \Phi_{r}} T_{Q_1 R_1(\delta, r, K, L, N, Q_1, Q_{2,r,K} , Q_{3,K,L} ,m,n)} F \Big)
    \cdot \Big( \cesE_{Q_{2}\in \Phi_{r,K}}  
     T_{Q_2 R_2(\delta, r, K, L, N, Q_{1,r}, 
    Q_2 , Q_{3,K,L},m,n)} F \Big) \cdot \\ \Big( \cesE_{Q_{3}\in \Phi_{K,L,P_3}}
    T_{{Q}_3 R_3(\delta, r, K, L, N, Q_{1,r}, Q_{2,r,K} , Q_3 ,m,n)} F \Big)
    d \mu \geq \Big(\int_X F \, d \mu\Big)^4 .
\end{multline}

Our plan is to handle each of the three averages separately,  
showing in each case that it can be replaced by the conditional  
expectation of $F$ with respect to a suitable factor of the action.  
While these steps share many similarities, there are also key  
technical differences. For the sake of completeness, we present the  
full details, even at the expense of relatively  longer argument.  

\smallskip 

{\bf Step 1 (Dealing with the average over $Q_1$).} Write $F=F_1 + F_2$, where $F_1:=\E(F | {\X}_{\A})$ and $F_2 \perp F_1$. 
Then the integral on the left-hand side of \eqref{main_term_3} is equal to 
\begin{multline}\label{main_term_4}
    \sum_{j\in \{1,2\}}\int_{X}  F \cdot  \frac{1}{\overline{\textup{d}}(S_{\delta})} 
    \cesE_{(m,n) \in [N]^2} 
    \1_{S_{\delta}}(m,n) \cdot \\
     \Big( \cesE_{Q_{1}\in \Phi_{r}} T_{Q_1 R_1(\delta, r, K, L, N, Q_1, Q_{2,r,K} , Q_{3,K,L} ,m,n)} F_j \Big)
    \cdot  \Big( \cesE_{Q_{2}\in \Phi_{r,K}}  
     T_{Q_2 R_2(\delta, r, K, L, N, Q_{1,r}, 
    Q_2 , Q_{3,K,L},m,n)} F \Big) \cdot \\  \Big( \cesE_{Q_{3}\in \Phi_{K,L,P_3}}
    T_{{Q}_3 R_3(\delta, r, K, L, N, Q_{1,r}, Q_{2,r,K} , Q_3 ,m,n)} F \Big)
    \, d \mu. 
\end{multline}

The term corresponding to $j=2$ in \eqref{main_term_4} is bounded by 
\begin{equation}\label{aux_quant_1}
\frac{1}{\overline{\textup{d}}(S_{\delta})} 
    \cesE_{(m,n) \in [N]^2}  \big \lVert 
\cesE_{Q_{1}\in \Phi_{r}} 
T_{Q_1 R_1(\delta, r, K, L, N, Q_1, Q_{2,r,K} , Q_{3,K,L} ,m,n)} F_2
\big \rVert_{L^2(\mu)}.
\end{equation}
We will show that the quantity in \eqref{aux_quant_1} goes to $0$ as 
$N\to \infty, L \to \infty, K \to \infty, r \to \infty$. 
Since $\sigma_{F_2}$ is supported on
$\M_p \setminus \A$, using \eqref{spec_eq_1} and applying Fatou's lemma 
multiple times, one sees 
that it suffices to prove that for every 
$f\in \M_p \setminus \A$, we have 
\begin{equation*}
\lim _{r,K,L,N} \,
\cesE_{(m,n) \in [N]^2} \big | 
\cesE_{Q_{1}\in \Phi_{r}} f({Q_1 
R_1(\delta, r, K, L, N, Q_1, Q_{2,r,K} , Q_{3,K,L} ,m,n)}) \big |=0.
\end{equation*}
Let $f\in \M_p \setminus \A$, let $\chi, t$ so that $f\sim \chi \cdot n^{it}$,
and suppose that $r,L$ are large enough so that the conductor $q_{\chi}$ of 
$\chi$ divides $\frac{Q_{\delta, L}}{Q_1}$, $Q_1$ for all 
$Q_1 \in \Phi_r$. 
Using \cref{lemma_choice_of_v_1} \eqref{E1: lemma_choice_of_v_1} and
applying \cref{linear_conc} and \cref{uniform_in_Q}, we deduce that 
\begin{align}\label{eq_useful_1}
&\limsup_{N\to \infty} \cesE_{(m,n) \in [N]^2}  \big | 
\cesE_{Q_{1}\in \Phi_{r}} f({Q_1 
R_1(\delta, r, K, L, N, Q_1, Q_{2,r,K} , Q_{3,K,L} ,m,n)})
\big | \ll \\
&\limsup_{N\to \infty} \cesE_{(m,n) \in [N]^2}  \big | 
\cesE_{Q_{1}\in \Phi_{r}}  f(Q_1) \,  Q_1 ^{-it} \,  f(2a) 
\, Q_{\delta,L}^{2it} \, m^{it} \, n^{it}
\exp(2F_N(f,L)) \big |   \nonumber \\
& \hspace{20mm} + \D(f, \chi \cdot n^{it}; L, \infty)
+L^{-1/2}
= \nonumber \\
& \limsup_{N\to \infty} |\exp(2F_N(f,L))|\cdot 
\big| \cesE_{Q_1 \in \Phi_r} f(Q_1)\, Q_1 ^{-it}\, 
\big|+ \D(f, \chi \cdot n^{it}; L, \infty)
+L^{-1/2}. \nonumber
\end{align}
Note that $|\exp(2F_N(f,L))|=\exp(2\Re(F_N(f,L))) \to |\exp(-2\D^2(f, \chi \cdot n^{it};L, \infty))|$ as $N\to \infty$.
Since $f \in \M_p \setminus \A$, we have ${f(n)} \, n^{-it}\neq 1$,  and one easily deduces  that 
\begin{equation*}
   \lim_{r\to\infty} \cesE_{Q_1 \in \Phi_r} {f(Q_1)} \, Q_1^{-it} = 0. 
\end{equation*}
Combining this with \eqref{eq_useful_1} we infer that  for every $\delta>0$ we have
\begin{equation}\label{aux_eq_3}
    \lim_{r,K,L,N}\, 
    \cesE_{(m,n) \in [N]^2}  \big \lVert 
\cesE_{Q_{1}\in \Phi_{r}} 
T_{Q_1 R_1(\delta, r, K, L, N, Q_1, Q_{2,r,K} , Q_{3,K,L} ,m,n)} F_2
\big \rVert_{L^2(\mu)}=0.
\end{equation}
On the other hand, for the term corresponding to $j=1$ in 
\eqref{main_term_4} we claim that 
\begin{multline}\label{eq_useful_2}
    \lim_{\delta, r, K, L, N}\, 
    \max_{Q_1 \in \Phi_r} 
    \frac{1}{\overline{\textup{d}}(S_{\delta})}\cdot  
    \cesE_{(m,n) \in [N]^2} \1_{S_{\delta}}(m,n)\cdot \\ 
    \lVert 
    T_{Q_1 R_1(\delta, r, K, L, N, Q_1, Q_{2,r,K} , Q_{3,K,L} ,m,n)}
    F_1 -F_1 \rVert_{L^2(\mu)} =0.
\end{multline}
To see this, observe that for $f\in \A$ with $f=n^{it}$, using Taylor 
expansion we have 
\begin{multline}\label{eq_useful_5}
\frac{1}{\overline{\textup{d}}(S_{\delta})} 
    \cesE_{(m,n) \in [N]^2} \1_{S_{\delta}}(m,n) |f(Q_1) 
f(R_1(\delta, r, K, L, N, Q_1, Q_{2,r,K} , Q_{3,K,L} ,m,n)) - 1 | \\ 
= \frac{1}{\overline{\textup{d}}(S_{\delta})} 
    \cesE_{(m,n) \in [N]^2} \1_{S_{\delta}}(m,n)\,  | Q_{\delta, L}^{2it}\,  P_1(m,n)^{it} -1| 
    +\oh_{t,Q_1, Q_2, Q_3, N\to \infty}(1). 
\end{multline}
Using \eqref{Q delta L def} and \eqref{S_delta_a=c_P_2_reduc} we deduce 
that 
\begin{equation*}
    \frac{1}{\overline{\textup{d}}(S_{\delta})} 
    \cesE_{(m,n) \in [N]^2} \1_{S_{\delta}}(m,n) | Q_{\delta, L}^{2it} \, P_1(m,n)^{it} -1| \ll_{t} \delta 
    \frac{1}{\overline{\textup{d}}(S_{\delta})} 
    \cesE_{(m,n) \in [N]^2} \1_{S_{\delta}}(m,n),
\end{equation*}
so the quantity in \eqref{eq_useful_5} goes to $0$ when we take $\lim_{\delta, r, K, L, N}$.
Equation \eqref{eq_useful_2} then follows from \eqref{spec_eq_2} and
the fact that the spectral measure of
$F_1$ is supported on $\A$.

Using \eqref{main_term_4}, \eqref{aux_eq_3}, \eqref{eq_useful_2} we see 
that in order to prove \eqref{main_term_3}, it suffices to show that
\begin{multline}\label{main_term_5}
\suplim_{\delta,r, K, L,N}
    \int_{X}  F \cdot F_1 \cdot  \frac{1}{\overline{\textup{d}}(S_{\delta})} 
    \cesE_{(m,n) \in [N]^2} 
    \1_{S_{\delta}}(m,n) \cdot \\
\Big( \cesE_{Q_{2}\in \Phi_{r,K}}  
     T_{Q_2 R_2(\delta, r, K, L, N, Q_{1,r}, 
    Q_2 , Q_{3,K,L},m,n)} F \Big) \cdot \\ \Big( \cesE_{Q_{3}\in \Phi_{K,L,P_3}}
    T_{{Q}_3 R_3(\delta, r, K, L, N, Q_{1,r}, Q_{2,r,K} , Q_3 ,m,n)} F \Big)
   \,  d \mu \geq \Big(\int_X F \, d \mu\Big)^4 .
\end{multline}

{\bf Step 2 (Dealing with the average over $Q_2$).}  Writing $F=F_3 + F_4$ where $F_3:=\E(F | \X_{p, \textup{fin.supp.}})$ 
and $F_4 \perp F_3$, one sees that the main term
\eqref{main_term_5} is equal to

\begin{multline}\label{main_term_6}
    \sum_{j\in \{3,4\}}\int_{X}  F \cdot F_1 \cdot  \frac{1}{\overline{\textup{d}}(S_{\delta})} 
    \cesE_{(m,n) \in [N]^2} 
    \1_{S_{\delta}}(m,n)    \cdot \\
     \Big( \cesE_{Q_{2}\in \Phi_{r,K}}  
     T_{Q_2 R_2(\delta, r, K, L, N, Q_{1,r}, 
    Q_2 , Q_{3,K,L},m,n)} F_j \Big) \cdot \\
    \Big( \cesE_{Q_{3}\in \Phi_{K,L,P_3}}
    T_{{Q}_3 R_3(\delta, r, K, L, N, Q_{1,r}, Q_{2,r,K} , Q_3 ,m,n)} F \Big)
    \, d \mu. 
\end{multline}
The term corresponding to $j=4$ in \eqref{main_term_6} is bounded by 
\begin{equation}\label{aux_quant_4}
\frac{1}{\overline{\textup{d}}(S_{\delta})} 
    \cesE_{(m,n) \in [N]^2}  \big \lVert 
\cesE_{Q_{2}\in \Phi_{r,K}} 
T_{Q_2 R_2(\delta, r, K, L, N, Q_{1,r},  Q_2 , Q_{3,K,L},m,n)} F_4
\big \rVert_{L^2(\mu)}.
\end{equation}
We will show that the quantity in \eqref{aux_quant_4} goes to $0$ as 
$N\to \infty, L \to \infty, K \to \infty, r \to \infty$. 
Since $\sigma_{F_4}$ is supported on
$\M_p \setminus \M_{p, \textup{fin.supp.}}$,  using \eqref{spec_eq_1}
and applying Fatou's lemma multiple times, we see 
that it suffices to show that if 
$f\in \M_p \setminus \M_{p, \textup{fin.supp.}}$, then
\begin{equation*}
\lim _{r,K,L,N} \,
\cesE_{(m,n) \in [N]^2} \big | 
\cesE_{Q_{2}\in \Phi_{r,K}} f(Q_2 
R_2(\delta, r, K, L, N, Q_{1,r},  Q_2 , Q_{3,K,L},m,n)) \big |=0.
\end{equation*}
Let $f\in \M_p \setminus \M_{p, \textup{fin.supp.}}$, 
let $\chi, t$ so that $f\sim \chi \cdot n^{it}$,
and suppose that $r,L$ are large enough so that the conductor $q_{\chi}$ of 
$\chi$ divides $Q_{1,r}$ $\frac{Q_{\delta, L}}{Q_2}$ for all $Q_2 \in \Phi_{r,K}$. 
Using \cref{lemma_choice_of_v_1} \eqref{E2: lemma_choice_of_v_1},
\eqref{E3: lemma_choice_of_v_1}, and
applying \cref{linear_conc}, \cref{uniform_in_Q},
and \cref{bilinear_lemma}, we deduce that 
\begin{align}\label{eq_useful_3}
&\limsup_{N\to \infty} \cesE_{(m,n) \in [N]^2}  \big | 
\cesE_{Q_{2}\in \Phi_{r,K}} f({Q_2
R_2(\delta, r, K, L, N, Q_{1,r}, Q_{2} , Q_{3,K,L} ,m,n)})
\big | \ll_{a,b} \\
&\limsup_{N\to \infty} \cesE_{(m,n) \in [N]^2}  \big | 
\cesE_{Q_{2}\in \Phi_{r,K}}  f({Q_2})\,  \overline{\chi(Q_2)} \, Q_2 ^{-it}
  \,    P_2(m,n)^{it} \, Q_{\delta, L}^{2it}
     \exp(2F_N(f,L)) \big |   \nonumber \\
& \hspace{20mm} + \D(f, \chi \cdot n^{it}; L, \infty)
+L^{-1/2}
= \nonumber \\
& \limsup_{N\to \infty} |\exp(2F_N(f,L))|\cdot 
\big| \cesE_{Q_2 \in \Phi_{r,K}} f(Q_2)\, \overline{\chi(Q_2)} \, Q_2 ^{-it}
\big|+ \D(f, \chi \cdot n^{it}; L, \infty)
+L^{-1/2}. \nonumber
\end{align}
Using that $f\sim \chi \cdot n^{it}$, as before we infer that 
$\lim_{L\to \infty} \lim_{N\to \infty} |\exp(2F_N(f,L))|=1$.
Since $f \in \M_p \setminus \M_{p, \textup{fin.supp.}}$, for each $r$ there is 
$p>r$ so that $f(p) \neq \chi(p)\,  p^{it}$, and since $\Phi_{r,K}$ is asymptotically
invariant under dilation by $p$ as $K\to \infty$ we have 
\begin{multline*}
    {f(p)}\, \overline{\chi(p)}\, p^{-it} \,
    \cesE_{Q_{2} \in \Phi_{r,K}} f(Q_{2})\, \overline{\chi(Q_{2})}\, Q_{2} ^{-it}
    =
    \cesE_{Q_{2} \in p \Phi_{r,K}} f(Q_{2}) \, \overline{\chi(Q_{2})} \,Q_{2} ^{-it}
    \\ =
    \cesE_{Q_{2} \in  \Phi_{r,K}} f(Q_{2})\, \overline{\chi(Q_{2})} \, Q_{2} ^{-it}
   + \oh_{K\to \infty}(1).
\end{multline*}
Using $f(p) \neq \chi(p)\, p^{it}$, the above implies that 
\begin{equation}\label{eq_aux_5}
   \lim_{K\to \infty} \cesE_{Q_{2} \in  \Phi_{r,K}} \, f(Q_{2}) 
 \,   \overline{\chi(Q_{2})} \, Q_{2} ^{-it} = 0.
\end{equation}
Combining this with \eqref{eq_useful_3} we infer that 
\begin{equation}\label{aux_eq_4}
    \lim_{r,K,L,N}\, 
    \cesE_{(m,n) \in [N]^2}  \big \lVert 
\cesE_{Q_{2}\in \Phi_{r,K}} 
T_{Q_2 R_2(\delta, r, K, L, N, Q_{1,r}, Q_{2} , Q_{3,K,L} ,m,n)} F_4
\big \rVert_{L^2(\mu)}=0.
\end{equation}
On the other hand, for the term corresponding to $j=3$ in 
\eqref{main_term_6}, we claim that 
\begin{multline}\label{eq_useful_4}
    \lim_{\delta, r, K, L, N}\, 
    \max_{Q_2 \in \Phi_{r,K}} 
    \frac{1}{\overline{\textup{d}}(S_{\delta})}\cdot  
    \cesE_{(m,n) \in [N]^2} \1_{S_{\delta}}(m,n)\cdot \\ 
    \lVert 
    T_{Q_2 R_2(\delta, r, K, L, N, Q_{1,r}, Q_{2} , Q_{3,K,L} ,m,n)} F_3
    -F_3 \rVert_{L^2(\mu)} =0.
\end{multline}

To see this, let $f\in M_{p, \textup{fin.supp.}}$, and
let $\chi, t$ and $r_0 \in \N$ so that 
$f(p)=\chi(p)\, p^{it}$ for all primes $p>r_0$. Also,
suppose that $r,L$ are sufficiently
large so that $r>r_0$ and the conductor $q_{\chi}$ of $\chi$ 
divides $Q_{1,r}$ and $\frac{Q_{\delta, L}}{Q_2}$  for every
$Q_{2} \in \Phi_{r,K}$.
Using \cref{lemma_choice_of_v_1} \eqref{E2: lemma_choice_of_v_1},
\eqref{E3: lemma_choice_of_v_1} and
applying \cref{linear_conc}, \cref{uniform_in_Q}, and 
\cref{bilinear_lemma}, we get that 
\begin{align}\label{eq_useful_6}
    &\limsup_{N\to \infty} \max_{Q_2 \in \Phi_{r,K}} 
    \frac{1}{\overline{\textup{d}}(S_{\delta})} 
    \cesE_{(m,n) \in [N]^2} \1_{S_{\delta}}(m,n) \cdot \\ 
    & \hspace{20mm}\big|f(Q_2) 
    f(R_2(\delta, r, K, L, N, Q_{1,r}, Q_{2} , Q_{3,K,L} ,m,n)) -1\big|
   \ll_{a,b} \nonumber \\
   & \limsup_{N\to \infty} \max_{Q_2 \in \Phi_{r,K}} 
    \frac{1}{\overline{\textup{d}}(S_{\delta})} 
    \cesE_{(m,n) \in [N]^2} \1_{S_{\delta}}(m,n) \cdot
    |f(Q_{2}) \, \overline{\chi(Q_{2})}  \, Q_{2}^{-it} \cdot \nonumber \\  
      & \hspace{20mm} P_2(m,n)^{it} \, Q_{\delta, L}^{2it}  \,
      \exp(2F_N(f,L)) -1| + \D(f, \chi \cdot n^{it}, L, \infty) + 
      L^{-1/2}. \nonumber
\end{align}
For $p>r$, $f(p)=\chi(p)\, p^{it}$, which implies that 
$f(Q_{2})= \chi(Q_{2}) \, Q_{2}^{it}$, and since $L>r_0$, we have
$F_N(f,L)=0$. Therefore, the last quantity in \eqref{eq_useful_6} 
is equal to 
\begin{equation*}
 \limsup_{N\to \infty}
    \frac{1}{\overline{\textup{d}}(S_{\delta})} 
    \cesE_{(m,n) \in [N]^2} \1_{S_{\delta}}(m,n) \cdot
    |  P_2(m,n)^{it} \, Q_{\delta, L}^{2it}  
       -1| + \oh_{L\to \infty}(1) \ll_{t} \delta + \oh_{L\to \infty}(1),
\end{equation*}
where for the last estimate we used 
\eqref{Q delta L def} and \eqref{S_delta_a=c_P_2_reduc}. We infer that 
for $f\in M_{p, \textup{fin.supp.}}$,  we have 
\begin{multline*}
    \lim_{\delta, r, K, L, N}\,
    \max_{Q_2 \in \Phi_{r,K}} 
    \frac{1}{\overline{\textup{d}}(S_{\delta})} 
    \cesE_{(m,n) \in [N]^2} \1_{S_{\delta}}(m,n) \cdot \\ 
    \big|f(Q_2) 
    f(R_2(\delta, r, K, L, N, Q_{1,r}, Q_{2} , Q_{3,K,L} ,m,n)) -1\big|=0,
\end{multline*}
and since the spectral measure of $F_3$ is supported on 
$M_{p, \textup{fin.supp.}}$, \eqref{eq_useful_4} follows. 

Using \eqref{main_term_6}, \eqref{aux_eq_4}, \eqref{eq_useful_4} we see 
that in order to prove \eqref{main_term_5}, it suffices to show that
\begin{multline}\label{main_term_7}
\suplim_{\delta,r, K, L,N}
    \int_{X}  F \cdot F_1 \cdot F_3 \cdot   \frac{1}{\overline{\textup{d}}(S_{\delta})} 
    \cesE_{(m,n) \in [N]^2} 
    \1_{S_{\delta}}(m,n) \cdot \\
    \Big( \cesE_{Q_{3}\in \Phi_{K,L,P_3}}
    T_{{Q}_3 R_3(\delta, r, K, L, N, Q_{1,r}, Q_{2,r,K} , Q_3 ,m,n)} F \Big)
    d \mu \geq \Big(\int_X F \, d \mu\Big)^4 .
\end{multline}

{\bf Step 3 (Dealing with the average over $Q_3$).}  Finally, write $F=F_5 + F_6$ where 
$F_5:=\E(F | \X_{p, \textup{fin.supp.}, P_3})$
and $F_6 \perp F_5$.
Then the main term in \eqref{main_term_7} is equal to 
\begin{multline}\label{main_term_8}
    \sum_{j\in \{5,6\}}\int_{X}  F \cdot  F_1 \cdot F_3 \cdot \frac{1}{\overline{\textup{d}}(S_{\delta})} 
    \cesE_{(m,n) \in [N]^2} 
    \1_{S_{\delta}}(m,n) \cdot   \\
    \Big( \cesE_{Q_{3}\in \Phi_{K,L,P_3}}
    T_{{Q}_3 R_3(\delta, r, K, L, N, Q_{1,r}, Q_{2,r,K} , Q_3 ,m,n)} F_j \Big)
    d \mu. 
\end{multline}
The term corresponding to $j=6$ in \eqref{main_term_6} is bounded by 
\begin{equation}\label{aux_quant_6}
\frac{1}{\overline{\textup{d}}(S_{\delta})} 
    \cesE_{(m,n) \in [N]^2}  \big \lVert 
\cesE_{Q_{3}\in \Phi_{K,L,P_3}}
    T_{{Q}_3 R_3(\delta, r, K, L, N, Q_{1,r}, Q_{2,r,K} , Q_3 ,m,n)} F_6
\big \rVert_{L^2(\mu)}.
\end{equation}
We will show that the quantity in \eqref{aux_quant_6} goes to $0$ as 
$N\to \infty, L \to \infty, K \to \infty, r \to \infty$. 
Using that $\sigma_{F_6}$ is supported on
$\M_p \setminus \M_{p, \textup{fin.supp.},P_3}$, 
it suffices to show that for every 
$f\in \M_p \setminus \M_{p, \textup{fin.supp.},P_3}$, we have 
\begin{equation*}
\lim _{r,K,L,N} \,
\cesE_{(m,n) \in [N]^2} \big | 
\cesE_{Q_{3}\in \Phi_{K,L,P_3}} f(Q_3 
R_3(\delta, r, K, L, N, Q_{1,r}, Q_{2,r,K} , Q_3 ,m,n)) \big |=0.
\end{equation*}
Let $f\in \M_p \setminus \M_{p, \textup{fin.supp.},P_3}$, 
let $\chi, t$ so that $f\sim \chi \cdot n^{it}$,
and suppose  that $r,L$ are large enough so that the conductor $q_{\chi}$ of 
$\chi$ divides $Q_{1,r}$ $\frac{Q_{\delta, L}}{Q_3}$ for all $Q_3 \in 
\Phi_{K,L,P_3}$. 
Using \cref{lemma_choice_of_v_1} \eqref{E4: lemma_choice_of_v_1} and
applying \cref{quad_conc} and \cref{uniform_in_Q}, we deduce that 
\begin{align}\label{eq_useful_7}
&\limsup_{N\to \infty} \cesE_{(m,n) \in [N]^2}  \big | 
\cesE_{Q_{3}\in \Phi_{K,L,P_3}} f({Q_3
R_3(\delta, r, K, L, N, Q_{1,r}, Q_{2,r,K} , Q_{3} ,m,n)})
\big | \ll_{P_3} \\
&\limsup_{N\to \infty} \cesE_{(m,n) \in [N]^2}  \big | 
\cesE_{Q_{3}\in \Phi_{K,L,P_3}}  f({{Q}_3})\, \overline{\chi({Q}_3)} \,
{Q}_3 ^{-it} \,Q_{\delta, L}^{2it} \, P_3(m,n)^{it}\,
     \exp(G_{P_3,N}(f,L)) \big |   \nonumber \\
& \hspace{20mm} + \D(f, \chi \cdot n^{it}; L, \infty)
+L^{-1/2}
= \nonumber \\
& \limsup_{N\to \infty} |\exp(G_{P_3,N}(f,L))|\cdot 
\big| \cesE_{Q_3 \in \Phi_{K,L,P_3}} f(Q_3)\, \overline{\chi(Q_3)} \,Q_3 ^{-it}
\big|+ \D(f, \chi \cdot n^{it}; L, \infty)
+L^{-1/2}. \nonumber
\end{align}
Since $f\sim \chi \cdot n^{it}$, we have   
$\lim_{L\to \infty} \lim_{N\to \infty} |\exp(G_{P_3,N}(f,L))|=1$. Moreover, 
since $f \in \M_p \setminus \M_{p, \textup{fin.supp.},P_3}$, for each 
$K$ there is 
$p>K$ such that $\omega_{P_3}(p)=2$ and $f(p) \neq \chi(p)\,  p^{it}$. Using 
that $\Phi_{K,L,P_3}$ is asymptotically
invariant under dilation by $p$ as $L\to \infty$ we have 
\begin{align*}
    f(p)\, \overline{\chi(p)}\, p^{-it} \, 
    \cesE_{Q_{3} \in \Phi_{K,L,P_3}} f(Q_{3})\,
    \overline{\chi(Q_{3})}\,Q_{3} ^{-it}
    &=
    \cesE_{Q_{3} \in p \Phi_{K,L,P_3}} f(Q_{3})\, \overline{\chi(Q_{3})}\,
    Q_{3} ^{-it}
    \\ &=
    \cesE_{Q_{3} \in  \Phi_{K,L,P_3}} f(Q_{3})\, \overline{\chi(Q_{3})}\,
    Q_{3} ^{-it}
   + \oh_{L\to \infty}(1).
\end{align*}
Using $f(p) \neq \chi(p)\, p^{it}$ for some $p>K$, the above implies that 
\begin{equation}\label{eq_aux_7}
   \lim_{L\to \infty} \cesE_{Q_{3} \in  \Phi_{K,L,P_3}} f(Q_{3}) \,
   \overline{\chi(Q_{3})}\, Q_{3} ^{-it} = 0.
\end{equation}
Combining this with \eqref{eq_useful_7} we infer that 
\begin{equation}\label{aux_eq_5}
    \lim_{r,K,L,N}\, 
    \cesE_{(m,n) \in [N]^2}  \big \lVert 
\cesE_{Q_{3}\in \Phi_{K,L,P_3}} 
T_{Q_3 R_3(\delta, r, K, L, N, Q_{1,r}, Q_{2,r,K} , Q_{3} ,m,n)} F_6
\big \rVert_{L^2(\mu)}=0.
\end{equation}
On the other hand, for the term corresponding to $j=5$ in 
\eqref{main_term_8}, we claim that 
\begin{multline}\label{eq_useful_8}
    \lim_{\delta, r, K, L, N}\, 
    \max_{Q_3 \in \Phi_{K,L,P_3}} 
    \frac{1}{\overline{\textup{d}}(S_{\delta})}\cdot  
    \cesE_{(m,n) \in [N]^2} \1_{S_{\delta}}(m,n)\\ 
    \lVert 
    T_{Q_3 R_3(\delta, r, K, L, N, Q_{1,r}, Q_{2,r,K} , Q_{3} ,m,n)} F_5
    -F_5 \rVert_{L^2(\mu)} =0.
\end{multline}

Let $f\in M_{p, \textup{fin.supp.},P_3}$, and
let $\chi, t$ and $r_0 \in \N$ so that $f\sim\chi \cdot n^{it}$ and
$f(p)=\chi(p)\, p^{it}$ for all primes $p>r_0$ with $\omega_{P_3}(p)>0$. 
Also, suppose that $r,L$ are sufficiently
large so that $r>r_0$ and the conductor $q_{\chi}$ of $\chi$ 
divides $Q_{1,r}$ and $\frac{Q_{\delta, L}}{Q_3}$  for every
$Q_{3} \in \Phi_{K,L,P_3}$.
Using \cref{lemma_choice_of_v_1} \eqref{E4: lemma_choice_of_v_1} and
applying \cref{quad_conc} and \cref{uniform_in_Q}, we deduce that 
\begin{align}\label{eq_useful_9}
    &\limsup_{N\to \infty} \max_{Q_3 \in \Phi_{K,L,P_3}} 
    \frac{1}{\overline{\textup{d}}(S_{\delta})} 
    \cesE_{(m,n) \in [N]^2} \1_{S_{\delta}}(m,n) \cdot \\ 
    & \hspace{20mm}\big|f(Q_3) \,
    f(R_3(\delta, r, K, L, N, Q_{1,r}, Q_{2,r,K} , Q_{3} ,m,n)) -1\big|
   \ll_{P_3} \nonumber \\
   & \limsup_{N\to \infty} \max_{Q_3 \in \Phi_{K,L,P_3}} 
    \frac{1}{\overline{\textup{d}}(S_{\delta})} 
    \cesE_{(m,n) \in [N]^2} \1_{S_{\delta}}(m,n) \cdot
    |f(Q_{3})\, \overline{\chi(Q_{3})} \, Q_{3}^{-it} \cdot \nonumber \\  
      & \hspace{20mm} P_3(m,n)^{it} \, Q_{\delta, L}^{2it}  \,
      \exp(G_{P_3,N}(f,L)) -1| + \D(f, \chi \cdot n^{it}, L, \infty) + 
      L^{-1/2}. \nonumber
\end{align}
For $p>r$ with $\omega_{P_3}(p)>0$, we have 
$f(p)=\chi(p)\,p^{it}$, which implies that 
$f(Q_{3})= \chi(Q_{3}) \,Q_{3}^{it}$, and since $L>r_0$, we have
$G_{P_3,N}(f,L)=0$. Therefore, the last quantity in \eqref{eq_useful_7} 
is equal to 
\begin{equation*}
 \limsup_{N\to \infty}
    \frac{1}{\overline{\textup{d}}(S_{\delta})} 
    \cesE_{(m,n) \in [N]^2} \1_{S_{\delta}}(m,n) \cdot
    |  P_3(m,n)^{it} \, Q_{\delta, L}^{2it}  
       -1| + \oh_{L\to \infty}(1) \ll_{t} \delta + \oh_{L\to \infty}(1),
\end{equation*}
where for the last estimate we used 
\eqref{Q delta L def} and \eqref{S_delta_a=c_P_2_reduc}. 
From the above we obtain that 
for $f\in M_{p, \textup{fin.supp.},P_3}$,  we have
\begin{multline*}
    \suplim_{\delta, r, K, L, N}
    \max_{Q_3 \in \Phi_{K,L,P_3}} 
    \frac{1}{\overline{\textup{d}}(S_{\delta})} 
    \cesE_{(m,n) \in [N]^2} \1_{S_{\delta}}(m,n) \cdot \\ 
    \big|f(Q_3) 
    f(R_3(\delta, r, K, L, N, Q_{1,r}, Q_{2,r,K} , Q_{3} ,m,n)) -1\big|=0,
\end{multline*}
and since the spectral measure of $F_3$ is supported on 
$M_{p, \textup{fin.supp.}}$, \eqref{eq_useful_8} follows. 

\smallskip
 
{\bf Step 4 (Finishing the argument).}  Using \eqref{main_term_8}, \eqref{aux_eq_5}, \eqref{eq_useful_8}, we see 
that in order to prove \eqref{main_term_7} it suffices to show that
\begin{equation}\label{main_term_10}
    \int_{X}  F \cdot  F_1 \cdot F_3  \cdot F_5\, d\mu \cdot  \suplim_{\delta,r, K, L,N}\, \frac{1}{\overline{\textup{d}}(S_{\delta})} 
    \cesE_{(m,n) \in [N]^2} 
    \1_{S_{\delta}}(m,n) \geq \Big(\int_X F \, d \mu\Big)^4 .
\end{equation}
Since
$$
    \int_X F\cdot F_1 \cdot F_3 \cdot F_5 \, d \mu  =
    \int_X F\cdot \E(F | {\X}_{\A})  \cdot 
    \E(F | \X_{p, \textup{fin.supp.}})
\cdot  \E(F | \X_{p, \textup{fin.supp.},P_3}) \, d \mu, 
$$
 it follows from \cref{chu-lemma}  that the last quantity is greater or equal to $(\int F\, d\mu)^4$. This establishes \eqref{main_term_10} 
and  concludes the proof of the proposition. 
\end{proof}

\subsection{$P_2$ irreducible}\label{Sec : a=c_P_2_irreducible}
Here we handle  the case where $P_2$ is irreducible.  
A representative example is the equation $x^2+2y^2=z^2$, in which case  
\eqref{quadratic_forms_a=c} gives 
$$
P_1(m,n)=2mn,   \quad P_2(m,n) = m^2-2n^2, \quad  
P_3(m,n)=m^2+2n^2.
$$
The proof of this case is very similar to the case  
when $P_2$ is reducible, so we only point out the  
necessary changes.

Let $\kappa \in \N$ so that $P_2(1, \kappa)= 1- ab \kappa^2  \neq 0$, and 
from now on, suppose that $r$ is large enough so that
for $p>r$ we have $\omega_{P_2}(p), \omega_{P_3}(p)\in \{0,2\}$ and 
\begin{equation}\label{E2: p not dividing}
p\nmid 2\kappa ab(1+ab)(1- ab \kappa ^2)(1+ ab \kappa ^2).    
\end{equation}

To prove the result, given $Q_1 \in \Phi_r$, $Q_2 \in \Phi_{r,K,P_2}$, 
$Q_3 \in \Phi_{K,L, P_3}$,
we will choose an integer $v$ depending on $\delta, r,K,L,Q_1,Q_2, Q_3$, and 
we will average over the grid 
$(Q_{\delta,L} m+1, Q_{\delta,L} n +v)$ in \eqref{T: main a=c eq}.

\begin{lemma}\label{lemma_choice_of_v_1_a=c_P_2_irred}
Let $\delta>0$, $r,K,L\in \N$ with $r<K<L/2$, $P_1,P_2,P_3$ be as in \eqref{quadratic_forms_a=c}, and let $Q_1 \in \Phi_r$, 
$Q_2 \in \Phi_{r,K,P_2}$, $Q_3 \in \Phi_{K,L,P_3}$. Then there is an integer
$v$ with $0\leq v \leq Q_{\delta, L}-1$ such that 
\begin{enumerate}[(i)]
    \item \label{E1: lemma_choice_of_v_1_a=c_P_2_irred} $(Q_{\delta, L},v)=Q_1$, 
    $\frac{v}{Q_1}\equiv 1 \pmod {Q_1}$,
    \item \label{E2: lemma_choice_of_v_1_a=c_P_2_irred} 
    $(Q_{\delta, L},P_2(1,v))= Q_2$, 
    $\frac{P_2(1,v)}{Q_2} \equiv Q_2 ^{-1} 
    \pmod{{Q_1}}$,
    \item \label{E3: lemma_choice_of_v_1_a=c_P_2_irred} 
    $(Q_{\delta, L},P_3(1,v))=Q_3$, 
    $\frac{P_3(1,v)}{Q_3} \equiv Q_3^{-1} \pmod{{Q_1}}$.
\end{enumerate}
\end{lemma}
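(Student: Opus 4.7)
The plan is to mimic the proof of Lemma~\ref{lemma_choice_of_v_1}, replacing the splitting $P_2 = (m-\gamma n)(m+\gamma n)$ (which is no longer available) by a direct application of Hensel's lemma to each of the irreducible quadratics $P_2(1,x) = 1 - abx^2$ and $P_3(1,x) = 1 + abx^2$. Since by \eqref{E2: p not dividing} the discriminant $4ab$ of each of these polynomials is a unit modulo every prime $p > r$, for such $p$ with $\omega_{P_2}(p) = 2$ Hensel's lemma lifts a root $z_1$ of $P_2(1,x) \pmod p$ to a unique root $\zeta_2 = \zeta_2(p, Q_2)$ of $P_2(1,x) - p^{\theta_p(Q_2)} \pmod{p^{\theta_p(Q_2)+1}}$, and similarly one obtains a lift $\zeta_3 = \zeta_3(p, Q_3)$ from a root of $P_3(1,x) \pmod p$ at primes with $\omega_{P_3}(p) = 2$.

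Using the Chinese Remainder Theorem, I then choose $v$ with $0 \leq v \leq Q_{\delta, L} - 1$ satisfying $v \equiv Q_1 \pmod{p^{2\theta_p(Q_1)}}$ for $p \leq r$, $v \equiv \kappa \pmod{p}$ for $r < p \leq K$ with $\omega_{P_2}(p) = 0$ and for $K < p \leq L$ with $\omega_{P_3}(p) = 0$, $v \equiv \zeta_2(p, Q_2) \pmod{p^{\theta_p(Q_2)+1}}$ for $r < p \leq K$ with $\omega_{P_2}(p) = 2$, and $v \equiv \zeta_3(p, Q_3) \pmod{p^{\theta_p(Q_3)+1}}$ for $K < p \leq L$ with $\omega_{P_3}(p) = 2$. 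The product of the moduli is bounded by $\prod_{p \leq K} p^{4K} \prod_{K < p \leq L} p^{1 + 3L/2} \leq Q_{\delta, L}$, so such $v$ exists in the prescribed range, exactly as in Lemma~\ref{lemma_choice_of_v_1}.

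Verification of \eqref{E1: lemma_choice_of_v_1_a=c_P_2_irred}--\eqref{E3: lemma_choice_of_v_1_a=c_P_2_irred} proceeds in the same style as in Lemma~\ref{lemma_choice_of_v_1}. The first set of congruences gives $p^{\theta_p(Q_1)} \parallel v$ for $p \leq r$, while the $\kappa$ and Hensel congruences ensure $(p, v) = 1$ at all larger primes (for the Hensel ones, if $p \mid v$ then $p \mid z_1$, which forces $P_j(1, z_1) \equiv 1 \pmod p$, contradicting $P_j(1, z_1) \equiv 0 \pmod p$). For \eqref{E2: lemma_choice_of_v_1_a=c_P_2_irred}, the Hensel congruence gives $p^{\theta_p(Q_2)} \parallel P_2(1, v)$ on the relevant range, and $p \nmid P_2(1, v)$ on the remaining ranges follows either from the $\kappa$-condition together with \eqref{E2: p not dividing}, or, at primes with $\omega_{P_3}(p) = 2$, from the identity $P_2(1, x) + P_3(1, x) = 2$ combined with $p \nmid 2$. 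The analogous argument establishes \eqref{E3: lemma_choice_of_v_1_a=c_P_2_irred}. Finally, the divisibility congruences $\frac{P_j(1, v)}{Q_j} \equiv Q_j^{-1} \pmod{Q_1}$ for $j = 2, 3$ follow from $v \equiv 0 \pmod{Q_1}$, which forces $P_j(1, v) \equiv 1 \pmod{Q_1}$.

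The main (minor) subtlety compared with the reducible case is that one must apply Hensel's lemma separately to $P_2$ and $P_3$ and then verify the pairwise coprimality of $v$, $P_2(1,v)$, and $P_3(1,v)$ across prime ranges where the roles of $\omega_{P_2}$ and $\omega_{P_3}$ differ; the key algebraic input making this work is simply $P_2(1,x) + P_3(1,x) = 2$, which prevents the prime factorizations of $P_2(1,v)$ and $P_3(1,v)$ from overlapping beyond the prescribed factors $Q_2$ and $Q_3$.
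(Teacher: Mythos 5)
Your proposal is correct and follows essentially the same route as the paper: Hensel lifting of the simple roots of $P_2(1,x)\bmod p$ and $P_3(1,x)\bmod p$ to roots of $P_j(1,x)-p^{\theta_p(Q_j)}$, the identical CRT system ($v\equiv Q_1 \pmod{p^{2\theta_p(Q_1)}}$ for $p\le r$, $v\equiv\kappa$ at primes with $\omega_{P_j}(p)=0$, $v\equiv\zeta_j$ at primes with $\omega_{P_j}(p)=2$), and the same verification via $P_2(1,x)+P_3(1,x)=2$ and \eqref{E2: p not dividing}. The paper omits the verification details as "very similar to Lemma~\ref{lemma_choice_of_v_1}," and your filled-in checks (exact divisibility from the Hensel congruences, coprimality across ranges, and $P_j(1,v)\equiv 1\pmod{Q_1}$ from $Q_1\mid v$) are the intended ones.
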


\begin{proof}
Let $p\in \P$ with $r<p \leq K$ and $\omega_{P_2}(p)=2$, and consider the 
polynomial $g_2(x):=P_2(1,x)-p^{\theta_p(Q_2)}$. 
Then $\textup{mod}\: p$ the polynomial
becomes $g_2(x)=P_2(1,x)$, and it has two simple roots
$z_2, z_2'$ with $z_2 \not \equiv z_2' \pmod p$. Using  
Hensel's lemma, we may lift $z_2$ to a unique root 
$\zeta_2 = \zeta_2 (p, Q_2)$ of $g_2 \mod {p^{\theta_p(Q_2)+1}}$.

Analogously, for $p\in \P$ with $K<p \leq L$ and $\omega_{P_3}(p)=2$, 
we may find $\zeta_3 = \zeta_3 (p, Q_3)$ so that if 
$g_3(x):=P_3(1,x)-p^{\theta_p(Q_3)}$, then $g_3(\zeta_3)\equiv 0 
\pmod {p^{\theta_p(Q_3)+1}}$.

Using the Chinese Remainder theorem we may then choose $v$ with
$0\leq v \leq Q_{\delta,L}-1$
such that 
\begin{itemize}
     \item $v\equiv Q_1 \pmod {p^{2 \theta_p(Q_1)}}$ for 
    $p\leq r$,
    \item $v\equiv \kappa$
    $\pmod {p}$ for $r< p\leq K$ with $\omega_{P_2}(p)=0$,
    \item $v\equiv \zeta_2(p, Q_2)$
    $\pmod {p^{\theta_p(Q_2)+1}}$ for $r< p\leq K$ with 
    $\omega_{P_2}(p)=2$,
    \item $v\equiv \kappa \pmod {p}  $ for $K<p\leq L$ with $\omega_{P_3}(p)=0$,
    \item $v\equiv \zeta_3(p,Q_3) \pmod {p^{\theta_p(Q_3)+1}}$ for 
    $K<p\leq L$ with $\omega_{P_3}(p)=2$.
\end{itemize}
The rest of the proof is very similar to the proof of 
\cref{lemma_choice_of_v_1}, so it is omitted.
\end{proof}

As in \cref{Sec : a=c_P_2_reducible}, for each $\delta\in(0,1/2)$, let 
\begin{equation}\label{S_delta_a=c_P_2_irreduc}
    S_{\delta}:=\{(m,n)\in \N^2 : m> 2 \sqrt{ab}n, \: 
    |(P_j(m,n))^i -1|\leq \delta \, \text{ for }
    j=1,2,3\}.
\end{equation}
and recall that in view of \cref{Cor S_d dense}, $S_{\delta}$ has 
positive upper density. When $P_2$ is irreducible, \cref{T: main a=c} follows from the following 
result.

\begin{proposition}\label{P: main a=c P_2 irreducible}
Let $(X,\CX,\mu,T_n)$  be a pretentious multiplicative action, let 
$F\in L^{\infty}(\mu)$ with $F\geq 0$, 
let $P_1,P_2,P_3$ be as in \eqref{quadratic_forms_a=c},  and suppose that $P_2$ is irreducible. 
Also, let $S_{\delta}$ be as in \eqref{S_delta_a=c_P_2_irreduc} and $v=v(\delta, r,K,L,Q_1,Q_2,Q_3)$ be  as in 
\cref{lemma_choice_of_v_1_a=c_P_2_irred}.
Then 
    \begin{multline}\label{E: P: main a=c P_2 irreducible}
    \suplim_{\delta, r,K,L,N}\:
    \cesE_{Q_{1}\in \Phi_{r}} \cesE_{Q_{2}\in \Phi_{r,K,P_2}}
    \cesE_{Q_{3}\in \Phi_{K,L,P_3}}
    \frac{1}{\overline{\textup{d}}(S_{\delta})} \cesE_{(m,n) \in [N]^2} 
    \1_{S_{\delta}}(m,n)\cdot  \\
    \int_{X} F\cdot \prod_{j=1} ^3
    T_{P_j(Q_{\delta,L} m+1, Q_{\delta,L} n +v)} F 
   \,  d \mu \geq  \Big(\int_X F \, d \mu\Big)^4.
    \end{multline}
\end{proposition}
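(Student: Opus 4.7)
The proof follows the same overall strategy as the proof of Proposition \ref{P: main a=c P_2 reducible}, with appropriate modifications to account for the irreducibility of $P_2$. The main change concerns the step controlling the average over $Q_2$: since $P_2$ no longer splits into linear factors, one averages $Q_2$ over $\Phi_{r,K,P_2}$ rather than $\Phi_{r,K}$, applies Proposition \ref{quad_conc} to $P_2$ in place of Proposition \ref{linear_conc}, and uses the factor $\X_{p,\textup{fin.supp.},P_2}$ in place of $\X_{p,\textup{fin.supp.}}$.

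First I would set up the auxiliary quantities $R_j(\delta,r,K,L,N,Q_1,Q_2,Q_3,m,n)$ defined so that $P_j(Q_{\delta,L}m+1,Q_{\delta,L}n+v)=Q_j\cdot R_j(m,n)$ for $j=1,2,3$, which is possible by Lemma \ref{lemma_choice_of_v_1_a=c_P_2_irred}. I would then prove an analog of Lemma \ref{L: main a=c P_2 reducible}, showing that after multiplicative averaging over $(Q_1,Q_2,Q_3)\in \Phi_r\times\Phi_{r,K,P_2}\times\Phi_{K,L,P_3}$, the term $T_{Q_jR_j(m,n)}F$ depends asymptotically only on $Q_j$. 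The arguments for $j=1$ (via Proposition \ref{linear_conc}) and $j=3$ (via Proposition \ref{quad_conc} applied to $P_3$) are identical to those already carried out. For $j=2$, I would apply Proposition \ref{quad_conc} to the form $P_2$, using the congruence supplied by Lemma \ref{lemma_choice_of_v_1_a=c_P_2_irred}\,\eqref{E2: lemma_choice_of_v_1_a=c_P_2_irred}, which is exactly the hypothesis required to invoke the quadratic concentration estimate with $c=Q_2$.

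Next I would carry out the three-step decomposition. The $Q_1$-average is handled by splitting $F=F_1+F_2$ with $F_1:=\E(F\mid \X_{\A})$, exactly as in Step~1 of the reducible case. The $Q_3$-average is handled by splitting $F=F_5+F_6$ with $F_5:=\E(F\mid \X_{p,\textup{fin.supp.},P_3})$, exactly as in Step~3. The $Q_2$-average requires splitting $F=F_3+F_4$ with $F_3:=\E(F\mid \X_{p,\textup{fin.supp.},P_2})$. In the ``error'' direction, using Proposition \ref{quad_conc} and the fact that $\Phi_{r,K,P_2}$ is asymptotically invariant under dilation by any prime $p>r$ with $\omega_{P_2}(p)>0$, one argues (in direct analogy with \eqref{eq_useful_3}--\eqref{eq_aux_5}) that the $Q_2$-average of $T_{Q_2R_2(m,n)}F_4$ vanishes in the appropriate limit, since for $f\in \M_p\setminus \M_{p,\textup{fin.supp.},P_2}$ there exists $p>r$ with $\omega_{P_2}(p)>0$ and $f(p)\neq \chi(p)p^{it}$, which forces $\cesE_{Q_2\in\Phi_{r,K,P_2}}f(Q_2)\overline{\chi(Q_2)}Q_2^{-it}\to 0$ as $K\to\infty$. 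In the ``concentration'' direction, Remark \ref{useful_rmk_1} gives that for $f\in \M_{p,\textup{fin.supp.},P_2}$ with $f\sim\chi\cdot n^{it}$, one has $f(p)=\chi(p)p^{it}$ for all but finitely many primes $p$ with $\omega_{P_2}(p)>0$, so the oscillating factor $\exp(G_{P_2,N}(f,L))$ equals $1$ for $L$ sufficiently large, and the residual $P_2(m,n)^{it}\cdot Q_{\delta,L}^{2it}$ is controlled by the definition \eqref{S_delta_a=c_P_2_irreduc} of $S_\delta$ together with \eqref{Q delta L def}.

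Finally, combining the three steps and applying Proposition \ref{chu-lemma} with the three sub-$\sigma$-algebras $\X_{\A}$, $\X_{p,\textup{fin.supp.},P_2}$, and $\X_{p,\textup{fin.supp.},P_3}$ yields
$$
\int_X F\cdot \E(F\mid \X_{\A})\cdot \E(F\mid \X_{p,\textup{fin.supp.},P_2})\cdot \E(F\mid \X_{p,\textup{fin.supp.},P_3})\,d\mu\geq \Big(\int_X F\,d\mu\Big)^4,
$$
together with the positive upper density of $S_\delta$ (Corollary \ref{Cor S_d dense}), which establishes \eqref{E: P: main a=c P_2 irreducible}. The main technical point is the concentration step for $j=2$: one must verify carefully that the conditions on $v$ from Lemma \ref{lemma_choice_of_v_1_a=c_P_2_irred}\,\eqref{E2: lemma_choice_of_v_1_a=c_P_2_irred} give $c=Q_2$ in the statement of Proposition \ref{quad_conc} and that the residual phase $P_2(m,n)^{it}Q_{\delta,L}^{2it}$ can be made small by restricting to $S_\delta$; beyond this bookkeeping, the argument is a direct transcription of the reducible case.
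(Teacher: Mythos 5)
Your proposal is correct and follows essentially the same route as the paper, which itself proves this case by repeating the argument of Proposition~\ref{P: main a=c P_2 reducible} with the only essential change being the use of \cref{quad_conc} (and the factor $\X_{p,\textup{fin.supp.},P_2}$ with averaging over $\Phi_{r,K,P_2}$) for the $P_2$-term, exactly as you describe. Your identification of $c=Q_2$ in \cref{quad_conc} via \cref{lemma_choice_of_v_1_a=c_P_2_irred} and the final application of \cref{chu-lemma} with $\X_{\A}$, $\X_{p,\textup{fin.supp.},P_2}$, $\X_{p,\textup{fin.supp.},P_3}$ match the paper's intended argument.
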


The proof that \cref{P: main a=c P_2 irreducible} implies 
\cref{T: main a=c} when $P_2$ is irreducible is identical to the case
when $P_2$ is reducible, so it is omitted.

To prove \cref{P: main a=c P_2 irreducible}, one first has to prove an 
analog of \cref{L: main a=c P_2 reducible}, which asserts that in the 
integral in \eqref{E: P: main a=c P_2 irreducible},
the first term is independent of $Q_2, Q_3$, the second term 
is independent of $Q_1, Q_3$ and the third term 
is independent of $Q_1, Q_2$, and then using that prove 
\cref{P: main a=c P_2 irreducible}. Both proofs are very similar to the 
case when $P_2$ is reducible, and the only essential difference is that
in order to handle the term 
$T_{P_2(Q_{\delta,L} m+1, Q_{\delta,L} n +v)} F$, one has to use 
\cref{quad_conc} instead of \cref{linear_conc}. We omit the details.

\section{Proof of the main theorem when  $a+b=c$}\label{Sec a+b=c}
In this section we prove \cref{T:Main} when  $a+b=c$. 
 One easily verifies  that 
\begin{equation}\label{eq_param_a+b=c_2}
    x=k (m^2 -2bmn - ab n^2),\:\: y=k (m^2 +2amn - ab n^2), \:\:
    z=k ( m^2 + ab n^2),\:\:\: k, m,n\in \N,
\end{equation}
are solutions of $a x^2 + b y^2=c z^2$ when $a+b=c$. Let
\begin{equation}\label{quadratic_forms_a+b=c}
P_1(m,n):=m^2 + ab n^2, \:\:\:
P_2(m,n):=m^2 -2bmn - ab n^2, \:\:\:
P_3(m,n):= m^2 +2amn - ab n^2.
\end{equation} 
It is easy to see that, since $a, b \neq 0$,  
the polynomials $P_1, P_2, P_3$ are pairwise independent.  
This property will  be implicitly used in order to  
solve certain simultaneous congruences. Let 
\begin{equation}\label{gamma_defn}
    \gamma:=\max\{4b, \sqrt{2ab}\}>0.
\end{equation}
For all $(m,n)\in \N^2$, $P_1(m,n)>0$, and if 
$m> \gamma n$, then we have that also $P_2(m,n)$, $P_3(m,n)>0$.

The case $a+b=c$ of \cref{T:Main} follows from the following result.
\begin{theorem}\label{T: main a+b=c}
Let $(X,\CX,\mu,T_n)$  be a pretentious multiplicative action, let  
$F\in L^{\infty}(\mu)$ with $F\geq 0$, 
and let $P_1,P_2,P_3$, $\gamma$ be as in 
\eqref{quadratic_forms_a+b=c}, \eqref{gamma_defn}. 
Then for every $\e>0$ we have
\begin{equation}\label{T: main a+b=c eq}
    \int_X F \cdot T_{P_1(m,n)} F \cdot T_{P_2(m,n)} F \cdot T_{P_3(m,n)} F \, d\mu
    > \Big(\int_X F \, d \mu \Big)^4 -\e
\end{equation}
for a set of $(m,n)\in \N^2$, $m>\gamma n$, with positive upper 
density.
\end{theorem}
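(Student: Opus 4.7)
The strategy follows the template of Section~\ref{Sec a=c or b=c} almost line for line, but with a meaningful simplification rooted in the fact that all three forms in \eqref{quadratic_forms_a+b=c} have leading coefficient $1$ in $m$ and satisfy $P_j(1,0)=1$. Consequently, every term $T_{P_j(Q_{\delta,L}m+1,\,Q_{\delta,L}n+v)}F$ can be made to concentrate around $F$ itself rather than around some $T_{an}F$ (as happened in the $a=c$ case because of $P_1=2amn$), so the Archimedean factor $\X_{\A}$ plays no role. Instead, all three terms will be replaced by conditional expectations onto factors of the form $\X_{p,\textup{fin.supp.},P_j}$, or $\X_{p,\textup{fin.supp.}}$ whenever $P_j$ is reducible (which occurs precisely when $bc$ or $ac$ is a perfect square, since the discriminants of $P_2,P_3$ are $4bc$ and $4ac$ respectively, while $P_1=m^2+abn^2$ is always irreducible).

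First I introduce
$$S_\delta := \{(m,n)\in\N^2 : m>\gamma n,\ |P_j(m,n)^i-1|\leq \delta,\ j=1,2,3\},$$
which has positive upper density by \cref{Cor S_d dense} applied via case \eqref{I:pos1} of \cref{L:positive1}, since the triple listed there is precisely $(P_2,P_3,P_1)$. Next, for $r\in\N$ large enough to exclude the finitely many primes dividing discriminants and coefficients of $P_1,P_2,P_3$ (an analogue of \eqref{E1: p not dividing} and \eqref{E2: p not dividing}), I pick $Q_j$ from three pairwise disjoint prime ranges, using \Folner{} sets $\Phi_{\cdot,\cdot,P_j}$ restricted to primes where $\omega_{P_j}>0$. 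By the Chinese Remainder Theorem combined with Hensel lifting, as in \cref{lemma_choice_of_v_1} and \cref{lemma_choice_of_v_1_a=c_P_2_irred}, I then select $v=v(\delta,r,K,L,Q_1,Q_2,Q_3)$ with $0\leq v\leq Q_{\delta,L}-1$ so that for each $j\in\{1,2,3\}$,
$$(Q_{\delta,L},P_j(1,v))=Q_j,\qquad \frac{P_j(1,v)}{Q_j}\equiv Q_j^{-1}\!\!\!\pmod{Q_1 q_\chi},\qquad P_j(1,v)\equiv 1\!\!\!\pmod{Q_1}.$$
Writing $P_j(Q_{\delta,L}m+1,Q_{\delta,L}n+v)=Q_j R_j(m,n)$ then absorbs the prescribed divisibilities into the $Q_j$ factor.

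Following the three-step scheme of \cref{P: main a=c P_2 reducible}, I first establish the analogue of \cref{L: main a=c P_2 reducible}: the $j$-th factor in the integrand is asymptotically independent of $Q_k$ for $k\neq j$, using \cref{quad_conc} when $P_j$ is irreducible and \cref{linear_conc} applied to the two linear factors when $P_j$ is reducible, together with \cref{uniform_in_Q} and \cref{bilinear_lemma}. Then for each $j=1,2,3$ I split $F=\E(F\mid \X_j)+F_{\perp,j}$ with $\X_j\in\{\X_{p,\textup{fin.supp.},P_j},\,\X_{p,\textup{fin.supp.}}\}$ chosen according to reducibility of $P_j$. The orthogonal piece is annihilated by the $Q_j$-average in $L^2(\mu)$: for $f$ outside $\M_{p,\textup{fin.supp.},P_j}$ there is a prime $p$ in the $Q_j$-range with $\omega_{P_j}(p)>0$ and $f(p)\neq \chi(p)p^{it}$, and dilation-invariance of the \Folner{} set forces cancellation exactly as in the treatment of $F_6$ in Step~3 of \cref{P: main a=c P_2 reducible}. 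The projection $\E(F\mid\X_j)$ concentrates at itself: for $f\in \M_{p,\textup{fin.supp.},P_j}$ the oscillating factor $\exp(G_{P_j,N}(f,L))$ equals $1$ for $L$ large, while $\chi(P_j(1,v))=\chi(1)=1$ by the congruence on $v$, and the Archimedean part is controlled by membership in $S_\delta$ together with $|Q_{\delta,L}^{2it}-1|\leq \delta$.

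Combining the three steps, the main average is bounded below by
$$\int_X F\cdot \E(F\mid\X_1)\cdot \E(F\mid\X_2)\cdot \E(F\mid\X_3)\,d\mu \geq \Big(\int_X F\,d\mu\Big)^4,$$
the final inequality being \cref{chu-lemma}. The main technical obstacle will be the simultaneous CRT/Hensel construction of $v$ when two of the $P_j$ may be reducible at once: for each prime $p$ in the $Q_j$-range with $\omega_{P_j}(p)>0$ one must lift a root of $P_j(1,\cdot)\bmod p^{\theta_p(Q_j)+1}$ in such a way that neither $P_k(1,v)$ for $k\neq j$ nor the auxiliary linear expressions used for the coprimality conditions becomes divisible by $p$. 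This reduces to excluding a suitable finite list of primes dividing explicit expressions in $a,b$ and in the pairwise resultants of $P_1,P_2,P_3$, analogously to the conditions \eqref{E1: p not dividing} and \eqref{E2: p not dividing} in the $a=c$ case, but now with three genuine quadratic forms rather than one quadratic and one bilinear.
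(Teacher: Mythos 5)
Your overall strategy coincides with the paper's: in the $a+b=c$ case all three forms satisfy $P_j(1,0)=1$, so the Archimedean factor $\X_{\A}$ is not needed; each term is projected onto $\X_{p,\textup{fin.supp.},P_j}$ (or onto $\X_{p,\textup{fin.supp.}}$ when $P_j$ is reducible, i.e.\ when $bc$ resp.\ $ac$ is a perfect square), the orthogonal components are annihilated by dilation invariance of the \Folner{} averages, and the lower bound comes from \cref{chu-lemma}. This is exactly how \cref{Sec a+b=c} proceeds, split according to the reducibility of $P_2,P_3$, with \cref{linear_conc} and \cref{bilinear_lemma} replacing \cref{quad_conc} for the reducible forms, as you indicate.

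There is, however, a genuine flaw in your construction of $v$. You require, for every $j\in\{1,2,3\}$, that $(Q_{\delta,L},P_j(1,v))=Q_j$ together with $\frac{P_j(1,v)}{Q_j}\equiv Q_j^{-1}\pmod{Q_1 q_\chi}$ and $P_j(1,v)\equiv 1\pmod{Q_1}$. For $j=1$ this system is inconsistent: $Q_1\mid P_1(1,v)$ is incompatible with $P_1(1,v)\equiv 1\pmod{Q_1}$, and $Q_1^{-1}$ does not exist modulo $Q_1q_\chi$. Moreover, $v$ must be fixed before the spectral decomposition is invoked, i.e.\ independently of $f$ and hence of the conductor $q_\chi$, so the modulus cannot involve $q_\chi$ at all. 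The idea you are missing is that in the $a+b=c$ case $Q_1$ can no longer serve as the anchor modulus for the inverse congruences, because it is itself one of the factors being extracted (from $P_1(1,v)$); this differs from the $a=c$ case, where $Q_1\in\Phi_r$ contained every prime up to $r$ and therefore eventually every conductor. The paper resolves this by introducing a fourth scale $s<r<K<L$, drawing $Q_1$ from $\Phi_{s,r,P_1}$, and imposing all congruences $\frac{P_j(1,v)}{Q_j}\equiv Q_j^{-1}$ modulo the fixed highly divisible integer $Q_s=\prod_{p\le s}p^{2s}$ of \eqref{Q_l_def}, which is coprime to $Q_1Q_2Q_3$ and is eventually divisible by any given conductor as $s\to\infty$; see \cref{a+b=c_new_lemma_choice_of_v_1} and its variants. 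With that correction (and the corresponding extra limit in $s$) your three-step argument goes through. A smaller point: your $S_\delta$ should require $m>2\gamma n$ rather than $m>\gamma n$, so that $P_2,P_3$ remain positive at the shifted points $(Q_{\delta,L}m+1,Q_{\delta,L}n+v)$ with $0\le v\le Q_{\delta,L}-1$, which is what guarantees that the final pairs satisfy the constraint $m>\gamma n$ in the statement.
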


The polynomial  $P_1$ is  clearly irreducible. We split the proof of \cref{T: main a+b=c}
into cases according to whether 
$P_2$, $P_3$ are irreducible or not. 

Throughout \cref{Sec a+b=c}, $\Phi_{r,K}$ and $\Phi_{r,K,P}$ are  
as in \eqref{eq_foln_def_1} and \eqref{eq_foln_def_2} (in some instances, $s,r$  
take the place of $r,K$),  
while $Q_s$ and $Q_{\delta,L}$ are as in \eqref{Q_l_def}  
and \eqref{Q_delta_l_def}.
Furthermore, 
$\omega_P$ is as in \eqref{omega_def}, and $\D$, $F_N(f,L)$, $G_{P,N}(f,L)$
are as in \eqref{E: definition of pretentious distance}, 
\eqref{def_F_N}, \eqref{def_G_P,N}. 
Finally, $X_{p,\textup{fin.supp.}}$ and $X_{p,\textup{fin.supp.},P}$
are the factors defined respectively in 
\eqref{def_X_p_fin_supp} and \eqref{def_X_p_fin_supp_P}.

\subsection{$P_2,P_3$ irreducible}\label{Sec P2 P3 irreducible}
Here we handle the case when $P_2,P_3$ are both  
irreducible. A representative example is the equation $x^2+y^2=2z^2$, in which  
case \eqref{quadratic_forms_a+b=c} gives
$$
P_1(m,n)=m^2+n^2,   \quad P_2(m,n) = m^2-2mn-n^2, \quad  
P_3(m,n)=m^2+2mn-n^2.
$$

To prove the result, given $Q_1 \in \Phi_{s,r,P_1}$, 
$Q_2 \in \Phi_{r,K,P_2}$ and 
$Q_3 \in \Phi_{K,L, P_3}$,
we will choose an integer $v$ depending on $s,r,K,L,Q_1,Q_2, Q_3$, and 
we will average over the grid 
$(Q_{\delta,L} m+1, Q_{\delta,L} n +v)$ in \eqref{T: main a+b=c eq}.

From now on, we assume that $s$ is large enough so that for $p>s$ we have 
$\omega_{P_j}(p)\in \{0,2\}$ for $j\in\{1,2,3\}$ and 
\begin{equation}\label{a+b=c_new_E1: p not dividing}
p\nmid 2ab(a+b)=2abc.    
\end{equation}

\begin{lemma}\label{a+b=c_new_lemma_choice_of_v_1}
Let $\delta>0$, $s,r,K,L\in \N$ with $s<r<K<L/2$,  $P_1,P_2,P_3$ be as in \eqref{quadratic_forms_a+b=c},  and let 
$Q_1 \in \Phi_{s,r,P_1}$, 
$Q_2 \in \Phi_{r,K,P_2}$ and $Q_3 \in \Phi_{K,L,P_3}$. 
Then there is an integer
$v$ with $0\leq v \leq Q_{\delta, L}-1$ such that 
\begin{enumerate}[(i)]
    \item \label{a+b=c_new_E1: lemma_choice_of_v_1} 
    $(Q_{\delta, L},P_1(1,v))=Q_1$,
    $\frac{P_1(1,v)}{Q_1} \equiv Q_1^{-1} \pmod{{Q_s}}$,
    \item \label{a+b=c_new_E2: lemma_choice_of_v_1} 
    $(Q_{\delta, L},P_2(1,v))=Q_2$,
    $\frac{P_2(1,v)}{Q_2} \equiv Q_2^{-1} \pmod{{Q_s}}$,
    \item \label{a+b=c_new_E3: lemma_choice_of_v_1} 
    $(Q_{\delta, L},P_3(1,v))=Q_3$,
    $\frac{P_3(1,v)}{Q_3} \equiv Q_3^{-1} \pmod{{Q_s}}$.
\end{enumerate}
\end{lemma}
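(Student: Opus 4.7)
The plan is to follow the template of \cref{lemma_choice_of_v_1} and \cref{lemma_choice_of_v_1_a=c_P_2_irred}, but now apply Hensel's lemma to all three irreducible forms $P_1,P_2,P_3$ simultaneously and combine the resulting local constraints via the Chinese Remainder Theorem. Concretely, for each $j\in\{1,2,3\}$ and each prime $p$ in the range of $Q_j$ with $\omega_{P_j}(p)>0$, the polynomial $g_j(x):=P_j(1,x)-p^{\theta_p(Q_j)}$ reduces modulo $p$ to $P_j(1,x)$, which has two distinct simple roots (using that $p>s$ and hence $p\nmid 2abc$, so the discriminants are nonzero mod $p$); Hensel's lemma lifts one such root to an integer $\zeta_j=\zeta_j(p,Q_j)$ with $g_j(\zeta_j)\equiv 0\pmod{p^{\theta_p(Q_j)+1}}$.

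I would then use CRT to select $v$ with $0\leq v\leq Q_{\delta,L}-1$ satisfying: $v\equiv 0\pmod{p^{2s}}$ for every $p\leq s$; $v\equiv \zeta_1(p,Q_1)\pmod{p^{\theta_p(Q_1)+1}}$ at primes $s<p\leq r$ with $\omega_{P_1}(p)>0$ and $v\equiv 0\pmod p$ at the remaining primes in $(s,r]$; and analogously for $\zeta_2$ at primes $r<p\leq K$ and $\zeta_3$ at primes $K<p\leq L$. Since the three prime ranges $(s,r]$, $(r,K]$, $(K,L]$ are disjoint, the local conditions are compatible, and the high divisibility of $Q_{\delta,L}$ ensures that a representative $v$ exists within the prescribed interval.

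Verification of (i)--(iii) proceeds prime by prime. For $p\leq s$, the choice $v\equiv 0\pmod{p^{2s}}$ gives $P_j(1,v)\equiv P_j(1,0)=1\pmod{p^{2s}}$ for each $j$, yielding $p\nmid P_j(1,v)$ and, after assembling across all $p\leq s$, the congruence $P_j(1,v)\equiv 1\pmod{Q_s}$, from which $\frac{P_j(1,v)}{Q_j}\equiv Q_j^{-1}\pmod{Q_s}$ follows. At the Hensel primes for $P_j$ one gets $p^{\theta_p(Q_j)}\parallel P_j(1,v)$ directly from the construction, while at the remaining ``non-dividing'' primes $P_j(1,v)\equiv P_j(1,0)=1\pmod p$.

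The main obstacle is ruling out accidental collisions between the three Hensel constructions: a lift designed for $P_j$ could in principle force some $P_i$, $i\neq j$, to be divisible by the same prime. Here I would exploit the identities $P_1(1,v)+P_2(1,v)=2-2bv$, $P_1(1,v)+P_3(1,v)=2+2av$, and $P_3(1,v)-P_2(1,v)=2cv$. If $p\mid P_1(1,v)$ and $p\mid P_2(1,v)$, then $bv\equiv 1\pmod p$, so $abv^2\equiv a/b$ and $P_1(1,v)\equiv(a+b)/b\equiv 0$, forcing $p\mid c$; the pair $(P_1,P_3)$ similarly forces $p\mid c$; and $(P_2,P_3)$ yields $p\mid v$, whence $P_2(1,v)\equiv 1\pmod p$, contradicting $p\mid P_2(1,v)$. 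Each of these is excluded by \eqref{a+b=c_new_E1: p not dividing}, which is where the standing hypothesis that $s$ is sufficiently large is used in an essential way. This closes the verification of (i)--(iii).
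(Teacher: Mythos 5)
Your proof is correct and takes essentially the same route as the paper: Hensel lifting of roots of $P_j(1,x)-p^{\theta_p(Q_j)}$ at the primes supporting $Q_j$, a CRT choice of $v$ that vanishes modulo $p^{2s}$ at primes $p\leq s$ and modulo $p$ at the remaining primes in each range, and exclusion of cross-divisibility using the identities for $P_1+P_2$, $P_1+P_3$, $P_3-P_2$ together with the standing assumption $p\nmid 2abc$. Your explicit handling of the $(P_2,P_3)$ collision spells out the part the paper leaves implicit when it declares the proofs of (ii) and (iii) identical, but the argument is the same.
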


\begin{proof}
Let $p\in \P$ with $s<p \leq r$ and $\omega_{P_1}(p)=2$, and consider the 
polynomial $g_1(x):=P_1(1,x)-p^{\theta_p(Q_1)}$. 
Then $\textup{mod}\: p$ the polynomial
becomes $g_1(x)=P_1(1,x)$, and it has two simple roots
$z_1, z_1'$ with $z_1 \not \equiv z_1' \pmod p$. Using  
Hensel's lemma, we may lift $z_1$ to a unique root 
$\zeta_1 = \zeta_1 (p, Q_1)$ of $g_1 \mod {p^{\theta_p(Q_1)+1}}$.

Analogously, for $p\in \P$ with $r<p \leq K$ and $\omega_{P_2}(p)=2$, 
we may find $\zeta_2 = \zeta_2 (p, Q_2)$ so that if 
$g_2(x):=P_2(1,x)-p^{\theta_p(Q_2)}$, then $g_2(\zeta_2)\equiv 0 
\pmod {p^{\theta_p(Q_2)+1}}$, and for 
$p\in \P$ with $K<p \leq L$ and $\omega_{P_3}(p)=2$, 
we may find $\zeta_3 = \zeta_3 (p, Q_3)$ so that if 
$g_3(x)=P_3(1,x)-p^{\theta_p(Q_3)}$, then $g_3(\zeta_3)\equiv 0 
\pmod {p^{\theta_p(Q_3)+1}}$.

Using the Chinese Remainder theorem we may then choose $v$ with 
$0\leq v \leq Q_{\delta,L}-1$
such that 
\begin{itemize}
     \item $v\equiv 0 \pmod {p^{2s}}$ for   $p\leq s$,
    \item $v\equiv 0$ $\pmod {p}$ for $s< p\leq r$ with 
    $\omega_{P_1}(p)=0$,
    \item $v\equiv \zeta_1(p,Q_1) \pmod {p^{\theta_p(Q_1)+1}}$
    for $s< p\leq r$ with    $\omega_{P_1}(p)=2$,
    \item $v\equiv 0$ $\pmod {p}$ for $r< p\leq K$ with 
    $\omega_{P_2}(p)=0$,
    \item $v\equiv \zeta_2(p,Q_2) \pmod {p^{\theta_p(Q_2)+1}}$ for 
    $r<p\leq K$ with $\omega_{P_2}(p)=2$,
    \item $v\equiv 0 \pmod {p}$ for 
    $K<p\leq L$ with $\omega_{P_3}(p)=0$,
    \item $v\equiv \zeta_3(p,Q_3) \pmod {p^{\theta_p(Q_3)+1}}$ for 
    $K<p\leq L$ with $\omega_{P_3}(p)=2$.
\end{itemize}  
Recall that $P_1(1,v)=1+abv^2$. For $p\leq s$, $P_1(1,v)\equiv 1 \pmod p$, 
so $(p, P_1(1,v))=1$. For $s<p\leq r$ with $\omega_{P_1}(p)=0$, 
$P_1(1,v)\equiv 1 \pmod p$, so $(p, P_1(1,v))=1$.
For $s<p\leq r$ with $\omega_{P_1}(p)=2$, 
$P_1(1,v)\equiv P_1(1,\zeta_1)\equiv p^{\theta_p(Q_1)}  
\pmod {p^{\theta_p(Q_1)+1}}$, so $p^{\theta_p(Q_1)}\parallel P_1(1,v)$.

For $r<p\leq K$ with $\omega_{P_2}(p)=0$, 
$P_1(1,v)\equiv 1 \pmod p$, so $(p, P_1(1,v))=1$.
For $r<p\leq K$ with $\omega_{P_2}(p)=2$, suppose that $P_1(1,v)\equiv 0 
\pmod p$. 
We know that $P_2(1,v)\equiv 0 \pmod p$, so adding we get that
$p\mid P_1(1,v)+P_2(1,v)=2(1-bv)$. Since $p>r$ we have $p\neq 2$, and
we infer 
that $p\mid (1-bv)$, which implies that $v\equiv b^{-1} \pmod p$. This 
implies that $P_2(1,v)\equiv -1 -ab^{-1} \pmod p$, and since 
$P_2(1,v) \equiv 0 \pmod p$, we obtain that $a+b \equiv 0 \pmod p$, which 
contradicts \eqref{a+b=c_new_E1: p not dividing}. Therefore, 
again we have $(p, P_1(1,v))=1$.

For $K<p\leq L$ with $\omega_{P_3}(p)=0$, 
$P_1(1,v)\equiv 1 \pmod p$, so $(p, P_1(1,v))=1$.
For $K<p\leq L$ with $\omega_{P_3}(p)=2$, suppose that 
$P_1(1,v)\equiv 0 \pmod p$. 
We know that $P_3(1,v)\equiv 0 \pmod p$, so adding we get that
$p\mid P_1(1,v)+P_3(1,v)=2(1+av)$. Since $p>r$ we have $p\neq 2$, and
we infer 
that $p\mid (1+av)$, which implies that $v\equiv -a^{-1} \pmod p$. This 
implies that $P_3(1,v)\equiv -1 -a^{-1}b \pmod p$, and since 
$P_3(1,v) \equiv 0 \pmod p$, we obtain that $a+b \equiv 0 \pmod p$, again
contradicting \eqref{a+b=c_new_E1: p not dividing}. Therefore, 
we have $(p, P_1(1,v))=1$. Combining all of the above we get that 
$(Q_{\delta, L}, P_1(1,v))=Q_1$. For $p\leq s$, 
$P_1(1,v) \equiv 1 \pmod {p^{2s}}$, so $P_1(1,v) \equiv 1 \pmod {Q_s}$,
and since $(Q_s, Q_1)=1$, we have that 
$\frac{P_1(1,v)}{Q_1} \equiv Q_1^{-1} \pmod{{Q_s}}$. This proves 
\eqref{a+b=c_new_E1: lemma_choice_of_v_1}. The proofs of 
\eqref{a+b=c_new_E2: lemma_choice_of_v_1}, 
\eqref{a+b=c_new_E3: lemma_choice_of_v_1} are identical, so
they are omitted.
\end{proof}

For the rest of \cref{Sec P2 P3 irreducible}, we suppress 
the dependence of $v$ on the parameters 
$s,r,K,L,Q_1,Q_2, Q_3$, and we simply write $v$
instead of $v(s,r,K,L,Q_1,Q_2, Q_3)$. 

Given $\delta>0$, $s,r, K, L, N\in \N$ with 
$s<r<K<L<N $, $Q_1 \in \Phi_{s,r,P_1}$, $Q_2\in \Phi_{r,K,P_2}$, 
$Q_3\in \Phi_{K,L,P_3}$ and $m,n\in \N$, for $j\in \{1,2,3\}$, let 
\begin{equation}\label{a+b=c_new_R_i def}
    R_j (\delta,s, r, K, L, N, Q_1, Q_2, Q_3,m,n):= 
\frac{P_j(Q_{\delta,L} m +1, Q_{\delta,L} n +v )}{Q_j},
\end{equation}
where $v$ is as in \cref{a+b=c_new_lemma_choice_of_v_1}.
Then for $j\in \{1,2,3\}$ we have
\begin{equation}\label{a+b=c_new_eq_extr_1} 
    P_j(Q_{\delta,L} m+1, Q_{\delta,L} n +v)= Q_j 
    R_j (\delta,s, r, K, L, N, Q_1, Q_2, Q_3,m,n).  
\end{equation}
Sometimes in the notation we suppress the dependence of $R_j$ on 
the parameters 
$\delta, s, r$, $K, L, N, Q_1, Q_2, Q_3$, and we only write $R_j(m,n)$.

If $m>2\gamma n$, then since 
$0\leq v \leq Q_{\delta,L}-1$, we have that $Qm+1 > \gamma (Qn+v)$, 
so $P_j(Qm+1, Qn+v)>0$ for $j=2,3$, while $P_1(Qm+1, Qn+v)>0$ for all
$m,n \in \N$. For each $\delta\in(0,1/2)$, let 
\begin{equation}\label{a+b=c_new_S_delta_a=c_P_2_reduc}
    S_{\delta}:=\{(m,n)\in \N^2 : m> 2 \gamma n, \: 
    |(P_j(m,n))^i -1|\leq \delta \, \text{ for }
    j=1,2,3\}.
\end{equation}

It follows from \cref{Cor S_d dense}  that $S_{\delta}$ has 
positive upper density.
\cref{T: main a+b=c} follows from 
the next result.

\begin{proposition}\label{a+b=c_new_P: main a=c P_2 reducible}
Let $(X,\CX,\mu,T_n)$  be a pretentious multiplicative action, let 
$F\in L^{\infty}(\mu)$ with $F\geq 0$, 
let $P_1,P_2,P_3$ be as in 
\eqref{quadratic_forms_a+b=c}, and suppose that $P_2,P_3$ are irreducible. 
Also, let $S_{\delta}$ be as in \eqref{a+b=c_new_S_delta_a=c_P_2_reduc} and $v=v(\delta,s, r,K,L,Q_1,Q_2,Q_3)$ be as in 
\cref{a+b=c_new_lemma_choice_of_v_1}.
Then 
    \begin{multline}\label{a+b=c_new_main_term_1}
    \suplim_{\delta, s,r,K,L,N}\:
    \cesE_{Q_{1}\in \Phi_{s,r,P_1}} \cesE_{Q_{2}\in \Phi_{r,K,P_2}}
    \cesE_{Q_{3}\in \Phi_{K,L,P_3}}
    \frac{1}{\overline{\textup{d}}(S_{\delta})} \cesE_{(m,n) \in [N]^2} 
    \1_{S_{\delta}}(m,n)\cdot  \\
     \int_{X} F\cdot \prod_{j=1} ^3
    T_{P_j(Q_{\delta,L} m+1, Q_{\delta,L} n +v)} F 
    \, d \mu  \geq \Big(\int_X F \, d \mu\Big)^4.
    \end{multline}
\end{proposition}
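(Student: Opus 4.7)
The plan is to closely follow the three-step decomposition used in the proof of \cref{P: main a=c P_2 reducible}, adapted to the fact that now all three quadratic forms $P_1,P_2,P_3$ are irreducible. First, using \eqref{a+b=c_new_eq_extr_1}, I rewrite the inner integral in \eqref{a+b=c_new_main_term_1} in terms of the reduced polynomials $R_j$. The first preparatory step is to establish an analog of \cref{L: main a=c P_2 reducible}: averaging over $Q_i$'s shows that $T_{Q_j R_j(m,n)} F$ asymptotically depends only on $Q_j$ (not on $Q_i$ for $i\neq j$). Since each $P_j$ is irreducible, this is done by applying the Cauchy--Schwarz inequality, Fatou's lemma, and \eqref{spec_eq_2} to reduce to a pointwise statement for $f\in\M_p$, and then invoking \cref{quad_conc} together with \cref{uniform_in_Q} and the congruence properties in \cref{a+b=c_new_lemma_choice_of_v_1}. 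The factors $Q_{\delta,L}^{2it}$ and $P_j(m,n)^{it}$ appearing from the concentration estimate cancel when comparing two instances, and $\exp(G_{P_j,N}(f,L))\to \exp(-2\D^2(f,\chi\cdot n^{it};L,\infty))$ which is a constant independent of the other $Q_i$'s.

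Next, using this reduction I pass to a product of three single-$Q_j$ averages (with the other $Q_i$'s fixed at some $Q_{i,\cdot}$), and handle each average separately via orthogonal decomposition. For each $j\in\{1,2,3\}$, write $F=F_{1,j}+F_{2,j}$ with $F_{1,j}:=\E(F\mid \X_{p,\textup{fin.supp.},P_j})$ and $F_{2,j}\perp F_{1,j}$. The vanishing of the $F_{2,j}$-contribution proceeds exactly as in \eqref{aux_eq_5} of \cref{Sec : a=c_P_2_reducible}: for $f\in\M_p\setminus\M_{p,\textup{fin.supp.},P_j}$ with $f\sim \chi\cdot n^{it}$, \cref{quad_conc} yields
$$
\cesE_{Q_j}\cesE_{(m,n)\in [N]^2} f(Q_j R_j(m,n))
\ \ll_{P_j}\ |\exp(G_{P_j,N}(f,L))|\cdot\big|\cesE_{Q_j} f(Q_j)\,\overline{\chi(Q_j)}\,Q_j^{-it}\big|+\oh(1),
$$
and the asymptotic invariance of $\Phi_{s,r,P_1}$, $\Phi_{r,K,P_2}$, $\Phi_{K,L,P_3}$ under dilation by any prime $p$ in the relevant range with $\omega_{P_j}(p)>0$ and $f(p)\neq\chi(p)p^{it}$ forces the multiplicative average to vanish in the appropriate limit.

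Dually, for $f\in\M_{p,\textup{fin.supp.},P_j}$ with $f\sim\chi\cdot n^{it}$ and $f(p)=\chi(p)p^{it}$ for all but finitely many $p$ with $\omega_{P_j}(p)>0$, the concentration estimate collapses (as in \eqref{eq_useful_9}) to
$$
\big|P_j(m,n)^{it}\,Q_{\delta,L}^{2it}-1\big|+\oh(1),
$$
which is $\ll_t \delta$ on $S_\delta$ by \eqref{Q delta L def} and \eqref{a+b=c_new_S_delta_a=c_P_2_reduc}. Hence the corresponding $F_{1,j}$-average concentrates around $F_{1,j}$. Iterating this replacement three times (once per $j$) reduces the left-hand side of \eqref{a+b=c_new_main_term_1} to
$$
\int_X F\cdot \E(F\mid \X_{p,\textup{fin.supp.},P_1})\cdot \E(F\mid \X_{p,\textup{fin.supp.},P_2})\cdot \E(F\mid \X_{p,\textup{fin.supp.},P_3})\,d\mu \cdot \suplim_{\delta,s,r,K,L,N}\frac{\cesE_{(m,n)\in[N]^2}\1_{S_\delta}(m,n)}{\overline{\textup{d}}(S_\delta)},
$$
and \cref{chu-lemma} bounds this from below by $(\int F\,d\mu)^4$, while the last factor is $\geq 1$ by definition of $\overline{\textup{d}}(S_\delta)$ (for the appropriate $N\to\infty$ subsequence).

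The main technical obstacle is verifying the analog of \cref{L: main a=c P_2 reducible} in the present setting: because all three $P_j$ are irreducible binary quadratic forms, one must apply \cref{quad_conc} three times with carefully chosen $a,b$ parameters so that the hypotheses $q_\chi\mid Q/c$ and $\prod_{p\leq L}p\mid Q/c$ from \cref{quad_conc} are satisfied simultaneously with the disjoint-prime-range structure of $Q_1,Q_2,Q_3$. This is precisely what the congruence conditions in \cref{a+b=c_new_lemma_choice_of_v_1} guarantee: each $P_j(1,v)/Q_j\equiv Q_j^{-1}\pmod{Q_s}$ ensures the residue $P_j(1,v)/Q_j$ is coprime to the small-prime factors, and the construction $\theta_p(Q_j)\parallel P_j(1,v)$ extracts exactly the $Q_j$-part from $P_j(Q_{\delta,L}m+1,Q_{\delta,L}n+v)$.
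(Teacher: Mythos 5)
Your proposal is correct and follows essentially the same route as the paper: reduce via \eqref{a+b=c_new_eq_extr_1} and an analog of \cref{L: main a=c P_2 reducible} to fixed choices of the other $Q_i$'s, then for each $j$ decompose $F$ using the factor $\X_{p,\textup{fin.supp.},P_j}$, kill the orthogonal part by \cref{quad_conc} plus the asymptotic dilation invariance of $\Phi_{s,r,P_1},\Phi_{r,K,P_2},\Phi_{K,L,P_3}$, obtain concentration of the projected part on $S_\delta$, and finish with \cref{chu-lemma}. The only cosmetic difference is that the paper applies these steps sequentially inside one integral rather than as a product of three separate averages, and your remark that $\exp(G_{P_j,N}(f,L))$ converges to a real constant should more precisely refer to its modulus $\exp(\Re G_{P_j,N})$ (the phases cancel in the pairwise comparison), but this does not affect the argument.
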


It remains to prove \cref{a+b=c_new_P: main a=c P_2 reducible}. 
For that we need the following lemma.

\begin{lemma}\label{a+b=c_new_L: main a=c P_2 reducible}
Let $(X, \X, \mu, T_n)$, $F$  and 
$R_j$, $j\in \{1,2,3\}$,  be as in \eqref{a+b=c_new_R_i def}.
Then  for every $\delta>0$, all sufficiently large $s\in \N$, and all $r,K,\in \N$ with $s<r<K$,  we have 
\begin{align}
& \lim_{L, N} \, 
\max_{Q_1 \in \Phi_{s,r,P_1}, \: Q_2, Q_{2}' \in \Phi_{r,K,P_2}, \:Q_3, Q_{3}' \in \Phi_{K,L,P_3} } 
 \cesE_{(m,n) \in [N]^2} 
  \label{a+b=c_new_L: main a=c P_2 reducible eq 1}\\ 
& \hspace{10mm} \lVert 
 T_{R_1(\delta,s, r, K, L, N, Q_1, Q_2, Q_3,m,n)} F - 
 T_{R_1(\delta,s, r, K, L, N, Q_1, Q_2 ', Q_3 ',m,n)} F
\rVert _{L^2(\mu)}=0, \nonumber \\
& \lim_{L, N} \, 
\max_{Q_1, Q_1 ' \in \Phi_{s,r,P_1}, \: Q_2 \in \Phi_{r,K,P_2}, \:Q_3, Q_{3}' \in \Phi_{K,L,P_3} }
 \cesE_{(m,n) \in [N]^2} 
   \label{a+b=c_new_L: main a=c P_2 reducible eq 2} \\
& \hspace{10mm}  \lVert 
T_{R_2(\delta,s, r, K, L, N, Q_1, Q_2, Q_3,m,n)} F - 
T_{R_2(\delta,s, r, K, L, N, Q_1 ', Q_2, Q_3 ',m,n)} F
\rVert _{L^2(\mu)}=0, \nonumber  \\
& \lim_{L, N}\, 
\max_{Q_1, Q_1 ' \in \Phi_{s,r,P_1}, \: Q_2, Q_2 ' \in \Phi_{r,K,P_2},
\:Q_3 
\in \Phi_{K,L, P_3} } 
 \cesE_{(m,n) \in [N]^2} 
   \label{a+b=c_new_L: main a=c P_2 reducible eq 3} \\
& \hspace{10mm} \lVert 
T_{R_3(\delta,s, r, K, L, N, Q_1, Q_2, Q_3,m,n)} F - 
T_{R_3(\delta,s, r, K, L, N, Q_1 ', Q_2 ', Q_3,m,n)} F
\rVert _{L^2(\mu)}=0. \nonumber 
\end{align}
\end{lemma}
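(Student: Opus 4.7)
The plan is to adapt closely the strategy in the proof of \cref{L: main a=c P_2 reducible}, exploiting the fact that all three forms $P_1, P_2, P_3$ in \eqref{quadratic_forms_a+b=c} are now irreducible, so that we can uniformly apply the quadratic concentration estimate \cref{quad_conc} to each of the three terms (and thereby avoid any appeal to \cref{bilinear_lemma}, which in the reducible case was needed to handle $P_1 = 2amn$). Since \eqref{a+b=c_new_L: main a=c P_2 reducible eq 1}, \eqref{a+b=c_new_L: main a=c P_2 reducible eq 2} and \eqref{a+b=c_new_L: main a=c P_2 reducible eq 3} are completely symmetric in the roles of $P_1, P_2, P_3$, I would give full details only for \eqref{a+b=c_new_L: main a=c P_2 reducible eq 1} and indicate that the other two assertions follow by the identical argument using parts \eqref{a+b=c_new_E2: lemma_choice_of_v_1} and \eqref{a+b=c_new_E3: lemma_choice_of_v_1} of \cref{a+b=c_new_lemma_choice_of_v_1} respectively.

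We may assume $\lVert F \rVert_{L^\infty(\mu)} \leq 1$. The first step is to apply the spectral identity \eqref{spec_eq_2} to rewrite the $L^2(\mu)$-norm of the difference as a $\sigma_F$-integral, and then use Cauchy--Schwarz together with Fatou's lemma, exactly as in the proof of \cref{L: main a=c P_2 reducible}, to reduce the claim to the pointwise assertion that for every $f \in \M_p$,
$$
\lim_{L,N}\, \max_{Q_1, Q_2, Q_2', Q_3, Q_3'}\, \cesE_{(m,n)\in[N]^2}\, \bigl| f(R_1(\ldots,Q_2,Q_3,\ldots))\,\overline{f(R_1(\ldots,Q_2',Q_3',\ldots))} - 1\bigr| = 0.
$$
Writing $f \sim \chi \cdot n^{it}$ with conductor $q_\chi$, I would take $s$ large enough that $q_\chi \mid Q_s$, and $r, K, L$ large enough that $\prod_{p\leq L} p$ divides $Q_{\delta,L}/Q_1$. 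By \cref{a+b=c_new_lemma_choice_of_v_1}\eqref{a+b=c_new_E1: lemma_choice_of_v_1}, both values $v = v(\delta,s,r,K,L,Q_1,Q_2,Q_3)$ and $v' = v(\delta,s,r,K,L,Q_1,Q_2',Q_3')$ satisfy $(Q_{\delta,L}, P_1(1,v)) = Q_1$ and $P_1(1,v)/Q_1 \equiv Q_1^{-1} \pmod{Q_s}$, so in particular $\chi(P_1(1,v)/Q_1) = \chi(Q_1^{-1})$ is independent of the choice of $v$. Applying \cref{quad_conc} together with \cref{uniform_in_Q} to each of $f(R_1(\ldots,Q_2,Q_3,\ldots))$ and $f(R_1(\ldots,Q_2',Q_3',\ldots))$ with the form $P_1$, the leading approximations---sharing the same $\chi$-factor, the same $(P_1(Q_{\delta,L}m, Q_{\delta,L}n)/Q_1)^{it}$ factor, and the same $\exp(G_{P_1,N}(f,L))$ factor---cancel against one another in the product, leaving a bound of order $\D(f,\chi\cdot n^{it};L,\infty)+L^{-1/2}$, which vanishes as $N\to\infty$ followed by $L\to\infty$.

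The argument is essentially routine once \cref{a+b=c_new_lemma_choice_of_v_1} is in hand; the only thing requiring care is the uniformity of the hypotheses of \cref{quad_conc} over the relevant ranges of $Q_j$'s, which is exactly the content of \cref{uniform_in_Q}. I do not anticipate any genuinely new conceptual obstacle beyond what is already addressed in \cref{Sec : a=c_P_2_reducible}; if anything, the present case is cleaner since the common irreducibility of all three forms removes the need to separately treat a reducible factor via \cref{bilinear_lemma}. The proofs of \eqref{a+b=c_new_L: main a=c P_2 reducible eq 2} and \eqref{a+b=c_new_L: main a=c P_2 reducible eq 3} require taking $r, K$ respectively large enough so that the divisibility of $Q_{\delta,L}/Q_2$ (resp.\ $Q_{\delta,L}/Q_3$) by $q_\chi \prod_{p\leq L}p$ is guaranteed for all relevant $Q_2 \in \Phi_{r,K,P_2}$ (resp.\ $Q_3 \in \Phi_{K,L,P_3}$), but the structure of the argument is unchanged.
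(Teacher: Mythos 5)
Your proposal is correct and follows essentially the same route as the paper, which itself omits the proof of this lemma and simply refers back to the argument for \cref{L: main a=c P_2 reducible} with \cref{a+b=c_new_lemma_choice_of_v_1} used in place of \cref{lemma_choice_of_v_1}; your observation that only \cref{quad_conc} is needed here (no appeal to \cref{bilinear_lemma}), since $P_1,P_2,P_3$ are all irreducible in this case, is accurate. The one point to phrase carefully is that the shared factor $\exp(G_{P_1,N}(f,L))$ is not unimodular, so the two main terms multiply to $\exp(2\Re(G_{P_1,N}(f,L)))$ rather than exactly $1$; its distance from $1$ is $O(\D^2(f,\chi\cdot n^{it};L,\infty))$, which is absorbed into your stated bound and vanishes as $N\to\infty$ and then $L\to\infty$, exactly as in \eqref{eq_conc_3}.
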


\begin{remark}
Similarly to \cref{L: main a=c P_2 reducible}, 
\cref{a+b=c_new_L: main a=c P_2 reducible}
essentially says that 
in the integral in \eqref{a+b=c_new_main_term_1},
the first term is independent of $Q_2, Q_3$, the second term 
is independent of $Q_1, Q_3$ and the third term 
is independent of $Q_1, Q_2$.
\end{remark}

\begin{proof}
We omit the proof, as it closely follows the proof of \cref{L: main 
	a=c P_2 reducible}.  The essential step is to apply the 
	concentration estimates \cref{linear_conc} and \cref{quad_conc}, with 
	the only change being the use of \cref{a+b=c_new_lemma_choice_of_v_1} 
	in place of \cref{lemma_choice_of_v_1}.	
\end{proof}

We are now ready to prove \cref{a+b=c_new_P: main a=c P_2 reducible}. 

\begin{proof}[Proof of \cref{a+b=c_new_P: main a=c P_2 reducible}]
The argument is similar to that in \cref{P: main a=c P_2 reducible}, 
but there are some non-trivial differences. For completeness, we present it 
here with some details omitted.
	
We can assume  that 
$\lVert F\rVert_{L^{\infty}(\mu)}\leq 1$. In the proof, it is always 
implicit that $s<r<K<L<N$, and that each of the previous variables is 
sufficiently large depending on the smaller ones.

In view of \eqref{a+b=c_new_eq_extr_1}, the quantity in 
$\suplim_{\delta, s,r,K,L,N}$ on the left-hand side in 
\eqref{a+b=c_new_main_term_1} equals 
\begin{multline*}
\cesE_{Q_{1}\in \Phi_{s,r,P_1}} \cesE_{Q_{2}\in \Phi_{r,K,P_2}}
    \cesE_{Q_{3}\in \Phi_{K,L,P_3}} \frac{1}{\overline{\textup{d}}(S_{\delta})} \cesE_{(m,n) \in [N]^2} 
    \1_{S_{\delta}}(m,n) \cdot \\
    \int_{X}  F \cdot T_{Q_1 R_1(m,n)} F \cdot 
    T_{Q_2 R_2(m,n)} F \cdot 
    T_{{Q}_3 R_3(m,n)} F
    \, d \mu.  
\end{multline*}
For each $s,r,K,L\in \N$, choose any $Q_{1,s,r}\in \Phi_{s,r,P_1}$,  
$Q_{2, r, K} \in \Phi_{r,K,P_2}$ and $Q_{3,K,L}\in \Phi_{K,L,P_3}$. 
Using \cref{a+b=c_new_L: main a=c P_2 reducible}, one sees that in order to prove
\eqref{a+b=c_new_main_term_1}, it suffices to prove that 
\begin{multline}\label{a+b=c_new_main_term_3}
\suplim_{\delta,s,r, K, L,N}
    \int_{X}  F \cdot  \frac{1}{\overline{\textup{d}}(S_{\delta})} 
    \cesE_{(m,n) \in [N]^2} 
    \1_{S_{\delta}}(m,n) \cdot 
     \Big( \cesE_{Q_{1}\in \Phi_{s,r,P_1}} \\ T_{Q_1 R_1(\delta,s, r, K, L, N, Q_1, Q_{2,r,K} , Q_{3,K,L} ,m,n)} F \Big)
    \cdot \Big( \cesE_{Q_{2}\in \Phi_{r,K,P_2}}  
     T_{Q_2 R_2(\delta,s, r, K, L, N, Q_{1,s,r}, 
    Q_2 , Q_{3,K,L},m,n)} F \Big) \cdot \\ \Big( \cesE_{Q_{3}\in \Phi_{K,L,P_3}}
    T_{{Q}_3 R_3(\delta,s, r, K, L, N, Q_{1,s,r}, Q_{2,r,K} , Q_3 ,m,n)} F \Big)
    d \mu \geq \Big(\int_X F \, d \mu\Big)^4 .
\end{multline}

As in the proof of \cref{P: main a=c P_2 reducible} we  handle each of the three averages separately.

\smallskip 

{\bf Step 1 (Dealing with the average over $Q_1$).}
Write $F=F_1 + F_2$, where $F_1:=\E(F | {\X}_{p, \textup{fin.supp.},P_1})$ 
and $F_2 \perp F_1$. 
Then the main term in \eqref{a+b=c_new_main_term_3} is equal to 
\begin{multline}\label{a+b=c_new_main_term_4}
    \sum_{j\in \{1,2\}}\int_{X}  F \cdot  \frac{1}{\overline{\textup{d}}(S_{\delta})} 
    \cesE_{(m,n) \in [N]^2} 
    \1_{S_{\delta}}(m,n) \cdot 
    \Big( \cesE_{Q_{1}\in \Phi_{s,r,P_1}} \\ T_{Q_1 R_1(\delta,s, r, K, L, N, Q_1, Q_{2,r,K} , Q_{3,K,L} ,m,n)} F_j \Big)
    \cdot \Big( \cesE_{Q_{2}\in \Phi_{r,K,P_2}}  
     T_{Q_2 R_2(\delta,s, r, K, L, N, Q_{1,s,r}, 
    Q_2 , Q_{3,K,L},m,n)} F \Big) \cdot \\ \Big( \cesE_{Q_{3}\in \Phi_{K,L,P_3}}
    T_{{Q}_3 R_3(\delta,s, r, K, L, N, Q_{1,s,r}, Q_{2,r,K} , Q_3 ,m,n)} F \Big)
    d \mu.
\end{multline}

The term corresponding to $j=2$ in \eqref{a+b=c_new_main_term_4} 
is bounded by 
\begin{equation}\label{a+b=c_new_aux_quant_1}
\frac{1}{\overline{\textup{d}}(S_{\delta})} 
    \cesE_{(m,n) \in [N]^2}  \big \lVert 
\cesE_{Q_{1}\in \Phi_{s,r,P_1}} 
T_{Q_1 R_1(\delta,s, r, K, L, N, Q_1, Q_{2,r,K} , Q_{3,K,L} ,m,n)} F_2
\big \rVert_{L^2(\mu)}.
\end{equation}
We will show that the quantity in \eqref{a+b=c_new_aux_quant_1} goes to 
$0$ as $N\to \infty, L \to \infty, K \to \infty, r \to \infty, 
s \to \infty$. 
Since $\sigma_{F_2}$ is supported on
$\M_p \setminus \M_{p, \textup{fin.supp.},P_1}$, using \eqref{spec_eq_1} and 
applying Fatou's lemma multiple times, one sees 
that it suffices to prove that for every 
$f\in \M_p \setminus \M_{p, \textup{fin. supp.},P_1}$,  we have 
\begin{equation*}
\lim _{s,r,K,L,N} \:
\cesE_{(m,n) \in [N]^2} \big | 
\cesE_{Q_{1}\in \Phi_{s,r,P_1}} f({Q_1 
R_1(\delta,s, r, K, L, N, Q_1, Q_{2,r,K} , Q_{3,K,L} ,m,n)}) \big |=0.
\end{equation*}
Let $f\in \M_p \setminus \M_{p, \textup{fin.supp.},P_1}$, 
let $\chi, t$ so that $f\sim \chi \cdot n^{it}$,
and suppose that $s,L$ are large enough so that the conductor $q_{\chi}$ of 
$\chi$ divides $Q_s$ and $\frac{Q_{\delta, L}}{Q_1}$ for all 
$Q_1 \in \Phi_{s,r,P_1}$. 
Using \cref{a+b=c_new_lemma_choice_of_v_1} \eqref{a+b=c_new_E1: lemma_choice_of_v_1} and
applying \cref{quad_conc} and \cref{uniform_in_Q} we deduce that
\begin{align}\label{a+b=c_new_eq_useful_1}
&\limsup_{N\to \infty} \cesE_{(m,n) \in [N]^2}  \big | 
\cesE_{Q_{1}\in \Phi_{s,r,P_1}} f({Q_1 
R_1(\delta,s, r, K, L, N, Q_1, Q_{2,r,K} , Q_{3,K,L} ,m,n)})
\big | \ll_{P_1} \\
&\limsup_{N\to \infty} \cesE_{(m,n) \in [N]^2}  \big | 
\cesE_{Q_{1}\in \Phi_{s,r,P_1}}  f(Q_1)\, \overline{\chi(Q_1)} \, 
Q_1 ^{-it}  \, Q_{\delta,L}^{2it} \, (P_{1}(m,n))^{it} \, 
\exp(G_{P_1,N}(f,L)) \big |   \nonumber \\
& \hspace{20mm} + \D(f, \chi \cdot n^{it}; L, \infty)
+L^{-1/2}
= \nonumber \\
& \limsup_{N\to \infty} |\exp(G_{P_1,N}(f,L))|\cdot 
\big| \cesE_{Q_1 \in \Phi_{s,r,P_1}} 
f(Q_1) \, \overline{\chi(Q_1)}  \, Q_1 ^{-it}
\big|+ \D(f, \chi \cdot n^{it}; L, \infty)
+L^{-1/2}. \nonumber
\end{align}
From $f\sim \chi \cdot n^{it}$ we deduce that   
$\lim_{L\to \infty} \lim_{N\to \infty} |\exp(G_{P_1,N}(f,L))|=1$.
Since $f \in \M_p \setminus \M_{p, \textup{fin.supp.},P_1}$, for each 
$s$ there is 
$p>s$ such that $\omega_{P_1}(p)>0$ and $f(p) \neq \chi(p) p^{it}$. Using 
that $\Phi_{s,r,P_1}$ is asymptotically
invariant under dilation by $p$ as $r\to \infty$ we have 
\begin{multline*}
    {f(p)}\, \overline{\chi(p)}\, p^{-it} 
    \cesE_{Q_{1} \in \Phi_{s,r,P_1}} f(Q_{1})\, 
    \overline{\chi(Q_{1})}\, Q_{1} ^{-it}
    =
    \cesE_{Q_{1} \in p \Phi_{s,r,P_1}} f(Q_{1})\,  \overline{\chi(Q_{1})}\, 
    Q_{1} ^{-it}
    \\ =
    \cesE_{Q_{1} \in  \Phi_{s,r,P_1}} f(Q_{1}) \, \overline{\chi(Q_{1})}\, 
    Q_{1} ^{-it}
   + \oh_{r\to \infty}(1).
\end{multline*}
Using $f(p) \neq \chi(p)\, p^{it}$, the above implies that 
\begin{equation*}
   \lim_{r\to \infty} \cesE_{Q_{1} \in  \Phi_{s,r,P_1}} f(Q_{1}) 
   \cdot \overline{\chi(Q_{1})} \cdot Q_{1} ^{-it} = 0.
\end{equation*}
Combining this with \eqref{a+b=c_new_eq_useful_1} we infer that 
\begin{equation}\label{a+b=c_new_aux_eq_3}
    \lim_{s,r,K,L,N}
    \cesE_{(m,n) \in [N]^2}  \big \lVert 
\cesE_{Q_{1}\in \Phi_{s,r,P_1}} 
T_{Q_1 R_1(\delta,s, r, K, L, N, Q_1, Q_{2,r,K} , Q_{3,K,L} ,m,n)} F_2
\big \rVert_{L^2(\mu)}=0.
\end{equation}
On the other hand, for the term corresponding to $j=1$ in 
\eqref{a+b=c_new_main_term_4}, we claim that 
\begin{multline}\label{a+b=c_new_eq_useful_2}
    \lim_{\delta,s, r, K, L, N}
    \max_{Q_1 \in \Phi_{s,r,P_1}} 
    \frac{1}{\overline{\textup{d}}(S_{\delta})}\cdot  
    \cesE_{(m,n) \in [N]^2} \1_{S_{\delta}}(m,n)\cdot \\ 
    \lVert 
    T_{Q_1 R_1(\delta,s, r, K, L, N, Q_1, Q_{2,r,K} , Q_{3,K,L} ,m,n)}
    F_1 -F_1 \rVert_{L^2(\mu)} =0.
\end{multline}
To see this, let $f\in M_{p, \textup{fin.supp.},P_1}$, and
let $\chi, t$ and $s_0 \in \N$ so that $f\sim \chi \cdot n^{it}$ and
$f(p)=\chi(p)\, p^{it}$ for all primes $p>s_0$ with $\omega_{P_1}(p)>0$. 
Also, suppose that $s,L$ are sufficiently
large so that $s>s_0$ and the conductor $q_{\chi}$ of $\chi$ 
divides $Q_s$ and $\frac{Q_{\delta, L}}{Q_1}$  for every
$Q_{1} \in \Phi_{s,r,P_1}$.
Using \cref{a+b=c_new_lemma_choice_of_v_1} \eqref{a+b=c_new_E1: lemma_choice_of_v_1} and
applying \cref{quad_conc} and \cref{uniform_in_Q}, we get
\begin{align}\label{last day in the office eq 2}
    &\limsup_{N\to \infty} \max_{Q_1 \in \Phi_{s,r,P_1}} 
    \frac{1}{\overline{\textup{d}}(S_{\delta})} 
    \cesE_{(m,n) \in [N]^2} \1_{S_{\delta}}(m,n) \cdot \\ 
    & \hspace{20mm}\big|f(Q_1) 
    f(R_1(\delta,s, r, K, L, N, Q_{1}, Q_{2,r,K} , Q_{3,K,L} ,m,n)) -1\big|
   \ll_{P_1} \nonumber \\
   & \limsup_{N\to \infty} \max_{Q_1 \in \Phi_{s,r,P_1}} 
    \frac{1}{\overline{\textup{d}}(S_{\delta})} 
    \cesE_{(m,n) \in [N]^2} \1_{S_{\delta}}(m,n) \cdot
    |f(Q_{1})\,  \overline{\chi(Q_{1})} \,  Q_{1}^{-it} \cdot \nonumber \\  
      & \hspace{20mm} Q_{\delta, L}^{2it} \, (P_1(m,n))^{it} \,   
      \exp(G_{P_1,N}(f,L)) -1| + \D(f, \chi \cdot n^{it}, L, \infty) + 
      L^{-1/2}. \nonumber
\end{align}
For $p>s_0$ with $\omega_{P_1}(p)>0$,  we have 
$f(p)=\chi(p)\, p^{it}$, which implies that 
$f(Q_{1})= \chi(Q_{1})\,  Q_{1}^{it}$, and since $L>s_0$, we have
$G_{P_1,N}(f,L)=0$. Therefore, the last quantity in 
\eqref{last day in the office eq 2} 
is equal to 
\begin{equation*}
 \limsup_{N\to \infty}
    \frac{1}{\overline{\textup{d}}(S_{\delta})} 
    \cesE_{(m,n) \in [N]^2} \1_{S_{\delta}}(m,n) \cdot
    | Q_{\delta, L}^{2it}  P_1(m,n)^{it}   
       -1| + \oh_{L\to \infty}(1) \ll_{t} \delta + \oh_{L\to \infty}(1),
\end{equation*}
where for the last estimate we used 
\eqref{Q delta L def} and \eqref{a+b=c_new_S_delta_a=c_P_2_reduc}. 
We infer that 
for $f\in M_{p, \textup{fin.supp.},P_1}$, 
\begin{multline*}
    \lim_{\delta,s, r, K, L, N}
    \max_{Q_1 \in \Phi_{s,r,P_1}} 
    \frac{1}{\overline{\textup{d}}(S_{\delta})} 
    \cesE_{(m,n) \in [N]^2} \1_{S_{\delta}}(m,n) \cdot \\ 
    \big|f(Q_1) 
    f(R_1(\delta, s, r, K, L, N, Q_{1}, Q_{2,r,K} , Q_{3,K,L} ,m,n)) 
    -1\big|=0,
\end{multline*}
and since the spectral measure of $F_1$ is supported on 
$M_{p, \textup{fin.supp.},P_1}$, \eqref{a+b=c_new_eq_useful_2} follows.

Using \eqref{a+b=c_new_main_term_4}, \eqref{a+b=c_new_aux_eq_3}, and \eqref{a+b=c_new_eq_useful_2}, we see 
that in order to prove \eqref{a+b=c_new_main_term_3}, 
it suffices to prove that
\begin{multline}\label{a+b=c_new_main_term_5}
\suplim_{\delta,s,r, K, L,N}
    \int_{X}  F \cdot  F_1 \cdot \frac{1}{\overline{\textup{d}}(S_{\delta})} 
    \cesE_{(m,n) \in [N]^2} 
    \1_{S_{\delta}}(m,n) \cdot 
     \\ \Big( \cesE_{Q_{2}\in \Phi_{r,K,P_2}}  
     T_{Q_2 R_2(\delta,s, r, K, L, N, Q_{1,s,r}, 
    Q_2 , Q_{3,K,L},m,n)} F \Big) \cdot \\ \Big( \cesE_{Q_{3}\in \Phi_{K,L,P_3}}
    T_{{Q}_3 R_3(\delta,s, r, K, L, N, Q_{1,s,r}, Q_{2,r,K} , Q_3 ,m,n)} F \Big)
    d \mu \geq \Big(\int_X F \, d \mu\Big)^4.
\end{multline}

{\bf Steps 2 and 3 (Dealing with the averages over $Q_2$ and $Q_3$).}
Writing $F=F_3 + F_4$, where $F_3:=\E(F | \X_{p, \textup{fin.supp.},P_2})$ 
and $F_4 \perp F_3$, and utilizing a similar argument we get 
\begin{equation}\label{a+b=c_new_aux_eq_4}
    \lim_{s,r,K,L,N}
    \cesE_{(m,n) \in [N]^2}  \big \lVert 
\cesE_{Q_{2}\in \Phi_{r,K,P_2}} 
T_{Q_2 R_2(\delta,s, r, K, L, N, Q_{1,s,r}, Q_{2} , Q_{3,K,L} ,m,n)} F_4
\big \rVert_{L^2(\mu)}=0
\end{equation}
and
\begin{multline}\label{a+b=c_new_eq_useful_4}
    \lim_{\delta,s, r, K, L, N}
    \max_{Q_2 \in \Phi_{r,K,P_2}} 
    \frac{1}{\overline{\textup{d}}(S_{\delta})}\cdot  
    \cesE_{(m,n) \in [N]^2} \1_{S_{\delta}}(m,n)\cdot\\ 
    \lVert 
    T_{Q_2 R_2(\delta,s, r, K, L, N, Q_{1,s,r}, Q_{2} , Q_{3,K,L} ,m,n)} F_3
    -F_3 \rVert_{L^2(\mu)} =0.
\end{multline}

Similarly, if we write $F=F_5 + F_6$, where 
$F_5:=\E(F | \X_{p, \textup{fin.supp.}, P_3})$
and $F_6 \perp F_5$, then 
\begin{equation}\label{a+b=c_new_aux_eq_5}
    \lim_{s,r,K,L,N}
    \cesE_{(m,n) \in [N]^2}  \big \lVert 
\cesE_{Q_{3}\in \Phi_{K,L,P_3}} 
T_{Q_3 R_3(\delta,s, r, K, L, N, Q_{1,s,r}, Q_{2,r,K} , Q_{3} ,m,n)} F_6
\big \rVert_{L^2(\mu)}=0
\end{equation}
for every $\delta>0$, and
\begin{multline}\label{a+b=c_new_eq_useful_8}
    \lim_{\delta,s, r, K, L, N}
    \max_{Q_3 \in \Phi_{K,L,P_3}} 
    \frac{1}{\overline{\textup{d}}(S_{\delta})} 
    \cesE_{(m,n) \in [N]^2} \1_{S_{\delta}}(m,n)\cdot \\ 
    \lVert 
    T_{Q_3 R_3(\delta,s, r, K, L, N, Q_{1,s,r}, Q_{2,r,K} , Q_{3} ,m,n)} F_5
    -F_5 \rVert_{L^2(\mu)} =0.
\end{multline}

Using \eqref{a+b=c_new_aux_eq_4}-\eqref{a+b=c_new_eq_useful_8} and arguing as in Step 1,  we see 
that in order to prove \eqref{a+b=c_new_main_term_5}, 
it suffices to prove that
\begin{equation}\label{a+b=c_new_main_term_10}
\suplim_{\delta,s,r, K, L,N}
    \int_{X}  F \cdot   F_1 \cdot F_3 \cdot F_5 \, d\mu \cdot \frac{1}{\overline{\textup{d}}(S_{\delta})} 
    \cesE_{(m,n) \in [N]^2} 
    \1_{S_{\delta}}(m,n) 
    \geq \Big(\int_X F \, d \mu\Big)^4 .
\end{equation}

{\bf  Step 4 (Finishing the argument).}
Note that  
$$
    \int_X F\cdot F_1 \cdot F_3 \cdot F_5 \, d \mu  =
    \int_X F\cdot \E(F | {\X}_{p, \textup{fin.supp.},P_1})  \cdot 
    \E(F | \X_{p, \textup{fin.supp.},P_2})
\cdot  \E(F | \X_{p, \textup{fin.supp.},P_3}) \, d \mu, 
$$
and \cref{chu-lemma} implies that the last quantity is greater or equal to $(\int F\, d\mu)^4$. This establishes \eqref{a+b=c_new_main_term_10} and finishes the proof of the proposition. 
\end{proof}

\subsection{$P_2$ reducible, $P_3$ irreducible or $P_2$ irreducible, $P_3$ reducible}\label{Sec : a+b=c_P_2_reducible P_3 irreducible}
Those cases are symmetric, so we may assume $P_2$ is reducible and 
$P_3$ is irreducible. A representative example is $3x^2+y^2=4z^2$, in 
which case \eqref{quadratic_forms_a+b=c} gives
$$
P_1(m,n)=m^2+3n^2, \quad P_2(m,n)=(m-3n)(m+n), \quad P_3(m,n)=m^2+6mn-3n^2.
$$
Since the proof is very similar to when both $P_2$ and $P_3$ are 
irreducible, we only highlight the changes needed in the proof from 
\cref{Sec P2 P3 irreducible} to obtain the result in this case.nted in \cref{Sec P2 P3 irreducible} to obtain the result in this 
case.

Since $P_2$ is reducible, it is easy to see that there are 
$\lambda_1, \lambda_2 \in \Z\setminus \{0\}$ such that 
$\lambda_1+ \lambda_2 =-2b$, $\lambda_1\lambda_2 =-ab$, and 
\begin{equation}\label{E:P2reducible}
P_2(m,n)=(m+\lambda_1 n)(m+\lambda_2 n).
\end{equation}
 Since $\lambda_1 \lambda_2<0$,
we have $\lambda_1 \neq \lambda_2$.
From now on we assume that $s$ is large enough so that
for $p>s$ we have $\omega_{P_1}(p), \omega_{P_3}(p) \in \{0,2\}$, and 
\begin{equation}\label{a+b=c_new_E2: p not dividing}
p\nmid 2 ab(a+b)\lambda_1 \lambda_2 (\lambda_1 - \lambda_2)
(ab + \lambda_1 ^2).    
\end{equation}

Given $Q_1 \in \Phi_{s,r,P_1}$, $Q_2 \in \Phi_{r,K}$ and 
$Q_3 \in \Phi_{K,L, P_3}$,
we will choose an integer $v$ depending on $r,K,L,Q_1,Q_2, Q_3$, and 
we will average over the grid 
$(Q_{\delta,L} m+1, Q_{\delta,L} n +v)$ in \eqref{T: main a+b=c eq}.

\begin{lemma}\label{a+b=c_new_lemma_choice_of_v_1_a=c_P_3_irred}
Let $\delta>0$, $s,r,K,L\in \N$ with $s<r<K<L/2$, 
 $P_1,P_2,P_3$ be as in 
\eqref{quadratic_forms_a+b=c}, with $P_2$ as in \eqref{E:P2reducible}, and let $Q_1 \in 
\Phi_{s,r,P_1}$, $Q_2 \in \Phi_{r,K}$ and $Q_3 \in \Phi_{K,L,P_3}$. 
Then there is an integer
$v$ with $0\leq v \leq Q_{\delta, L}-1$ such that 
\begin{enumerate}[(i)]
    \item $(Q_{\delta, L}, P_1(1,v))=Q_1$, 
    $\frac{P_1(1,v)}{Q_1}\equiv Q_1 ^{-1} \pmod {Q_s}$,
    \item $(Q_{\delta, L},1+\lambda_1 v)= Q_2$, 
    $\frac{1+\lambda_1 v}{Q_2} \equiv Q_2 ^{-1} 
    \pmod{{Q_s}}$,
     \item $(Q_{\delta, L},1+\lambda_2 v)= 1$, 
    $1+\lambda_2 v \equiv 1    \pmod{{Q_s}}$,
    \item $(Q_{\delta, L},P_3(1,v))=Q_3$, 
    $\frac{P_3(1,v)}{Q_3} \equiv Q_3^{-1} \pmod{{Q_s}}$.
\end{enumerate}
\end{lemma}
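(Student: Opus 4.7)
The proof will proceed exactly along the lines of \cref{a+b=c_new_lemma_choice_of_v_1} and \cref{lemma_choice_of_v_1_a=c_P_2_irred}: we will prescribe $v$ one prime power at a time and reconstruct it via the Chinese Remainder Theorem, with Hensel's lemma supplying the necessary lifts of roots of $P_1(1,x)$ and $P_3(1,x)$. Since $P_1$ and $P_3$ are irreducible with $\omega_{P_j}(p)\in\{0,2\}$ for $p>s$ ($j=1,3$), for each prime $p$ with $s<p\le r$ and $\omega_{P_1}(p)=2$ we obtain a simple root $z_1$ of $P_1(1,\cdot)$ modulo $p$ and Hensel-lift it to a root $\zeta_1(p,Q_1)$ of $P_1(1,x)-p^{\theta_p(Q_1)}$ modulo $p^{\theta_p(Q_1)+1}$; analogously, for $K<p\le L$ with $\omega_{P_3}(p)=2$ we obtain $\zeta_3(p,Q_3)$.

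Using CRT, we then choose $v$ with $0\le v\le Q_{\delta,L}-1$ determined by
\begin{itemize}
\item $v\equiv 0\pmod{p^{2s}}$ for $p\le s$;
\item $v\equiv 0\pmod{p}$ for $s<p\le r$ with $\omega_{P_1}(p)=0$;
\item $v\equiv \zeta_1(p,Q_1)\pmod{p^{\theta_p(Q_1)+1}}$ for $s<p\le r$ with $\omega_{P_1}(p)=2$;
\item $v\equiv -\lambda_1^{-1}+\lambda_1^{-1}p^{\theta_p(Q_2)}\pmod{p^{\theta_p(Q_2)+1}}$ for $r<p\le K$;
\item $v\equiv 0\pmod{p}$ for $K<p\le L$ with $\omega_{P_3}(p)=0$;
\item $v\equiv \zeta_3(p,Q_3)\pmod{p^{\theta_p(Q_3)+1}}$ for $K<p\le L$ with $\omega_{P_3}(p)=2$.
\end{itemize}
The prescription at primes $p\le s$ forces $v\equiv 0\pmod{Q_s}$, which automatically yields the second congruence in each of the four items, since $P_1(1,0)=P_3(1,0)=1+\lambda_j\cdot 0=1$. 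The only nontrivial step is to verify that at every prime $p\le L$, the $p$-adic valuation of each of the four quantities $P_1(1,v)$, $1+\lambda_1 v$, $1+\lambda_2 v$, $P_3(1,v)$ is exactly what is prescribed by $Q_1$, $Q_2$, $1$, $Q_3$, respectively.

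This verification is a case analysis over the ranges of $p$, mirroring \cref{lemma_choice_of_v_1_a=c_P_2_irred}. Three checks are new and use \eqref{a+b=c_new_E2: p not dividing} in an essential way: first, at a prime $r<p\le K$ we have $1+\lambda_2 v\equiv 1-\lambda_2\lambda_1^{-1}=\lambda_1^{-1}(\lambda_1-\lambda_2)\pmod p$, which is a unit because $p\nmid(\lambda_1-\lambda_2)\lambda_1$; second, at the same primes $P_1(1,v)\equiv 1+ab\lambda_1^{-2}=\lambda_1^{-2}(\lambda_1^2+ab)$ is a unit because $p\nmid (\lambda_1^2+ab)$; third, at a prime $s<p\le r$ with $\omega_{P_1}(p)=2$, using $z_1^2\equiv -(ab)^{-1}\pmod p$ one computes $P_3(1,z_1)\equiv 2(1+az_1)\pmod p$, and if $p\mid P_3(1,z_1)$ then $z_1\equiv -a^{-1}$, forcing $1+ab\cdot a^{-2}=(a+b)/a\equiv 0\pmod p$, contradicting $p\nmid(a+b)$. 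The analogous check at $K<p\le L$ with $\omega_{P_3}(p)=2$ is symmetric, and all remaining local calculations are either immediate from the defining congruence of $v$ or identical to the arguments already carried out in \cref{a+b=c_new_lemma_choice_of_v_1}. The Hensel lifts guarantee that at the primes where a given form vanishes, it vanishes to the exact prescribed order.

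The main obstacle, and the reason the statement of the lemma has been tailored this way, is the third bullet in the display above: the choice $v\equiv -\lambda_1^{-1}\pmod p$ was forced by the need to introduce the factor $p^{\theta_p(Q_2)}$ into $1+\lambda_1 v$ without disturbing the coprimalities imposed at the same primes on the other three forms. That this can be done simultaneously for the ``conflict'' forms $1+\lambda_2 v$ and $P_1(1,v)$ is exactly what condition \eqref{a+b=c_new_E2: p not dividing} ensures, by excluding the finitely many bad primes $p\mid 2ab(a+b)\lambda_1\lambda_2(\lambda_1-\lambda_2)(\lambda_1^2+ab)$.
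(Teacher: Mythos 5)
Your proposal is correct and follows essentially the same route as the paper: the identical Hensel lifts for $P_1$ and $P_3$, the same CRT prescription (your residue $-\lambda_1^{-1}+\lambda_1^{-1}p^{\theta_p(Q_2)}$ is exactly the paper's $\lambda_1^{-1}(p^{\theta_p(Q_2)}-1)$), and the same prime-by-prime verification, which the paper omits as "very similar to \cref{a+b=c_new_lemma_choice_of_v_1}" but which you spell out, correctly isolating the new checks that rely on \eqref{a+b=c_new_E2: p not dividing}.
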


\begin{proof}
Let $p\in \P$ with $s<p \leq r$ and $\omega_{P_2}(p)=2$, and consider the 
polynomial $g_1(x):=P_1(1,x)-p^{\theta_p(Q_1)}$. 
Then $\textup{mod}\: p$ the polynomial
becomes $g_1(x)=P_1(1,x)$, and it has two simple roots
$z_1, z_1'$ with $z_1 \not \equiv z_1' \pmod p$. Using  
Hensel's lemma, we may lift $z_1$ to a unique root 
$\zeta_1 = \zeta_1 (p, Q_1)$ of $g_1 \mod {p^{\theta_p(Q_1)+1}}$.

Analogously, for $p\in \P$ with $K<p \leq L$ and $\omega_{P_3}(p)=2$, 
we may find $\zeta_3 = \zeta_3 (p, Q_3)$ so that if 
$g_3(x)=P_3(1,x)-p^{\theta_p(Q_3)}$, then $g_3(\zeta_3)\equiv 0 
\pmod {p^{\theta_p(Q_3)+1}}$.

Using the Chinese Remainder theorem we may then choose $v$ with 
$0\leq v \leq Q_{\delta,L}-1$
such that 
\begin{itemize}
     \item $v\equiv 0 \pmod {p^{2s}}$ for $p\leq s$,
     \item $v\equiv 0 \pmod {p}$ for $s<p \leq r$ with $\omega_{P_1}(p)=0$,
     \item $v\equiv \zeta_1(p, Q_1)$,
    $\pmod {p^{\theta_p(Q_1)+1}}$ for $s< p\leq r$ with 
    $\omega_{P_1}(p)=2$,
    \item $v\equiv \lambda_1 ^{-1} (p^{\theta_p(Q_2)} -1 )\pmod{p^{\theta_p(Q_2)+1}}$ for $r<p\leq K$,
    \item $v\equiv 0 \pmod {p}  $ for $K<p\leq L$ with $\omega_{P_3}(p)=0$,
    \item $v\equiv \zeta_3(p,Q_3) \pmod {p^{\theta_p(Q_3)+1}}$ for 
    $K<p\leq L$ with $\omega_{P_3}(p)=2$.
\end{itemize}
The rest of the proof is very similar to the proof of 
\cref{a+b=c_new_lemma_choice_of_v_1}, so it is omitted.
\end{proof}

As in \cref{Sec P2 P3 irreducible}, for each $\delta\in(0,1/2)$, let 
\begin{equation}\label{a+b=c_new_S_delta_a=c_P_3_irreduc}
    S_{\delta}:=\{(m,n)\in \N^2 : m> 2\gamma n, \: 
    |(P_j(m,n))^i -1|\leq \delta\, \text{ for }
    j=1,2,3\},
\end{equation}
and recall that in view of \cref{Cor S_d dense}, $S_{\delta}$ has 
positive upper density. Similarly to \cref{Sec P2 P3 irreducible}, 
for $P_2$ reducible and $P_3$ irreducible, 
\cref{T: main a+b=c} follows from the following 
result.

\begin{proposition}\label{a+b=c_new_P: main a=c P_3 irreducible}
Let $(X,\CX,\mu,T_n)$  be a pretentious multiplicative action, let 
$F\in L^{\infty}(\mu)$ with $F\geq 0$, 
let $P_1,P_2,P_3$ be as in 
\eqref{quadratic_forms_a+b=c},  and suppose that 
$P_2$ is reducible and $P_3$ is irreducible.
Also, let $S_{\delta}$ be as in \eqref{a+b=c_new_S_delta_a=c_P_3_irreduc} and  $v=v(\delta,s, r, K,L,Q_1,Q_2,Q_3)$ be  as in 
\cref{a+b=c_new_lemma_choice_of_v_1_a=c_P_3_irred}.
Then 
    \begin{multline}\label{a+b=c_new_E: P: main a=c P_3 irreducible}
    \suplim_{\delta,s, r,K,L,N}\:
    \cesE_{Q_{1}\in \Phi_{s,r,P_1}} \cesE_{Q_{2}\in \Phi_{r,K}}
    \cesE_{Q_{3}\in \Phi_{K,L,P_3}}
    \frac{1}{\overline{\textup{d}}(S_{\delta})} \cesE_{(m,n) \in [N]^2} 
    \1_{S_{\delta}}(m,n)\cdot  \\
     \int_{X} F\cdot \prod_{j=1} ^3
    T_{P_j(Q_{\delta,L} m+1, Q_{\delta,L} n +v)} F 
    \, d \mu \geq \Big(\int_X F \, d \mu\Big)^4.
    \end{multline}
\end{proposition}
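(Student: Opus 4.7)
My plan is to follow the blueprint of the proof of Proposition~\ref{a+b=c_new_P: main a=c P_2 reducible} (the case where both $P_2,P_3$ are irreducible), modifying only the steps that involve $P_2$. The key observation is that since $P_2$ is reducible, we may write $P_2(m,n) = (m+\lambda_1 n)(m+\lambda_2 n)$ for some distinct $\lambda_1,\lambda_2\in\Z\setminus\{0\}$, and Lemma~\ref{a+b=c_new_lemma_choice_of_v_1_a=c_P_3_irred} arranges that $Q_2 \parallel (1+\lambda_1 v)$ while $(1+\lambda_2 v, Q_{\delta,L})=1$. Consequently, in the decomposition
$$
P_2(Q_{\delta,L}m+1, Q_{\delta,L}n + v) = Q_2\cdot R_2(m,n),
$$
the quotient $R_2(m,n)$ factors as a product of two linear forms in $(m,n)$, so the treatment of the term $T_{Q_2 R_2(m,n)}F$ will rely on Proposition~\ref{linear_conc} (combined with \cref{bilinear_lemma}) rather than Proposition~\ref{quad_conc}. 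Also, since $P_2$ is reducible one verifies that $\M_{p,\textup{fin.supp.},P_2} = \M_{p,\textup{fin.supp.}}$, so the appropriate factor for the $Q_2$-average is $\X_{p,\textup{fin.supp.}}$, matching the strategy of \cref{Sec : a=c_P_2_reducible}.

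First, I would establish an analog of \cref{a+b=c_new_L: main a=c P_2 reducible} showing that, in the integral on the left-hand side of \eqref{a+b=c_new_E: P: main a=c P_3 irreducible}, the term indexed by $j$ is asymptotically independent of the $Q_\ell$'s with $\ell\neq j$. For $j=1$ and $j=3$ the argument is identical to that in \cref{a+b=c_new_L: main a=c P_2 reducible} via \cref{quad_conc} and \cref{uniform_in_Q}. For $j=2$, I would apply \cref{linear_conc} separately to each of the two linear factors composing $R_2$, using the congruence information in parts (ii)--(iii) of \cref{a+b=c_new_lemma_choice_of_v_1_a=c_P_3_irred}, together with \cref{bilinear_lemma} to lift the one-dimensional concentration estimate to the two-variable average in $(m,n)$. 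The resulting error depends only on $\D(f,\chi\cdot n^{it};L,\infty)+L^{-1/2}$ plus the contribution of $\exp(4\Re F_N(f,L))$, and tends to zero as $L\to\infty$.

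With the three-way decoupling established, I would then execute the same four-step argument as in the proof of \cref{a+b=c_new_P: main a=c P_2 reducible}. In Step~1, I split $F = F_1 + F_2$ with $F_1 = \E(F\mid \X_{p,\textup{fin.supp.},P_1})$ and use \cref{quad_conc} with the asymptotic invariance of $\Phi_{s,r,P_1}$ to show that the $Q_1$-average annihilates $F_2$, while concentrating $F_1$ around itself (using that on $S_\delta$ the factor $P_1(m,n)^{it}Q_{\delta,L}^{2it}$ is $\delta$-close to $1$). In Step~2, I split $F = F_3 + F_4$ with $F_3 = \E(F\mid \X_{p,\textup{fin.supp.}})$, and I use \cref{linear_conc} on each linear factor together with the asymptotic invariance of $\Phi_{r,K}$ under dilation by any $p\in(r,K]$ to see that the $Q_2$-average annihilates $F_4$, while concentrating $F_3$ around itself (again using that $P_2(m,n)^{it}Q_{\delta,L}^{2it}\approx 1$ on $S_\delta$). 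Step~3 is handled identically to the proof of \cref{a+b=c_new_P: main a=c P_2 reducible} via \cref{quad_conc}, producing $F_5 = \E(F\mid \X_{p,\textup{fin.supp.},P_3})$. Finally, Step~4 applies \cref{chu-lemma} with three sub-$\sigma$-algebras, giving
$$
\int_X F\cdot F_1\cdot F_3\cdot F_5\, d\mu \ \geq\ \Big(\int_X F\, d\mu\Big)^4,
$$
combined with the fact that $S_\delta$ has positive upper density (\cref{Cor S_d dense}), which lets us control $\tfrac{1}{\overline{\textup{d}}(S_\delta)}\cesE_{(m,n)\in[N]^2}\1_{S_\delta}(m,n)$ in the $\suplim$.

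The main obstacle will be Step~2: handling the reducible $P_2$ requires decomposing $R_2$ into its two linear factors and tracking the congruence conditions from \cref{a+b=c_new_lemma_choice_of_v_1_a=c_P_3_irred} through two separate applications of \cref{linear_conc}, then combining them via \cref{bilinear_lemma}. In particular, the congruence $(1+\lambda_2 v, Q_{\delta,L})=1$ with $1+\lambda_2 v\equiv 1\pmod{Q_s}$ must be used to ensure that the ``trivial'' linear factor $Q_{\delta,L}(m+\lambda_2 n)+(1+\lambda_2 v)$ concentrates around itself without introducing spurious Dirichlet character phases, mirroring the role of the analogous factor in \cref{Sec : a=c_P_2_reducible}. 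Once this is managed, the remainder of the argument is essentially a verbatim repetition of the proof of \cref{a+b=c_new_P: main a=c P_2 reducible}.
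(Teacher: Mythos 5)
Your proposal is correct and follows essentially the same route as the paper, which only sketches this case by noting that the argument of Section~\ref{Sec P2 P3 irreducible} carries over verbatim except that the term $T_{P_2(Q_{\delta,L}m+1,Q_{\delta,L}n+v)}F$ is handled via \cref{linear_conc} and \cref{bilinear_lemma} (applied to the two linear factors singled out by parts (ii)--(iii) of \cref{a+b=c_new_lemma_choice_of_v_1_a=c_P_3_irred}) instead of \cref{quad_conc}. Your choice of $\X_{p,\textup{fin.supp.}}$ for the $Q_2$-average, paired with $\Phi_{r,K}$, is exactly the adaptation the paper intends (mirroring Step~2 of \cref{Sec : a=c_P_2_reducible}), so no further changes are needed.
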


To prove \cref{a+b=c_new_P: main a=c P_3 irreducible}, one must first 
establish an analog of \cref{a+b=c_new_L: main a=c P_2 reducible}, 
which states that in the integral \eqref{a+b=c_new_E: P: main a=c P_3 
	irreducible}, the first term is independent of $Q_2, Q_3$, the second 
term is independent of $Q_1, Q_3$, and the third term is independent 
of $Q_1, Q_2$. This result is then used to prove \cref{a+b=c_new_P: 
	main a=c P_3 irreducible}. Both proofs are very similar to those in 
\cref{Sec P2 P3 irreducible}, with the only essential change being 
that, to handle the term $T_{P_2(Q_{\delta,L} m+1, Q_{\delta,L} n+v)} 
F$, one must use \cref{linear_conc} and \cref{bilinear_lemma} instead 
of \cref{quad_conc}.

\subsection{$P_2$, $P_3$ reducible}\label{Sec : a+b=c P_2 P_3 reducible}
Finally, we handle the case when both $P_2$ and $P_3$ are reducible.
A representative example is $9x^2+16y^2=25z^2$, in 
which case \eqref{quadratic_forms_a+b=c} gives
$$
P_1(m,n)=m^2+144n^2, \quad P_2(m,n)=(m-36n)(m+4n), \quad P_3(m,n)=(m-6n)(m+24n).
$$
Again, since the proof is very similar to the case where both $P_2$ 
and $P_3$ are irreducible, we only note the modifications needed in 
the proof from \cref{Sec P2 P3 irreducible} to obtain the result in 
this case.

Using that $P_2$, $P_3$ are reducible, we see that there are 
$\lambda_1, \lambda_2, \lambda_3, \lambda_4  \in \Z\setminus \{0\}$
such that 
$\lambda_1+ \lambda_2 =-2b$, $\lambda_1\lambda_2 =-ab$,
$\lambda_3+ \lambda_4 =2a$, $\lambda_1\lambda_2 =-ab$
and 
\begin{equation}\label{E:P23reducible}
P_2(m,n)=(m+\lambda_1 n)(m+\lambda_2 n),\quad 
P_3(m,n)=(m+\lambda_3 n)(m+\lambda_4 n).
\end{equation}
Since $\lambda_1 \lambda_2<0$ and $\lambda_3 \lambda_4<0$,
we have $\lambda_1 \neq \lambda_2$ and $\lambda_3 \neq \lambda_4$.
From now on we assume that $s$ is large enough so that
for $p>s$ we have  $\omega_{P_1}(p) \in \{0,2\}$ and 
\begin{equation}\label{new_a+b=c_new_E2: p not dividing}
p\nmid 2 ab(a+b)\lambda_1 \lambda_2 (\lambda_1 - \lambda_2)
(ab + \lambda_1 ^2)
\lambda_3 \lambda_4 (\lambda_3 - \lambda_4)
(ab + \lambda_3 ^2).    
\end{equation}

Given $Q_1 \in \Phi_{s,r,P_1}$, $Q_2 \in \Phi_{r,K}$ and 
$Q_3 \in \Phi_{K,L}$,
we will choose an integer $v$ depending on $r,K,L,Q_1,Q_2, Q_3$, and 
we will average over the grid 
$(Q_{\delta,L} m+1, Q_{\delta,L} n +v)$ in \eqref{T: main a+b=c eq}.

\begin{lemma}\label{a+b=c_new_lemma_choice_of_v_1_a=c_P_3_red}
Let $\delta>0$, $s,r,K,L\in \N$ with $s<r<K<L/2$, 
  $P_1,P_2,P_3$ be as in 
\eqref{quadratic_forms_a+b=c}, with $P_2,P_3$ as in \eqref{E:P23reducible}, 
and let $Q_1 \in 
\Phi_{s,r,P_1}$, $Q_2 \in \Phi_{r,K}$ and $Q_3 \in \Phi_{K,L}$. 
Then there is an integer
$v$ with $0\leq v \leq Q_{\delta, L}-1$ such that 
\begin{enumerate}[(i)]
    \item $(Q_{\delta, L}, P_1(1,v))=Q_1$, 
    $\frac{P_1(1,v)}{Q_1}\equiv Q_1 ^{-1} \pmod {Q_s}$,
     \item $(Q_{\delta, L},1+\lambda_1 v)= Q_2$, 
    $\frac{1+\lambda_1 v}{Q_2} \equiv Q_2 ^{-1}    \pmod{{Q_s}}$,
    \item $(Q_{\delta, L},1+\lambda_2 v)=1$ and 
    $1+\lambda_2 v \equiv 1 \pmod{{Q_s}}$,
        \item $(Q_{\delta, L}, 1+\lambda_3 v)= Q_3$, 
    $\frac{1+\lambda_3 v}{Q_3} \equiv Q_3 ^{-1}    \pmod{{Q_s}}$,
    \item $(Q_{\delta, L},1+\lambda_4 v)=1$ and 
    $1+\lambda_4 v \equiv 1 \pmod{{Q_s}}$.
\end{enumerate}
\end{lemma}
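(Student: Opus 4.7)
The proof follows the same CRT-based blueprint as in Lemmas \ref{a+b=c_new_lemma_choice_of_v_1} and \ref{a+b=c_new_lemma_choice_of_v_1_a=c_P_3_irred}, except that now both $P_2$ and $P_3$ split linearly, so the factor-divisibility conditions for each of them can be enforced through just \emph{one} of their two linear factors. The plan is to construct $v$ by specifying its residue modulo suitable prime powers in four disjoint prime ranges, $p\le s$, $s<p\le r$, $r<p\le K$, and $K<p\le L$, corresponding respectively to the factors of $Q_s$, $Q_1$, $Q_2$, $Q_3$, and then to check each of the five numbered conditions prime by prime.

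Specifically, I would set: $v\equiv 0\pmod{p^{2s}}$ for $p\le s$; $v\equiv 0\pmod{p}$ for $s<p\le r$ with $\omega_{P_1}(p)=0$; and for $s<p\le r$ with $\omega_{P_1}(p)=2$, use Hensel's lemma (which applies since $p\nmid 2ab$ by \eqref{new_a+b=c_new_E2: p not dividing}) to lift a simple root of $P_1(1,x)\bmod p$ to a root $\zeta_1(p,Q_1)$ of $g_1(x):=P_1(1,x)-p^{\theta_p(Q_1)}$ modulo $p^{\theta_p(Q_1)+1}$ and set $v\equiv\zeta_1(p,Q_1)\pmod{p^{\theta_p(Q_1)+1}}$. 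For $r<p\le K$, choose $v\equiv\lambda_1^{-1}(p^{\theta_p(Q_2)}-1)\pmod{p^{\theta_p(Q_2)+1}}$, which forces $p^{\theta_p(Q_2)}\parallel 1+\lambda_1 v$. For $K<p\le L$, choose $v\equiv\lambda_3^{-1}(p^{\theta_p(Q_3)}-1)\pmod{p^{\theta_p(Q_3)+1}}$, which forces $p^{\theta_p(Q_3)}\parallel 1+\lambda_3 v$. CRT combined with the size bounds on $\Phi_{s,r,P_1}$, $\Phi_{r,K}$, and $\Phi_{K,L}$ yields a unique $v$ with $0\le v\le Q_{\delta,L}-1$.

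The verification of (i)-(v) then proceeds by case analysis on the prime range. The divisibility claims (exact powers of $Q_1,Q_2,Q_3$ dividing $P_1(1,v)$, $1+\lambda_1 v$, $1+\lambda_3 v$) follow by construction from Hensel lifting or the explicit linear choices, while the coprimality claims reduce, outside the ``correct'' range, to verifying that certain small integers are nonzero modulo $p$. For instance, in (ii), to prove $p\nmid 1+\lambda_1 v$ when $K<p\le L$, one computes $1+\lambda_1 v\equiv 1-\lambda_1/\lambda_3\equiv(\lambda_3-\lambda_1)/\lambda_3\pmod p$ and invokes the fact that $\lambda_1\ne\lambda_3$ as integers (easily checked: $\lambda_i$ is a nonzero root of $x^2+2bx-ab$ while $\lambda_j$ (for $j\in\{3,4\}$) is a root of $x^2-2ax-ab$, and equating would force $(a+b)\lambda=0$). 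The congruences modulo $Q_s$ in all five items follow trivially from $v\equiv 0\pmod{Q_s}$, which makes $P_1(1,v)\equiv 1$ and $1+\lambda_i v\equiv 1\pmod{Q_s}$ for every $i$, so dividing by the appropriate $Q_j$ (coprime to $Q_s$) yields $Q_j^{-1}$.

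The only nontrivial technical point is that the ambient non-divisibility condition \eqref{new_a+b=c_new_E2: p not dividing} must be augmented with $p\nmid(\lambda_i-\lambda_j)$ for $i\in\{1,2\}$, $j\in\{3,4\}$, in order to rule out collisions between the two linear factorizations across the $r<p\le K$ and $K<p\le L$ ranges. Since each such $\lambda_i-\lambda_j$ is a fixed nonzero integer depending only on $a,b$, this is absorbed into the hypothesis that $s$ is sufficiently large, without any real cost. Beyond this bookkeeping, no new ideas are required: the argument is a direct adaptation of the proof of Lemma \ref{a+b=c_new_lemma_choice_of_v_1_a=c_P_3_irred}, with the Hensel lift for $P_3$ replaced by the explicit linear recipe analogous to the one used for $P_2$.
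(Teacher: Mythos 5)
Your construction is exactly the paper's: the same CRT system ($v\equiv 0$ modulo $p^{2s}$ for $p\le s$ and modulo $p$ for $s<p\le r$ with $\omega_{P_1}(p)=0$, the Hensel lift $\zeta_1(p,Q_1)$ for $\omega_{P_1}(p)=2$, and $v\equiv\lambda_1^{-1}(p^{\theta_p(Q_2)}-1)$, $v\equiv\lambda_3^{-1}(p^{\theta_p(Q_3)}-1)$ in the ranges $r<p\le K$, $K<p\le L$), followed by the same prime-by-prime verification that the paper leaves to the reader, so the proposal is correct and essentially identical to the paper's proof. Your only deviation is cosmetic: the cross conditions $p\nmid(\lambda_i-\lambda_j)$, $i\in\{1,2\}$, $j\in\{3,4\}$, need not be added to \eqref{new_a+b=c_new_E2: p not dividing}, since $\lambda_i\equiv\lambda_j\pmod p$ would force $2(a+b)\lambda_i\equiv 0\pmod p$ by subtracting the two quadratics, contradicting $p\nmid 2ab(a+b)\lambda_1\lambda_2\lambda_3\lambda_4$; in any case augmenting the condition is harmless.
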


\begin{proof}
For $p\in \P$ with $s<p \leq r$ and $\omega_{P_1}(p)=2$, 
we may use Hensel's lemma to find $\zeta_1 = \zeta_1 (p, Q_1)$ so that if 
$g_1(x):=P_1(1,x)-p^{\theta_p(Q_1)}$, then $g_1(\zeta_1)\equiv 0 
\pmod {p^{\theta_p(Q_1)+1}}$.

Using the Chinese Remainder theorem ,we may then choose $v$ with 
$0\leq v \leq Q_{\delta,L}-1$
such that 
\begin{itemize}
     \item $v\equiv 0 \pmod {p^{2s}}$ for $p\leq s$,
     \item $v\equiv 0 \pmod {p}$ for $s<p \leq r$ with $\omega_{P_1}(p)=0$,
     \item $v\equiv \zeta_1(p, Q_1)$
    $\pmod {p^{\theta_p(Q_1)+1}}$ for $s< p\leq r$ with 
    $\omega_{P_1}(p)=2$,
    \item $v\equiv \lambda_1 ^{-1} (p^{\theta_p(Q_2)} -1 )\pmod{p^{\theta_p(Q_2)+1}}$ for $r<p\leq K$,
    \item $v\equiv \lambda_3 ^{-1} (p^{\theta_p(Q_3)} -1 )\pmod{p^{\theta_p(Q_3)+1}}$ for $K<p\leq L$.
\end{itemize}
The rest of the proof is very similar to the proof of 
\cref{a+b=c_new_lemma_choice_of_v_1}, so it is omitted.
\end{proof}

As in \cref{Sec P2 P3 irreducible}, for each $\delta\in(0,1/2)$, let 
\begin{equation}\label{a+b=c_new_S_delta_a=c_P_3_reduc}
    S_{\delta}:=\{(m,n)\in \N^2 : m> 2\gamma n, \: 
    |(P_j(m,n))^i -1|\leq \delta\, \text{ for }
    j=1,2,3\}.
\end{equation}
The case when $P_2$, $P_3$ are reducible in  
\cref{T: main a+b=c} follows from the following 
result.

\begin{proposition}\label{a+b=c_new_P: main a=c P_3 reducible}
Let $(X,\CX,\mu,T_n)$  be a pretentious multiplicative action, let 
$F\in L^{\infty}(\mu)$ with $F\geq 0$, 
let $P_1,P_2,P_3$ be as in 
\eqref{quadratic_forms_a+b=c},  and suppose that 
$P_2,P_3$ are reducible.
Also, let $S_{\delta}$ be as in \eqref{a+b=c_new_S_delta_a=c_P_3_reduc} and $v=v(\delta, s, r,K,L,Q_1,Q_2,Q_3)$ be  as in 
\cref{a+b=c_new_lemma_choice_of_v_1_a=c_P_3_red}.
Then 
    \begin{multline}\label{a+b=c_new_E: P: main a=c P_3 reducible}
    \suplim_{\delta,s, r,K,L,N}\:
    \cesE_{Q_{1}\in \Phi_{s,r,P_1}} \cesE_{Q_{2}\in \Phi_{r,K}}
    \cesE_{Q_{3}\in \Phi_{K,L}}
    \frac{1}{\overline{\textup{d}}(S_{\delta})} \cesE_{(m,n) \in [N]^2} 
    \1_{S_{\delta}}(m,n)\cdot \\ 
     \int_{X} F\cdot \prod_{j=1} ^3
    T_{P_j(Q_{\delta,L} m+1, Q_{\delta,L} n +v)} F 
    \, d \mu \geq \Big(\int_X F \, d \mu\Big)^4.
    \end{multline}
\end{proposition}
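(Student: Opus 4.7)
The proof follows the same three-step template used in \cref{P: main a=c P_2 reducible} and \cref{a+b=c_new_P: main a=c P_2 reducible}, with the key adjustment that now both $P_2$ and $P_3$ are reducible, so I would use \cref{linear_conc} applied to each of their linear factors (combined with \cref{bilinear_lemma}) in place of \cref{quad_conc} when handling the $Q_2$ and $Q_3$ averages. First, I would set $R_j(m,n):=P_j(Q_{\delta,L}m+1,Q_{\delta,L}n+v)/Q_j$ for $j\in\{1,2,3\}$ (using the $v$ supplied by \cref{a+b=c_new_lemma_choice_of_v_1_a=c_P_3_red}) and establish the appropriate analog of \cref{a+b=c_new_L: main a=c P_2 reducible}, showing that in the integrand of \eqref{a+b=c_new_E: P: main a=c P_3 reducible} the term $T_{R_j(m,n)}F$ is asymptotically independent of $Q_i$ for $i\neq j$. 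For $j=1$ this is identical to Step~1 of \cref{a+b=c_new_L: main a=c P_2 reducible} (invoking \cref{quad_conc} for $P_1$). For $j=2$ I factor
\begin{equation*}
R_2(m,n)=\Big(\tfrac{Q_{\delta,L}}{Q_2}(m+\lambda_1 n)+\tfrac{1+\lambda_1 v}{Q_2}\Big)\bigl(Q_{\delta,L}(m+\lambda_2 n)+1+\lambda_2 v\bigr),
\end{equation*}
and apply \cref{linear_conc} to each linear factor together with \cref{bilinear_lemma}; the congruences $(Q_{\delta,L},1+\lambda_1 v)=Q_2$ and $(Q_{\delta,L},1+\lambda_2 v)=1$ from \cref{a+b=c_new_lemma_choice_of_v_1_a=c_P_3_red} are precisely what is needed. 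The case $j=3$ is completely analogous using $(\lambda_3,\lambda_4)$ and $Q_3$.

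Next I would peel off the three averages one at a time. For the $Q_1$-average I decompose $F=F_1+F_2$ with $F_1:=\E(F\mid \X_{p,\textup{fin.supp.},P_1})$ and show, as in Step~1 of \cref{a+b=c_new_P: main a=c P_2 reducible}, that the $F_2$-contribution vanishes (using that $\Phi_{s,r,P_1}$ is asymptotically invariant under dilation by primes $p>s$ with $\omega_{P_1}(p)>0$ for which $f(p)\neq\chi(p)p^{it}$), while $T_{Q_1 R_1(m,n)}F_1$ concentrates around $F_1$ on $S_\delta$ (the oscillating factor $\exp(G_{P_1,N}(f,L))$ equals $1$ for $L$ large since $f\in \M_{p,\textup{fin.supp.},P_1}$, and $|P_1(m,n)^{it}Q_{\delta,L}^{2it}-1|\ll_t \delta$ on $S_\delta$). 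For the $Q_2$-average I decompose $F=F_3+F_4$ with $F_3:=\E(F\mid \X_{p,\textup{fin.supp.}})$; the vanishing of the $F_4$-contribution now uses that for $f\in \M_p\setminus\M_{p,\textup{fin.supp.}}$ there exists $p>r$ with $f(p)\neq\chi(p)p^{it}$ and that $\Phi_{r,K}$ is asymptotically invariant under dilation by such $p$, while the concentration of $T_{Q_2 R_2(m,n)}F_3$ around $F_3$ uses that for $f\in\M_{p,\textup{fin.supp.}}$ the oscillating factors attached to each linear factor eventually vanish. For the $Q_3$-average I repeat the same argument with $F=F_5+F_6$, $F_5:=\E(F\mid \X_{p,\textup{fin.supp.}})$, using the asymptotic invariance of $\Phi_{K,L}$ under dilation by primes $p>K$.

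After these three replacements, the left-hand side of \eqref{a+b=c_new_E: P: main a=c P_3 reducible} is bounded below by
\begin{equation*}
\int_X F\cdot \E(F\mid \X_{p,\textup{fin.supp.},P_1})\cdot \E(F\mid \X_{p,\textup{fin.supp.}})\cdot \E(F\mid \X_{p,\textup{fin.supp.}})\, d\mu,
\end{equation*}
and applying \cref{chu-lemma} with the three sub-$\sigma$-algebras $\X_{p,\textup{fin.supp.},P_1}$, $\X_{p,\textup{fin.supp.}}$, $\X_{p,\textup{fin.supp.}}$ (the lemma permits repetitions) yields the required lower bound $\bigl(\int_X F\, d\mu\bigr)^4$. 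The principal novelty compared with \cref{a+b=c_new_P: main a=c P_3 irreducible} is that the factor $\X_{p,\textup{fin.supp.}}$ plays the role of $\X_j$ for \emph{both} $j=2$ and $j=3$ (since neither $P_2$ nor $P_3$ is irreducible), but since \cref{chu-lemma} allows repeated sub-$\sigma$-algebras, the final inequality is unaffected. The only genuinely technical point is verifying that the two linear-factor applications of \cref{linear_conc} (combined through \cref{bilinear_lemma}) produce an error term of the correct shape; this is where the precise congruences established in \cref{a+b=c_new_lemma_choice_of_v_1_a=c_P_3_red} are essential, and mirrors the analogous step in the proof of \cref{L: main a=c P_2 reducible}.
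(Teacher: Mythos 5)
Your proposal is correct and follows essentially the same route as the paper, which handles this case by repeating the argument of \cref{Sec P2 P3 irreducible} with \cref{linear_conc} and \cref{bilinear_lemma} applied to the linear factors of $P_2$ and $P_3$ in place of \cref{quad_conc}. In particular, your use of the factor $\X_{p,\textup{fin.supp.}}$ for both the $Q_2$- and $Q_3$-averages (with the asymptotic dilation invariance of $\Phi_{r,K}$ and $\Phi_{K,L}$, and with \cref{chu-lemma} allowing repeated sub-$\sigma$-algebras) matches the treatment of the reducible factor in \cref{Sec : a=c_P_2_reducible}, so no further changes are needed.
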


Again, to prove \cref{a+b=c_new_P: main a=c P_3 reducible}, one must 
first establish an analog of \cref{a+b=c_new_L: main a=c P_2 reducible}, 
and then use that result  to prove \cref{a+b=c_new_P: main a=c P_3 reducible}. 
In both proofs, the only essential difference from those in \cref{Sec 
	P2 P3 irreducible} is that, to handle the terms 
$T_{P_2(Q_{\delta,L} m+1, Q_{\delta,L} n+v)} F$ and 
$T_{P_3(Q_{\delta,L} m+1, Q_{\delta,L} n+v)} F$, one must use 
\cref{linear_conc} and \cref{bilinear_lemma} instead of \cref{quad_conc}.

\bibliographystyle{abbrv}
\bibliography{refs}

\end{document}